\newtheorem{theorem}{Theorem}[section]
\newtheorem{proposition}[theorem]{Proposition}
\newtheorem{corollary}[theorem]{Corollary}
\newtheorem{lemma}[theorem]{Lemma}
\theoremstyle{remark}
\theoremstyle{definition}
\newtheorem{definition}[theorem]{Definition}
\numberwithin{equation}{section}
\def\R{\mathbb R}
\def\C{\mathcal{C}}
\def\L{\mathcal{L}}
\newcommand{\be} {\begin{equation}}
	\newcommand{\ee} {\end{equation}}
\newcommand{\bea} {\begin{eqnarray}}
	\newcommand{\eea} {\end{eqnarray}}
\newcommand{\Bea} {\begin{eqnarray*}}
	\newcommand{\Eea} {\end{eqnarray*}}
\newcommand{\pa} {\partial}
\newcommand{\al} {\alpha}
\newcommand{\ga} {\gamma}
\newcommand{\Ga} {\Gamma}
\newcommand{\om} {\omega}
\newcommand{\la} {\lambda}
\newcommand{\si} {\sigma}
\newcommand{\La} {\Lambda}
\newcommand{\noi} {\noindent}
\newcommand{\vp} {\varphi}
\newcommand{\var} {\varepsilon}
\newcommand{\Om} {\Omega}
\def\sqr#1#2{{\vbox{\hrule height.#2pt
			\hbox{\vrule width.#2pt height#1pt \kern#1pt
				\vrule width.#2pt}
			\hrule height.#2pt}}}
\def\square{\sqr74}
\def\qed{{\unskip\nobreak\hfil\penalty50\hskip1em
		\hbox{}\nobreak\hfil\square \parfillskip=0pt
		\finalhyphendemerits=0 \par\goodbreak \vskip8mm}}
\def\XXint#1#2#3{{\setbox0=\hbox{$#1{#2#3}{\int\limits\limits}$}
		\vcenter{\hbox{$#2#3$}}\kern-.5\wd0}}
\title[Global Existence of Reaction Diffusion systems on Evolving Domain]{Global Existence of Solutions to Reaction Diffusion Systems with Mass Transport Type Boundary Conditions on an Evolving Domain}
\author{Vandana Sharma} 
\address{Department of Mathematics,
Indian Institute of Technology Jodhpur, 
Rajasthan, India}
 \email{vandanas@iitj.ac.in}
  \author{Jyotshana V. Prajapat}
\address{	Department of Mathematics, University of Mumbai, Vidyanagari, Santacruz east, Mumbai 400 098, India}
\email{jyotshana.prajapat@mathematics.mu.ac.in}
\thanks{The first author acknowledge IIT Jodhpur for research grant support as SEED grant and infrastructural support}
\date{}
\begin{document}
	\begin{abstract}
We consider reaction diffusion systems where components diffuse inside the domain and react on the surface through mass transport type boundary conditions on an evolving domain. Using a Lyapunov functional and duality arguments, we
establish the existence of component wise non-negative global solutions.
\end{abstract}
\footnotetext{{\it Keywords:\/}
reaction-diffusion equations, mass transport, conservation of mass, global wellposedness, linear estimates, evolving domain.

{\it AMS Classification:\/}
Primary: 35K57, 35B45}
	
	\maketitle
\section{Introduction}
\label{sec:intro}
The reaction–diffusion mechanism is one of the simplest and most elegant pattern formation models. Turing (1952) \cite{RefWorks:26} first proposed the mechanism in the context of biological morphogenesis, showing that reactions between two diffusible chemicals (morphogens) could give rise to spatially heterogeneous concentrations through an instability driven by diffusion. Recently there has been ample of studies on models that involve coupled bulk surface dynamics \cite{RefWorks:102}, \cite{RefWorks:103},\cite{RefWorks:5},\cite{RefWorks:3}, \cite{RefWorks:11}. Hahn et al \cite{RefWorks:14} model the surfactant concentration by use of coupled bulk-surface model and R\"atz and R\"oger \cite{RefWorks:15}, \cite{RefWorks:16} studied the symmetry breaking in a bulk surface reaction diffusion model for signalling networks. In the former work, a reaction–convection diffusion is proposed that couples the concentration of the surfactants in the bulk and on the free surfaces while in the latter work, a single diffusion partial differential equation is formulated inside the bulk of a cell, while on the cell surface a system of two membrane reaction diffusion equations is formulated. 

Sharma and Morgan, \cite{RefWorks:1} worked on coupled reaction diffusion system with $m$ components in the bulk coupled with $n$ components on the boundary and under certain conditions established the local and global wellposedness of the model. They further established the uniform boundedness of the solution. Recent advances in mathematical modelling and developmental biology identify the important role of evolution of  domains during the reaction process as central in the formation of patterns, both empirically (Kondo and Asai \cite{RefWorks:17}) and computationally (Comanici and Golubitsky \cite{RefWorks:20} ; Crampin et al. \cite{RefWorks:21}). Experimental observations on the skin pigmentation of certain species of fish have shown that patterns evolve in a dynamic manner during the growth of the developing animal. Kondo and Asai \cite{RefWorks:17} describe observations on the marine angelfish Pomacanthus semicirculatus, where juveniles display a regular array of vertical stripes which increase in number during growth, with new stripes appearing in the gaps between existing ones as the animal doubles in length. Further in \cite{RefWorks:21}, Crampin et al investigated the sequence of patterns generated by a reaction–diffusion system on growing domain. They derived a general evolution equation to incorporate domain growth in reaction–diffusion models and consider the case of slow and isotropic domain growth in one spatial dimension.  The results that Crampin et al \cite{RefWorks:21} present, suggest at least in one-dimension, that growth may in fact stabilize the frequency-doubling sequence and subsequently that it may be a mechanism for robust pattern formation. Also, in this respect, many numerical studies, such as in  Barrass et al. \cite{RefWorks:22}; and Madzvamuse and Maini  \cite{RefWorks:23} ; Madzvamuse \cite{RefWorks:24}, of RDS's on evolving domains are available. We also observed that Kulesa et al. \cite{RefWorks:25} have incorporated exponential domain growth into a model for the spatio-temporal sequence of initiation of tooth primordia in the Alligator Mississippiensis. In the model, domain growth plays a central role in establishing the order in which tooth precursors appear.

A specific feature of reaction-diffusion patterns on growing domains is the tendency
for stripe patterns to double in the number of stripes each time the domain doubles in length, called mode-doubling. Since their seminal introduction by Turing \cite{RefWorks:26}, reaction-diffusion systems (RDS's) have constituted a standard framework for the mathematical modelling of pattern formation in chemistry and biology.  Numerous studies on the stability of solutions of RDS's on fixed domains are available, for example, Hollis et al. \cite{RefWorks:5} ; Rothe \cite{RefWorks:27}; Sharma and Morgan \cite{RefWorks:1}, but very little literature regarding the global wellposedness of solutions of RDS's on evolving domains. In direction of stability, Madzvamuse et al.\cite{RefWorks:28} provides a linear stability analysis of RDS's on continuously evolving domains, and Labadie \cite{RefWorks:29} examines the stability of solutions of RDS's on monotonically growing surfaces. Chandrashekar et al \cite{RefWorks:30} showed that RDS fulfils a restricted version of certain stability conditions, introduced by Morgan \cite{RefWorks:3} for fixed domain, then the RDS fulfills the same stability conditions on any bounded spatially linear isotropic evolution of the domain. They prove that, under certain conditions, the existence and uniqueness for a RDS on a fixed domain implies the existence and uniqueness for the corresponding RDS on an evolving domain. This is, to our best knowledge, the first result that holds independently of the growth rate and is thus valid on growing or contracting domains as well as domains that exhibit periods of growth and periods of contraction.  Again these models arise in the area of tissue engineering and regenerative medicine, elctrospun membrane which  are useful in applications such as filtration systems and sensors for chemical detection.

In \cite{RefWorks:99}, \cite{RefWorks:88}, \cite{RefWorks:19} and the references therein, the authors derive the equation for the reaction diffusion equation on a growing manifold with or without boundary. They imposed special growth conditions such as isotropic (including exponential) or anisotropic and studied the behaviour of solutions. More precisely, they studied  pattern formation on  a manifold beginning with an initial static pattern and compared it with the final  pattern after the manifold stops growing. The main focus of their work has been  stability analysis and numerical simulations to study the development of patterns with growth, on curved surfaces.

In this paper, we  prove the global existence for solutions of reaction diffusion system on a  domain in $\R^n$ evolving with time.   In \cite{RefWorks:5}, the authors had proved global existence and uniform boundedness for a class of two component reaction diffusion system where one of the components is given to be apriori bounded as long as the solutions exists. This was extended in \cite{V1}  to more general system involving two components and with Neumann boundary conditions using Lyapunov type functional for deriving the apriori esitmates.  Keeping in mind the possible applications to systems such as Brusselator (see Section  \ref{eg}), here we use techniques of \cite{V1} to obtain the global existence for a two component reaction diffusion system on an evolving domain in the case when one of the component remains apriori bounded. Extension of the estimates of Fabes-Riviere \cite{RefWorks:55} to a more general operator and construction of a suitable Lyapunov functional are crucial ingredients in our proof to obtain H\"older and  $L_p$ estimates. These results as well as the local existence is proved here  for  $m$ component system of reaction diffusion equation.  We also define a Lyapunov functional different from the one used in  \cite{{RefWorks:1}} and \cite{RefWorks:8} which can be used to obtain $L_p$ estimates for the $m$ component system, as in these references. Once this is done, the global existence for the general case of system of  $m$ components on evolving domain will follow from arguments similar to \cite{{RefWorks:1}}. 

 As in many of the existing  works,   we  consider here dilational  anisotropic as well as isotropic growth, though the arguments extend to a more general growth. Consider   compact domains  $ \Om_t \subset \R^n$, $t \geq 0$ with boundary $\pa \Om_t = \Gamma_t$ evolving according to the given law (flow)  $y(x,t)$ so that we can represent $\Om_t = y_t(\Om_0)= y_t(\Om)$, $t \geq 0$ where $\Om_0 = \Om$ is the initial domain. We assume that $y$ is a diffeomorphism and as in \cite{RefWorks:19}, it is separable in $t$ and $x$ variable.   In practice, one expects that for an arbitrary domain, at a future time $t$, the boundary $\Ga_t$ may begin self intersecting, or the domain $\Om_t$ may split. Here we are interested in modeling situations where the domain does not break up and the boundary evolves in such a way that $\Gamma_t$ continues to remain smooth. So, without loss of generality we assume that the domain and hence the boundary remain asymptotically close to a fixed domain, which we denote by $\Om_\infty$ with boundary $\Ga_\infty$, and that for each $t \geq 0$, $\Gamma_t$  is $C^{2+ \mu}$. 
%Futhermore, to carry out our analysis, we assume that the domain $\Om_t$ approaches a fixed domain $\Om_\infty$ as $t \to \infty$ or that it remains in neighbourhood of  a fixed domain $\Om_\infty$ as $t \to \infty$, so that we can allow for ``oscillatory behaviour'' for the domain in the limit (i.e.,  $dist(\Ga_t, \Ga_\infty) < \var$ for all $t >> 0$).
Letting $c_i$ denote the concentration of the $i$-th component, we consider the  system of equations
\bea \label{c0}
\frac{\pa c_i}{\pa t} & = & d_i \Delta c_i + f_i(c_1, \ldots, c_m) \quad y \in  \Om_t, t \in (0,T), 1\leq i \leq m, \nonumber\\
d_i \frac{\pa c_i}{\pa \eta_t} & = &  g_i(c_1, \ldots, c_m) \quad y \in  \Gamma_t, t \in (0,T), 1\leq i \leq m, \\
c_i & = & c^0_i \quad y \in  \Om_0= \Om, t =0, 1\leq i \leq m \nonumber\eea 
where  $\eta_t$ denotes the outward unit normal vector to the boundary $\Ga_t$ and $\Om_0= \Om$ is the initial domain. 

 Note that our results can be  generalised to a manifold $(\Om, {\it g })$ with boundary where Laplacian $\Delta$ is replaced by the Laplace-Beltrami operator $\Delta_g$ corresponding to the Riemannian metric ${\it g}$.  This  will be done in our future work \cite {V2}, where we prove the global existence of solutions for volume-surface reaction diffusion systems on manifolds.

The first step is to transform  the system of equations (\ref{c0})  on $\Om_t$  to an
equivalent system on the initial domain $\Om$, as in \cite{RefWorks:99}, \cite{RefWorks:88}, \cite{RefWorks:19}.  The information on how the domain is evolving is captured in the diffusion term of the transformed equation, and generally, the evolution of domain is  described by a flow which is separable in time and spatial variables. 
% Here we have assumed that the flow depends only on the time variable. 
We note that in  \cite{RefWorks:19}, a suitable transformation was used so that the  diffusion term in the resulting equation does not have time dependent term. Whereas we do analysis of the transformed equation with diffusion term  depending on the time variable.

The plan of paper is as follows. We begin by  the derivation of the equations on the evolving domain and reduction to a pull back system on the intial domain $\Om$ in Section 2.  Section 3 contains primary assumptions on the vector fields $f$ and $g$, and statements of our local and global existence results. In addition to quasi-positivity assumptions that guarantee the componentwise nonnegativity of solutions, we also assume polynomial bounds, and that the reaction vector fields satisfy a condition that is similar to the condition given in \cite{ RefWorks:8} and \cite{V1}. In Section 4 we discuss the H\"older estimates which will be useful in establishing the global wellposedness of the model on growing domain. Local existence is established in Section 5 and in Section 6 we develop a boot strapping process based upon duality estimates, and provide a proof of our global existence result. Section 7 contains a few examples.

\section{Equation for evolving domains}
\label{sec:Equation for evolving domains}
Here we  show how to reduce the system (\ref{c0}) on $\Om_t$ to a system on the fixed domain $\Om$. For simplicity of notations and keeping in mind practical applications, we show the derivation for domains $\Om_t \subset \R^3$.  Let $y_t: \Om \to \R^3 $ be    a one parameter family of diffeomorphisms such that $y_0 = Id$, the identity map and  $\Om _t = y_t(\Om)$ denote the domain evolving with time $t \geq 0$ such that $\Om_0 = \Om$.
 We  obtain a parametrization for   $\Om_t$ by writing $y \in \Om_t$  as $y = y(x,t) = (y_1(x,t), y_2(x,t), y_3(x,t)) = y_t(x)$ for $t \geq 0$, so that 
 \[y(x, 0) = x \in \Om_0 = \Om.\]
 If  $c$ denotes the chemical concentration in the domain $\Om_t$, then the diffusion process  for $c$  is driven by the equation
\be\label{1}\frac{d}{dt} \int\limits\limits\limits_{\Om_t} \left( c (y) \, d \Om_t \right) = D \int\limits\limits\limits_{\pa \Om_t} \nabla c(y) \cdot \nu_t \, d\si_t \ee
where  $d \Om_t = dy$ is the volume element in $\Om_t$,  $\si_t$ is a parametrization for $\pa \Om_t$ and $d \si_t$ is the surface area element for $\pa \Om_t$. Since $y$ is a diffeomorphism, we have $dy ={  \sqrt{det (Dy(x,t))} dx}$ and hence
\bea\label{02} 
&&\frac{d}{dt} \int\limits\limits\limits_{\Om_t} \left( c (y, t) \, d y \right) =  \frac{d}{dt} \int\limits_{\Om}  c (y(x,t), t) \, {  \sqrt{det (Dy(x,t))}}  \,dx \nonumber \\
 & = & \int\limits\limits\limits_{\Om} \left\{\frac{d y}{dt}(x,t) \cdot \nabla c(y(x,t), t) + \frac{d c}{dt}(y(x,t), t) \right\} \, { \sqrt{ det (Dy(x,t))}} \, dx \nonumber\\
 &&+  \int\limits\limits\limits_{\Om}c(y(x,t), t) \frac{d }{dt}\left( { \sqrt{ det (Dy(x,t)) }}\right)    \, dx, \eea
while using the Stokes theorem and change of variables, we see that
\bea \label{03}
D \int\limits\limits\limits_{\pa \Om_t} \nabla c(y, t) \cdot \nu_t \, d\si_t(y)  &=& D \int\limits\limits\limits_{\Om_t} \Delta c(y) \, dy\nonumber \\
&=& D \int\limits\limits\limits_{\Om} \Delta_t c((y(x),t),t)   \, {  \sqrt{det (Dy(x,t))}} dx
. \eea
Note, $\frac{\pa}{\pa y_i} c(y, t) = \sum\limits_{j=1}^3\frac{\pa x_j}{\pa y_i} \frac{\pa c}{\pa x_j} (y(x, t), t)$ so that 
\[\frac{\pa^2}{\pa y_i^2} c(y, t) = 
\sum_{j, k=1}^3\frac{\pa x_j}{\pa y_i} \frac{\pa x_k}{\pa y_i} \frac{\pa^2 c}{\pa x_j \pa x_k} (y(x, t), t)
+\sum_{j=1}^3\frac{\pa^2 x_j}{\pa y_i^2} \frac{\pa c}{\pa x_j} (y(x, t), t).\]
Thus, \be \Delta_t = \sum_{i=1}^3\sum_{j, k=1}^3\frac{\pa x_j}{\pa y_i} \frac{\pa x_k}{\pa y_i} \frac{\pa^2 }{\pa x_j \pa x_k}  + \sum_{i=1}^3\sum_{j=1}^3\frac{\pa^2 x_j}{\pa y_i^2} \frac{\pa }{\pa x_j}\ee
Combining equations (\ref{1})-(\ref{02}) we get that the concentration in the domain $\Om_t$ satisfies the equation
\bea\label{04}
&& \frac{d c}{dt}(y(x,t), t) + \frac{d y}{dt}(x,t) \cdot \nabla c(y(x,t), t))\nonumber\\
 & = &    D \Delta_t c((y(x),t),t) - \frac{1}{{  \sqrt{det (Dy(x,t))}}} \frac{d }{dt} \left( {  \sqrt{det (Dy(x,t)) }} \right) c(y(x,t), t)  \eea
for $ x \in \Om$ and $t \geq 0$. Define the function $u: \Om \times [0,T)$ as 
\[u(x,t) := c(y(x,t),t),\]
then $\frac{d}{dt}u(x,t)= \frac{d}{dt} c(y(x,t),t) = \frac{d c}{dt}(y(x,t), t) + \frac{d y}{dt}(x,t) \cdot \nabla c(y(x,t), t)$. The equation (\ref{04}) can now be written in terms of $u$ as
\be\label{05}
\frac{d u}{dt}(x,t) =  D \Delta_t u(x,t) - \frac{1}{{  \sqrt{det (Dy(x,t))}}} \frac{d }{dt} \left( {  \sqrt{det (Dy(x,t))  }}\right) u(x,t), ~\mbox{for}~ (x, t) \in \Om \times [0,t).\ee

In particular, for the flow $y(x,t):=  A(t) x$, $t \geq 0$ where $A(t):\R^3 \to \R^3$ is a family of diffeomorphism such that
$A(0) = Id$, the identity map so that $\Om_t = A(t) \Om$. As a special case we let 
\[A(t) = \left(\begin{array}{ccc} 
\la_1(t) &0 &0 \\
0 & \la_2(t) &0 \\
0 &0 & \la_3(t) \end{array} \right)
\] so that there is dilational growth- which is isotropic if $\la_1(t)= \la_2(t) = \la_3(t)$ and anisotropic otherwise.
The equation (\ref{05}) in this case is
\be \frac{d u}{dt}(x,t) =  D \Delta_t u(x,t) -  \frac{ ({  \sqrt{\la_1(t)\la_2(t)\la_3(t)}})'}{{  \sqrt{ \la_1(t)\la_2(t)\la_3(t)}}}  u(x,t), ~\mbox{for}~ (x, t) \in \Om \times [0,t)\ee 
with \[  \Delta_t = \sum\limits_{i=1}^3 \frac{1}{\la_i(t)^2} \frac{\pa^2 }{\pa x_i^2}  \]

Let $\si:[\al_1,\al_2] \times [\beta_1, \beta_2] \to \R^3$ be a parametrization of $\pa \Om = \Ga $ with
\[\si(\al, \beta) = ( x(\al, \beta), y(\al, \beta), z(\al, \beta))\] so that we can express  parametrization for  $\pa \Om_t = \Ga_t$ as $\si^t:[\al_1,\al_2] \times [\beta_1, \beta_2] \to \R^3$ with
\[\si^t(\al, \beta)= \si(\al, \beta, t) = ( x(\al, \beta, t), y(\al, \beta, t), z(\al, \beta, t))\] where for $t= 0$,
\[\si^0(\al, \beta) =\si(\al, \beta, 0) = \si(\al, \beta)  \] 
For a point $p_t  = \si^t(\al_0, \beta_0)\in \Ga_t$, the tangent plane is $T_{p_t} \Ga_t = span \{ \si^t_\beta (\al_0, \beta_0),\si^t_\alpha (\al_0, \beta_0) \} $ and the normal at this point is 
\be\label{normal}\nu_t(\al_0, \beta_0) = \si^t_\al \times \si^t_\beta (\al_0, \beta_0). \ee
%The derivation will hold for all dimensions (Crampin, Plaza..) and for general manifold as well.
%The equation for the surface concentration $v$ is written following above argument, where the equation (\ref{1}) for diffusion process of $v$ is 
%\be\label{4}\frac{d}{dt} \int\limits\limits\limits_{\si_t}  v \, d \si_t =  0 \ee
%as the boundary of $\Ga$ is empty. The equation satisfied by $v( \si(\al, \beta, t)) $ on $ [\al_1,\al_2] \times [\beta_1, \beta_2]$ is given as equation 17 b of Krausse paper. 
%\bea
%\frac{\pa v}{\pa t} & = & \frac{1}{|det G|^{1/2}} \left\{\frac{\pa}{\pa \al} ( |det G|^{1/2} G^{-1}_{11} \frac{\pa v}{\pa \al})  + \frac{\pa}{\pa \al} ( |det G|^{1/2} G^{-1}_{12} \frac{\pa v}{\pa \beta}) \right.\nonumber \\
%&& \left.  \frac{\pa}{\pa \beta} ( |det G|^{1/2} G^{-1}_{21} \frac{\pa v}{\pa \al}) + \frac{\pa}{\pa \beta} ( |det G|^{1/2} G^{-1}_{22} \frac{\pa v}{\pa \beta})\right\} \nonumber \\
%&& - \frac{\pa}{\pa t} \left( \log ( |det G|^{1/2}) \right) v + F(u,v) \quad \mbox{on} \Ga. \eea
The compatibility condition  is given by
\be D \nabla c \cdot \nu_t = G(u, v, t) \quad \mbox{on}\quad \Ga_t,\ee
which can be transformed to fixed boundary $\Ga$ as 
\be 
D \nabla_t u (\si(\al, \beta, t)) \cdot \nu = G(u, v, t) \quad \mbox{on}\quad \Ga\ee
where $\nu_t$ is defined as in (\ref{normal}).
%Problems:
%1. To consider the global existence for system with Dirichlet/Neumann/mixed boundary condition in the $\Om$;\\
%2. To consider the global existence for system on $\pa \Om$  with suitable condition;\\
%3. To consider the global existence for interactive system with Dirichlet/Neumann/mixed boundary condition in the $\Om$ and $\pa \Om$;\\
With our parametrization,  $\Ga_t = y_t( \Ga)$ can be simply expressed as 
\[\si_t(\al, \beta) = y_t( x(\al, \beta), y(\al, \beta), z(\al, \beta))\] 
so that the area element for $\Ga_t$ is $\sqrt{\det y_t} d\si $ and hence the  pull back of this surface measure  on $\Ga$ will be  $\sqrt{\det A(t)} d\si $. We may also consider $A(t) = (a_{ij}(t))$, in which case we obtain a more complicated expression for $\Delta_t$ and rest of the arguments will follow similarly.

%{\bf Remark:\/} If we write  $y(x,t):=  A(t) B(x)$, where $A(t)$ is the diagonal matrix as above and   $B:\R^3 \to \R^3$, $B(x) = (b_1(x), b_2(x), b_3(x))$ such that $DB(x) = (b_{ij}(x)) = \left(\frac{\pa b_{j}}{\pa x_i}(x) \right) $ is non singular for each $x \in \Om$ then $det (Dy(x,t)) = det (A(t)) det(b_{ij}(x))$ in which case the equation (\ref{05}) becomes
%\be\label{005}
%\frac{d u}{dt}(x,t) =  D \Delta_t u(x,t) - \frac{1}{det (A(t))} \frac{d }{dt} \left( det (A(t))  \right) u(x,t), ~\mbox{for}~ (x, t) \in \Om \times [0,t)\ee   where 
%%\[ \Delta_t =\left[ A(t)\left( \begin{array}{c} \frac{\pa b_1}{\pa x_i}\frac{\pa }{\pa x_1}\\
%%\frac{\pa b_2}{\pa x_i}\frac{\pa }{\pa x_2}\\ \frac{\pa b_3}{\pa x_i}\frac{\pa }{\pa x_3} \end{array} \right) \right] \left[     \]
%\[ \Delta_t = \sum\limits_{i=1}^3\sum\limits_{j=1}^3\sum\limits_{k=1}^3  \frac{ \tilde b_{ji} \tilde b_{ki}}{  \la_j \la_k}\frac{\pa^2 }{\pa x_i^2} +   \sum_{i=1}^3\sum_{j=1}^3\frac{(\pa_i \tilde b_{ji})}{ \la_j} \frac{\pa }{\pa x_j}\]
%where $(\tilde b_{ij}) = (b_{ij}(x))^{-1}$.

\section{Notations and Main results}

%We denote the cylinder $\Om \times (0,T)$ by $\C_T$ and the closed cylinder $\overline{\Om} \times [0,T]$ by $\bar{\C}_T$. 
%$W_q^{2l,l}(\C_T)$ for $l$ integer and $q\geq 1$ is the Banach space of functions in $L_q(\C_T)$ with generalized derivatives $\pa_t^r\pa_{x}^s$ with $2r+s \leq 2l$ and norm is defined as 
%\[ ||u||_{q, \C_T}^{(2)} = \sum\limits_{j =0}^{2l} \sum\limits_{ 2r + s= j }  ||\pa_t^r\pa_{x}^s u||_{q, \C_T}.\]
%$W_2^{1,0}(\C_T)$ is the Hilbert space with scalar product
%\[ <u, v>_{W_2^{1,0}(\C_T)} = \int\limits\limits\limits_{\C_T} ( uv + \pa_k u \pa_k v )\, dx\,dt\]
%and 
%$W_2^{1,1}(\C_T)$ is the Hilbert space with scalar product
%\[ <u, v>_{W_2^{1,1}(\C_T)} = \int\limits\limits\limits_{\C_T} ( uv + \pa_k u \pa_k v  + u_tv_t)\, dx\,dt.\]
%$V_2(\C_T)$ is the Banach space of all elements of $W_2^{1,0}(\C_T)$ with finite norm
%\[||u||_{\C} =  \]

Throughout this paper, $n\geq 2$ and $\Omega$ is a bounded domain in $\mathbb{R}^n$ with smooth boundary $\Gamma$ ($\partial \Omega$) belonging to the class $C^{2+\mu}$ with $\mu>0$ such that $\Omega$ lies locally on one side of its  boundary, $\eta$ is the unit outward normal (from $\Omega$) to $\partial\Omega$, and $\Delta$ is the Laplace operator. In addition, $m, k, n,i$ and $ j$ are positive integers.
\subsection{Basic Function Spaces}
Let $\mathcal{C}$  be a bounded domain on $\mathbb{R}^m$ with smooth boundary such that $\mathcal{C}$ lies locally on one side of $\partial\mathcal{C}$. We define all function spaces on $\mathcal{C}$ and $\mathcal{C}_T=\mathcal{C}\times(0,T)$. 
$L_q(\mathcal{C})$ is the Banach space consisting of all measurable functions on $\mathcal{C}$ that are $q^{th}(q\geq 1)$ power summable on $\mathcal{C}$. The norm is defined as\[ \Vert u\Vert_{q,\mathcal{C}}=\left(\int_{\mathcal{C}}| u(x)|^q dx\right)^{\frac{1}{q}}\]
Also, \[\Vert u\Vert_{\infty,\mathcal{C}}= ess \sup\lbrace |u(x)|:x\in\mathcal{C}\rbrace.\]
Measurability and summability are to be understood everywhere in the sense of Lebesgue.

If $p\geq 1$, then $W^2_p(\mathcal{C})$ is the Sobolev space of functions $u:\mathcal{C}\rightarrow \mathbb{R}$ with generalized derivatives, $\partial_x^s u$ (in the sense of distributions) $|s|\leq 2$ belonging to $L_p(\mathcal{C})$.  Here $s=(s_1,s_2,$...,$s_n),|s|=s_1+s_2+..+s_n$, $|s|\leq2$, and $\partial_x^{s}=\partial_1^{s_1}\partial_2^{s_2}$...$\partial_n^{s_n}$ where $\partial_i=\frac{\partial}{\partial x_i}$. The norm in this space is \[\Vert u\Vert_{p,\mathcal{C}}^{(2)}=\sum_{|s|=0}^{2}\Vert \partial_x^s u\Vert_{p,\mathcal{C}} \]

Similarly, $W^{2,1}_p(\mathcal{C}_T)$ is the Sobolev space of functions $u:\mathcal{C}_T\rightarrow \mathbb{R}$ with generalized derivatives, $\partial_x^s\partial_t^r u$ (in the sense of distributions) where $2r+|s|\leq 2$ and each derivative belonging to $L_p(\mathcal{C}_T)$. The norm in this space is \[\Vert u\Vert_{p,\mathcal{C}_T}^{(2)}=\sum_{2r+|s|=0}^{2}\Vert \partial_x^s\partial_t^r u\Vert_{p,\mathcal{C}_T} \] In addition to $W^{2,1}_p(\mathcal{C}_T)$, we will encounter other spaces with two different ratios of upper indices, 
$W_2^{1,0}(\mathcal{C}_T)$, $W_2^{1,1}(\mathcal{C}_T)$, $V_2(\mathcal{C}_T)$, $V_2^{1,0}(\mathcal{C}_T)$, and $V_2^{1,\frac{1}{2}}(\mathcal{C}_T)$ as defined in \cite{RefWorks:65}.

We also introduce $W^l_p(\mathcal{C})$, where $l>0$ is not an integer, because initial data will be taken from these spaces. The space $W^l_p(\mathcal{C})$ with nonintegral $l$, is a Banach space consisting of elements of $W^{[l]}_p$ ([$l$] is the largest integer less than $ l$) with the finite norm\[\Vert u\Vert_{p,\mathcal{C}}^{(l)} =\langle u\rangle_{p,\mathcal{C}}^{(l)}+\Vert u\Vert_{p,\mathcal{C}}^{([l])} \]
where  \[\Vert u\Vert_{p,\mathcal{C}}^{([l])}=\sum_{s=0}^{[l]}\Vert \partial_x^s u\Vert_{p,\mathcal{C}} \] and 
\[\langle u\rangle_{p,\mathcal{C}}^{(l)}=\sum_{s=[l]}\left(\int_\mathcal{C} dx\int_\mathcal{C}{|\partial_x^s u(x)-\partial_y^s u(y)|}^p.\frac{dy}{|x-y|^{n+p(l-[l])}}\right)^\frac{1}{p}\]
$W^{l,\frac{l}{2}}_p(\partial\mathcal{C}_T)$ spaces with non integral $l$ also play an important role in the study of boundary value problems with nonhomogeneous boundary conditions, especially in the proof of exact estimates of their solutions.  It is a Banach space when $p\geq 1$, which is defined by means of parametrization of the surface $\partial\mathcal{C}$. For a rigorous treatment of these spaces, we refer the reader to page 81 of Chapter 2 of \cite{RefWorks:65}.

The use of the spaces $W^{l,\frac{l}{2}}_p(\partial\mathcal{C}_T)$ is connected to the fact that the differential properties of the boundary values of functions from $W^{2,1}_p(\mathcal{C}_T)$ and of certain of its derivatives, $\partial_x^s\partial_t^r$, can be exactly described in terms of the spaces $W^{l,\frac{l}{2}}_p(\partial\mathcal{C}_T)$, where $l=2-2r-s-\frac{1}{p}$.

For $0<\alpha,\beta<1$,  $C^{\alpha,\beta}(\overline{\mathcal{C}_T})$ is the Banach space of H\"{o}lder continuous functions $u$ with the finite norm \[ |u|^{(\alpha)}_{\overline{\mathcal{C}_T}}= \sup_{(x,t)\in{\mathcal{C}_T}} |u(x,t)|+[u]^{(\alpha)}_{x,\mathcal{C}_T}+[ u]^{(\beta)}_{t,\mathcal{C}_T}\]
where \[ [u]^{(\alpha)}_{x, {\overline{\mathcal{C}_T}}}= \sup_{\substack{(x,t),(x',t)\in {\mathcal{C}_T}\\ x\ne x'}}\frac{|u(x,t)-u(x',t)|}{|x-x'|^{\alpha}}\] and \[  [u]^{(\beta)}_{t, {\overline{\mathcal{C}_T}}}= \sup_{\substack{(x,t),(x,t')\in {\mathcal{C}_T}\\ t\ne t'}}\frac{|u(x,t)-u(x,t')|}{|t-t'|^\beta}\]
We shall denote the space $C^{\frac{\alpha}{2},\frac{\alpha}{2}}(\overline{\mathcal{C}_T})$ by $C^{\frac{\alpha}{2}}(\overline{\mathcal{C}}_T)$. $ C(\mathcal{C}_T,\mathbb{R}^n)$ is the set of all continuous functions $u: \mathcal{C}_T \rightarrow \mathbb{R}^n$, and
$ C^{1,0}(\mathcal{C}_T,\mathbb{R}^n)$ is the set of all continuous functions $u: \mathcal{C}_T\rightarrow \mathbb{R}^n$ for which $ u_{x_i}$ is continuous for all $1\leq i\leq n$.
$ C^{2,1}(\mathcal{C}_T,\mathbb{R}^n)$ is the set of all continuous functions $u: \mathcal{C}_T \rightarrow \mathbb{R}^n$ having continuous derivatives $u_{x_i},u_{{x_i}{x_j}}\ \text{and}\ u_t$ in $\mathcal{C}_T$.
Note that similar definitions can be given on $\overline{\mathcal{C}_T}$.

{\bf Assumptions on the system:\/} Let $\Om\subset \R^n$  with $C^{2+ \mu}$ boundary $\Ga$ for some $\mu > 0$. We consider the system 
\be\label{grow}\left.\begin{array}{rll}
 \frac{d u}{dt}(x,t) &= & {\mathcal L} u(x,t) +f_1(u,v),\quad~\mbox{in}~  \Om \times [0,t) \\
 \frac{d v}{dt}(x,t) &= & \tilde{\mathcal L}  v(x,t)+f_2(u,v), \quad
~\mbox{in}~ \Om \times [0,t)\\
\nabla_t u \cdot \eta=g_1(u,v) ,
 \ & & \nabla_t v \cdot \eta=g_2(u,v)\quad
~\mbox{on}~  \in \partial\Omega \times [0,t) \\
 u=u_0, \ & &v=v_0  \quad~\mbox{in}~ (x, t) \in \Om \times \{0\}
\end{array}\right\} \ee
with
% \[  \Delta_t = \sum\limits_{i=1}^n \frac{1}{\la_i(t)^2} \frac{\pa^2 }{\pa x_i^2}  \]
the operator \be\label{oper} 
{\mathcal L } =  D \sum\limits_{i=1}^n \frac{1}{\la_i(t)^2} \frac{\pa^2 }{\pa x_i^2} -  \frac{ \left({  \sqrt{\prod\limits_{i=1}^n\la_i(t)}}\right)'}{ {  \sqrt{\prod\limits_{i=1}^n\la_i(t)}}}  =   D\Delta_t - a(t) \ee 
and 
\be \tilde{\mathcal L } =  \tilde D\Delta_t - a(t) \ee 
where \be a(t) = \frac{ \left({  \sqrt{\prod\limits_{i=1}^n\la_i(t)}}\right)'}{ {  \sqrt{\prod\limits_{i=1}^n\la_i(t)}}}.\ee 
 We assume that there exists constants $\La_1$, $\La_2 >0$ and $k_1$, $k_2 > 0$ such that 
\be \label{k}\left.\begin{array}{lll}
&&\La_1 \leq \frac{1}{\la_i^2(t)} \leq \La_2, \, i = 1, \ldots, n\\
 && k_1 \leq a(t) \leq k_2. 
\end{array} \right\}\ee
Here \be \nabla_t  = A(t)^{-1} \nabla =\left(\frac{1}{\la_1(t)} \pa_{x_ 1}, \frac{1}{\la_2(t)} \pa_{x_2},\ldots,  \frac{1}{\la_n(t)} \pa_{x_n}\right)\ee and $\eta$ is the unit outward normal vector on $\Ga$.

For sake of completeness, we also mention here the extension to $m $ components for evolving domains. That is, let ${\bf u} = (u_1, \ldots, u_m)$ be solution of the system 
\be\label{m-grow}\left.\begin{array}{rll}
 \frac{\pa u_i}{\pa t}(x,t) &= & {\mathcal L} u_i(x,t) +f_i(u),\quad~\mbox{in}~  \Om \times [0,t) \\
 \nabla_t u_i \cdot \eta &= &g_i(u) \quad
~\mbox{on}~  \in \partial\Omega \times [0,t) \\
 u_i & = & u_0,  \quad~\mbox{in}~ (x, t) \in \Om \times \{0\}
\end{array}\right\}  1\leq i\leq m, \ee
with
\be {\mathcal L }=  D \sum\limits_{i=1}^n \frac{1}{\la_k(t)^2} \frac{\pa^2 }{\pa x_k^2} -  \frac{ \left({  \sqrt{\prod\limits_{i=1}^n\la_i(t)}}\right)'}{ {  \sqrt{\prod\limits_{i=1}^n\la_i(t)}}}  =   D\Delta_t - a(t) \ee
$a(t)$ and $\nabla_t$ as before. We remark that throughout, $\mathbb{R}^m_{+}$ is the nonnegative orthant in $\mathbb{R}^m$, $m \geq 2$. Here we list the  assumptions required to prove our results for a general $m$ component system, with the understanding  that for $m=2$, we denote $u_1 = u$, $u_2 = v$ and $d_1=D$, $d_2 = \tilde D$.
\begin{enumerate}
\item[($V_{\text{N}}$)]  ${ u_0}=({u_0}_i)\in C^2(\overline\Omega)$ and is componentwise nonnegative on $\overline\Omega$. Moreover,  $u_0$ satisfies the compatibility condition\\
\[ d_i\frac{\partial {u_0}_i}{\partial \eta}=g_i({u_0})\quad \text{on}\ \Gamma, \quad \mbox{for all } i = 1, \ldots, m.\]

\item[{\bf ($V_{\text{F}}$)}] $f_i,g_i:\mathbb{R}^m\rightarrow \mathbb{R}$, for $i=1,\ldots, m$ are {locally Lipschitz}.
\item[($V_{\text{QP}}$)] $f$ and $g$ are {\em quasi positive}. That is, for each $i=1,...,m$, if $u\in\mathbb{R}^m_{+}$ with $u_i=0$ then $f_i(u), g_i(u)\geq 0$.  
 
 \item[($V_{\text{L1 }}$)] There exists constants $b_j > 0$ and $L_1 > 0$ such that 
 \[\sum_{j=1}^m b_j f_j(z),  \sum_{j=1}^m b_j g_j(z) \leq L_1
 \left( \sum_{j=1}^m  z_j + 1 \right) \quad \mbox{for all } z \in \R^m_+. \]

\item[($V_{\text{Poly}}$)] For $i=1,\ldots,m$, $f_i$ and $g_i$ are {\em polynomially bounded}.  That is, there exists $K_{fg}> 0$ and a natural number $l$ such that \[  f_i(u,v),  g_i(u,v)   \leq K_{fg}\left(u+v+1\right)^l \ \text{for all}\  (u,v)\in \mathbb{R}^m_{+}.\]
\end{enumerate}

Under the assumption that $f= (f_1, \ldots, f_m)$ and $g = (g_1, \ldots, g_m)$ are locally Lipschitz,  we are able to prove the following local existence result.
\begin{theorem}{\bf (Local Existence) }\label{local}
Suppose $(V_N)$, $(V_{F})$, and $(V_{QP})$ holds. Then there exists $T_{\max}>0$ such that $\left (\ref{grow}\right)$ has a unique, maximal, component-wise nonegative solution $(u,v)$ with $T=T_{\max}$.  Moreover, if $T_{\max}<\infty$ then 
\[\displaystyle \limsup_{t \to T^-_{\max}}\Vert u(\cdot,t)\Vert_{\infty,\Omega}+ \limsup_{t \to T^-_{\max}}\Vert v(\cdot,t)\Vert_{\infty,\Omega}=\infty. \]
\end{theorem}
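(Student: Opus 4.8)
The plan is to prove Theorem~\ref{local} by a contraction-mapping (Banach fixed point) argument carried out in a H\"older space, using the linear estimates of Section~4 for the uniformly parabolic operator $\L = D\Delta_t - a(t)$ (uniform ellipticity being guaranteed by the bounds (\ref{k})), and then to establish componentwise nonnegativity by a truncation device combined with a maximum-principle argument exploiting quasi-positivity $(V_{QP})$. It suffices to treat the $m$-component system (\ref{m-grow}); the system (\ref{grow}) is the special case $m=2$ with $d_1=D$, $d_2=\tilde D$, the distinct diffusion coefficients being carried along as $d_i$. Set $Q_T := \Om\times(0,T)$.

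First I would set up the fixed point. Fix $0<\al<\mu$ and, for a radius $M$ determined by $u_0$ and a time $T>0$ to be chosen, work in the closed ball
\[ \mathcal{B}_{M,T} = \{\, w \in C^{\al,\al/2}(\overline{Q_T})^m : w(\cdot,0)=u_0,\ |w-u_0|^{(\al)}_{\overline{Q_T}} \leq M \,\}. \]
For $w=(\tilde u_1,\ldots,\tilde u_m)\in\mathcal{B}_{M,T}$ define $\Phi(w)=(u_1,\ldots,u_m)$, where each $u_i$ solves the \emph{linear} problem
\[ \frac{\pa u_i}{\pa t} = d_i\Delta_t u_i - a(t)u_i + f_i(w)\ \text{in } Q_T, \quad \nabla_t u_i\cdot\eta = g_i(w)\ \text{on } \Ga\times(0,T), \quad u_i(\cdot,0)=u_{0,i}. \]
By $(V_F)$ the maps $f_i,g_i$ are bounded and Lipschitz, with constant $L=L(M)$, on the bounded range of values attained on $\mathcal{B}_{M,T}$; the linear H\"older/Schauder estimates of Section~4 (the extension of the Fabes--Riviere estimates to $\L$) then yield a unique $u_i$ with a quantitative bound on $|u_i|^{(\al)}_{\overline{Q_T}}$, where the compatibility condition in $(V_N)$ is exactly what gives regularity up to $t=0$. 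For $T$ small, $\Phi$ maps $\mathcal{B}_{M,T}$ into itself: the homogeneous part (zero forcing and zero flux, data $u_0\in C^2$) stays within $M/2$ of $u_0$ as $T\to0$, while the contribution of $f_i(w),g_i(w)$ is bounded by $C(M)\,T^{\theta}$ for some $\theta>0$. For the contraction, $\Phi(w_1)-\Phi(w_2)$ solves the linear problem with forcing $f_i(w_1)-f_i(w_2)$ and flux $g_i(w_1)-g_i(w_2)$, each $\leq L\,|w_1-w_2|^{(\al)}$; shrinking $T$ further gives contraction factor $1/2$. Banach's theorem produces a unique fixed point, and parabolic bootstrapping (again using $u_0\in C^2$) upgrades it to a classical $C^{2,1}$ solution.

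For nonnegativity I would first run the argument above for the \emph{modified} system obtained by replacing $f_i(w),g_i(w)$ with $f_i(w^+),g_i(w^+)$, using componentwise positive parts; the estimates are unaffected. To see the resulting solution is nonnegative, fix $i$ and consider a point where $u_i$ attains a negative minimum over $\overline{Q_t}$. By $(V_{QP})$ the modified reaction satisfies $f_i\geq0$ wherever $u_i=0$ and the remaining arguments are nonnegative (their positive parts enter), so the parabolic maximum principle, together with the Hopf boundary lemma applied against $\nabla_t u_i\cdot\eta=g_i\geq0$ on $\Ga$, rules out a negative minimum and forces $u_i\geq0$. Since then $w^+=w$, this solution also solves the original system (\ref{m-grow}), and uniqueness on the common interval identifies it with the fixed point. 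Maximality and the blow-up alternative follow by a standard continuation argument: define $T_{\max}$ as the supremum of existence times, with uniqueness on $[0,T_{\max})$ from the contraction estimate and a Gronwall bound. If $T_{\max}<\infty$ yet $\limsup_{t\to T_{\max}^-}(\|u\|_{\infty,\Om}+\|v\|_{\infty,\Om})<\infty$, the solution values remain in a fixed compact $K\subset\R^m_+$ on which $f,g$ are bounded and Lipschitz, so the existence interval furnished by the fixed-point argument has length bounded below uniformly near $T_{\max}$; restarting close enough to $T_{\max}$ would extend the solution past $T_{\max}$, a contradiction.

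The step I expect to be the main obstacle is closing the contraction in a \emph{single} norm that simultaneously accommodates the nonhomogeneous, nonlinear flux boundary conditions and the time-dependent operator $\Delta_t$: this requires the linear trace and Schauder estimates of Section~4 to furnish constants that are uniform as $T\to0$ and compatible with the Lipschitz bounds on $f$ and $g$. The nonnegativity step is the second subtlety, since the flux coupling forces the maximum principle to be used jointly with the Hopf lemma at the boundary rather than in the interior alone; the truncation device is precisely what keeps that comparison argument clean.
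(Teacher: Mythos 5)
Your proposal is correct in outline but follows a genuinely different route from the paper. The paper does not use a contraction argument at all: it first proves (Theorem \ref{lglobal}) that for \emph{globally} Lipschitz $f,g$ the system has a unique \emph{global} solution, by applying Schauder's fixed point theorem to the map $S(u,v)=(U-u_0,V-v_0)$ on a space of continuous functions --- continuity of $S$ comes from the Lipschitz hypothesis and the $L_q$/H\"older estimates of Lemma \ref{lady} and Corollary \ref{cor1}, compactness from the H\"older regularity gain, and uniqueness separately from a Gronwall inequality. Theorem \ref{local} is then deduced by truncating the locally Lipschitz nonlinearities in their \emph{range}, $f_r=f\phi_r$, $g_r=g\psi_r$ with smooth cutoffs, obtaining a global solution $(u_r,v_r)$ for each $r$, and letting $r\to\infty$; the maximal time $T_{\max}$ and the blow-up alternative fall out of this limit essentially for free, since the truncated and untruncated solutions agree as long as the sup norm stays below $r$. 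Your Banach fixed point in $C^{\al,\al/2}$ buys a more self-contained argument (existence and uniqueness in one stroke, no compactness needed), but it pays for this in exactly the places you flag: you must verify that the constants in Theorem \ref{T3.6} are uniform as $T\to0$, and your continuation step needs the local existence time to be controlled by $\Vert u(\cdot,t_0)\Vert_\infty$ alone, which requires an interpolation/smoothing step since your ball $\mathcal{B}_{M,T}$ is calibrated to the $C^2$ (or $W^2_p$) norm of the data --- the paper's truncation scheme sidesteps this entirely. One small repair to your self-map estimate: the splitting into a ``zero-flux homogeneous part'' plus an inhomogeneous part with zero initial data is incompatible with $(V_N)$ (the pieces individually violate the zero-order compatibility condition $d_i\partial_\eta u_{0,i}=g_i(u_0)$); you should instead subtract $u_0$ extended constantly in time, so that the shifted unknown has zero data and boundary flux vanishing at $t=0$. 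Your nonnegativity argument via positive-part truncation, quasipositivity and the maximum principle with the Hopf lemma is the standard one and matches what the paper invokes implicitly by deferring to Sharma--Morgan.
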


We remark that this local existence result is true for $m$ components with $m \geq 2$ though we have indicated the proof here only for $m = 2$. The following result gives global existence of solutions of (\ref{grow}) in case we know that one of the components is bounded by a suitable function.
\begin{theorem}\label{martinthm}
	Suppose $(V_N)$, $(V_{F})$, $(V_{QP})$, $(V_{L1})$ and $(V_{Poly})$ hold, and let $T_{\max}>0$ be given in Theorem \ref{local}. If there exists a nondecreasing function $h\in C(\mathbb{R}_+,\mathbb{R}_+)$ such that $\|v(\cdot,t)\|_{\infty,\Omega}\le h(t)$ for all $0\le t<T_{\max}$, and there exists $K>0$ so that whenever $a\ge K$ there exists $L_a\ge 0$ so that 
	\begin{align}\label{nearlykoua}
	af_1(u,v)+f_2(u,v), \ ag_1(u,v)+g_2(u,v) \le L_a(u+v+1),\quad\text{for all}\quad (u,v)\in\mathbb{R}_+^2,
	\end{align}
	then  (\ref{grow}) has a unique component-wise nonegative global solution.  
\end{theorem}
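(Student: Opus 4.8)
The plan is to combine the blow-up alternative of Theorem~\ref{local} with a bootstrap that raises the a priori integrability of $u$ from $L_1$ to $L_\infty$. Since $\|v(\cdot,t)\|_{\infty,\Omega}\le h(t)$ with $h$ continuous and nondecreasing, $v$ is bounded by $M:=h(T_{\max})$ on $[0,T_{\max})$ whenever $T_{\max}<\infty$; hence, by the characterisation of $T_{\max}$ in Theorem~\ref{local}, it suffices to prove $\sup_{0\le t<T_{\max}}\|u(\cdot,t)\|_{\infty,\Omega}<\infty$. I note at the outset that the bounds (\ref{k}) force $\mathcal L=D\Delta_t-a(t)$ to be uniformly elliptic in $t$, with coefficients bounded above and below and with $k_1\le a(t)\le k_2$, so every constant produced by the linear (Fabes--Riviere type) estimates of Section~4 can be taken independent of $t$ up to $T_{\max}$.

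First I would obtain an $L_1$ bound. Integrating the $u$-equation over $\Omega$ and using the divergence form of $\Delta_t$ converts the diffusion term into a boundary integral of the conormal derivative, which the boundary condition relates to $g_1$; applying $(V_{L1})$ (for $m=2$, to the weighted interior sum $b_1f_1+b_2f_2$ and to the corresponding boundary sum) together with $v\le M$ and $k_1\le a(t)$, a Gronwall argument yields $\sup_{0\le t<T_{\max}}\|u(\cdot,t)\|_{1,\Omega}<\infty$ and an accompanying $L_1$ bound on the boundary flux.

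The crux is to upgrade this to an $L_p$ bound for each finite $p$, and this is where hypothesis (\ref{nearlykoua}), the Lyapunov functional, and the duality estimates enter. Fixing $a\ge\max\{K,1\}$ and setting $w:=au+v\ge 0$, one computes
\be
\frac{\pa w}{\pa t}-\Delta_t\!\left(\theta\, w\right)= -a(t)\,w+\big(af_1(u,v)+f_2(u,v)\big),\qquad \theta:=\frac{aD\,u+\tilde D\,v}{au+v},
\ee
where $\theta$ is a bounded measurable function with values in $[\min(D,\tilde D),\max(D,\tilde D)]$, and on $\Gamma$ the associated flux equals $ag_1+g_2$. By (\ref{nearlykoua}) both the interior source and the boundary flux are dominated by $L_a(u+v+1)\le L_a(w+1)$, so $w$ satisfies a scalar differential inequality with bounded diffusion coefficient and linearly controlled data. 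I would then run Pierre's duality argument: test against the solution of the backward problem $-\partial_t\phi-\theta\Delta_t\phi=\psi$ with $\phi(\cdot,T)=0$, and invoke the extended Fabes--Riviere $L_p$ estimate of Section~4 for this dual operator (whose coefficient is only bounded measurable, which is precisely why Schauder theory is insufficient). Combined with the Lyapunov functional this produces $\|w\|_{p,\Omega\times(0,T_{\max})}\le C_p$ for every finite $p$, and hence, since $au\le w$, a uniform $L_p$ bound on $u$.

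Finally I would bootstrap from $L_p$ to $L_\infty$. Given $u\in L_p$ for $p$ large, $(V_{Poly})$ places the forcing $f_1(u,v)$ in $L_{p/l}$ and the boundary datum $g_1(u,v)$ in the corresponding trace space; the $W^{2,1}_{p/l}$ parabolic regularity for $\mathcal L$ from Section~4, followed by the embedding $W^{2,1}_q(\Omega\times(0,T_{\max}))\hookrightarrow C^{\alpha,\alpha/2}$ valid for $q>\tfrac{n+2}{2}$, yields a uniform bound on $\|u(\cdot,t)\|_{\infty,\Omega}$ (iterating the regularity step finitely many times to absorb the loss of a factor $l$ at each stage). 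Together with $v\le M$ this contradicts the blow-up alternative of Theorem~\ref{local} unless $T_{\max}=\infty$; uniqueness and componentwise nonnegativity are inherited from the local theory, completing the proof. The main obstacle is the $L_p$ step: because $D\ne\tilde D$ and the reactions act on the boundary, $w=au+v$ solves not a clean uniformly parabolic divergence-form equation but the adjoint-type equation above, for which no maximum principle supplies $L_\infty$ directly; making the duality work with the time-dependent anisotropic $\Delta_t$, the merely bounded coefficient $\theta$, and the boundary flux---while keeping all constants uniform in $t$ via (\ref{k})---is the delicate part, and is exactly what the Section~4 estimates are designed to supply.
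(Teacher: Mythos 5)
Your overall skeleton (contradiction with the blow-up alternative, $L_1$ bound, interior $L_p$ bound by duality, then bootstrap to $L_\infty$) matches the paper's, but the two technical steps that carry the proof are respectively unsupported and missing. First, your interior $L_p$ step rewrites the system as a single equation for $w=au+v$ with the bounded measurable coefficient $\theta=(aDu+\tilde D v)/(au+v)$ and proposes to solve the dual problem $-\partial_t\phi-\theta\Delta_t\phi=\psi$ using ``the extended Fabes--Riviere estimates of Section 4.'' Those estimates, and the $W^{2,1}_{p'}$ regularity of Lemma \ref{lady}, are stated for operators whose second-order coefficients depend on $t$ alone (or are continuous); they do not apply to a coefficient $\theta(x,t)$ that is merely bounded measurable, and indeed $L_{p'}$ maximal regularity for such operators fails in general for $p'\neq 2$, so this version of Pierre's duality only yields $L_2$ bounds. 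The paper avoids this entirely --- and this is exactly where the hypothesis $\|v(\cdot,t)\|_{\infty,\Omega}\le h(t)$ is used --- by taking the dual problem (\ref{phieq}) with the \emph{constant} diffusion $D$ and boundary condition $D\nabla_t\varphi\cdot\eta=L_2\varphi$, and controlling the mismatch term $\int_0^T\!\!\int_\Omega(\tilde D-D)\,v\,\Delta_t\varphi$ by the sup bound on $v$ together with $\|\varphi\|^{(2,1)}_{p',\Omega_T}\le C$. Your proposal never uses the sup bound on $v$ at this stage, which is a sign the argument has drifted from what the hypotheses support.

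Second, and more seriously, you never establish an $L_p(\Gamma_T)$ bound on $u$. Since the nonlinearities $g_1,g_2$ enter through the boundary flux, the final regularity bootstrap (whether via $W^{2,1}_q$ estimates or via Proposition \ref{Holder}, which requires the boundary datum in $L_p(\Ga\times(0,T))$ with $p>n+1$) needs $g_1(u,v)\in L_p(\Gamma_T)$, hence $u\in L_{pl}(\Gamma_T)$; an interior bound $u\in L_p(\Omega_T)$ gives no control of the trace of $u$ on $\Gamma_T$. This is precisely what the paper's Lyapunov functional $P(u,v,p,\Theta^{\beta^2})=\sum_{\beta=0}^p\binom{p}{\beta}\Theta^{\beta^2}u^\beta v^{p-\beta}$ is for: differentiating $P$ in time, using (\ref{nearlykoua}) with $a=\Theta^{2\beta+1}\ge K$, the positive definiteness of the matrix $B(\Theta,D,\tilde D)$ for $\Theta$ large, the lower bound $a(t)\ge k_1$, and the trace interpolation of Lemma \ref{omega} produces the bound on $\int_0^T\!\!\int_\Gamma(u^p+v^p)$. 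You mention the Lyapunov functional only in passing, attached to the interior estimate where it is not needed, and your closing sentence about placing $g_1(u,v)$ ``in the corresponding trace space'' presupposes exactly the boundary estimate that is absent. Until both of these steps are repaired --- replace the $\theta$-duality by the fixed-coefficient duality exploiting $\|v\|_\infty\le h(T)$, and add the Lyapunov-functional argument for $\|u\|_{p,\Gamma_T}$ --- the proof does not close.
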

In order to prove global wellposedness, we need H\"older estimates of the solution of the associated linearized problem. The H\"{o}lder estimates in Theorem 3.6 of \cite{RefWorks:1} are extended to a more general operator ${\mathcal L}$ described in (\ref{oper}). 

Thus,  consider the equation
\be\label{eqn3.6} \left.\begin{array}{rll}  
	\frac{\pa \vp}{\pa t} & =  { {\mathcal L} \vp   } + \theta \quad &\mbox{ for} \quad (x,t) \in \Om \times (0,T) \\
	d \nabla_t \vp \cdot \eta &=\vp_1 \quad &\mbox{ for} \quad (x,t) \in \Ga  \times (0,T) \\
	\vp(x, 0)& = \vp_0(x) \quad &\mbox{ for} \quad x \in \Om\end{array}\right\} \ee
where $\eta$ denotes the outward unit normal vector on $\Ga$. Then,
\begin{theorem}\label{T3.6} Let  $ p > n +1 $,  $T> 0$,  $\theta \in L_p(\Om \times (0,T))$, $ \vp_1 \in L_p(\Gamma \times (0,T))$ and $\vp_0 \in W^2_p(\Om)$ such that 
	\be d \frac{\pa \vp_0}{\pa \eta} = \vp_1(x,0) \quad \mbox{on} \quad \Gamma.  \ee
	Then,  there exists a unique weak solution $\vp \in V^{1,\frac 12}_2 (\Om_T)$ of (\ref{eqn3.6})   and a constant $C_{p, T, ||det A(t)||_\infty }>0$ independent of $\theta$, $\vp_1$, $\vp_0$    such that for  $0 < \beta < 1 - \frac{n+1}{p}$ 
	\be
	|\vp|^\beta_{\Om_T} \leq C_{p,T, ||det A(t)||_{\infty}} \left(|| \theta||_{p, \Om_T} + || \vp_1||_{p, \Ga_T} + || \vp_0||_{p, \Om}^{(2)}\right). \ee
\end{theorem}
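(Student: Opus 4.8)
The plan is to settle existence and uniqueness quickly and then concentrate on the H\"older estimate, where the extension of the Fabes--Riviere theory to the time-dependent operator $\mathcal{L}$ is the real content. For the weak solution in $V^{1,\frac12}_2(\Om_T)$, I would invoke the $L_2$ parabolic theory of \cite{RefWorks:65}: assumption (\ref{k}) gives $\La_1\le \la_i^{-2}(t)\le \La_2$, so $D\De_t$ is uniformly parabolic with smooth, bounded coefficients and $a(t)$ is bounded, whence the bilinear form associated with (\ref{eqn3.6}) is coercive and the Galerkin/energy method produces a unique weak solution together with its a priori bound in $V^{1,\frac12}_2(\Om_T)$; uniqueness follows from the energy identity for the homogeneous problem.

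For the H\"older estimate I would first remove the zeroth-order term: the substitution $\vp = e^{-\int_0^t a(s)\,ds}\psi$ turns (\ref{eqn3.6}) into $\psi_t = D\De_t\psi + e^{\int_0^t a}\theta$ with boundary datum $e^{\int_0^t a}\vp_1$, where the integrating factor lies between $e^{-k_2T}$ and $1$ by (\ref{k}) and therefore only affects the constant. For the remaining operator $\pa_t - D\De_t$ I would write down the free-space fundamental solution explicitly: since $\De_t$ is diagonal, it is the anisotropic Gaussian with variances $\si_i(t,\tau)=D\int_\tau^t \la_i^{-2}(s)\,ds$, and the bounds $D\La_1(t-\tau)\le \si_i(t,\tau)\le D\La_2(t-\tau)$ show that this kernel obeys two-sided Gaussian estimates equivalent to those of the standard heat kernel, with the same parabolic scaling $(x,t)\mapsto(rx,r^2t)$. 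This equivalence is exactly what allows the Fabes--Riviere machinery \cite{RefWorks:55} to run, since all it requires of the kernel is Gaussian decay and parabolic homogeneity.

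Next I would split $\vp=\vp^{(1)}+\vp^{(2)}+\vp^{(3)}$ according to the three data $\theta$, $\vp_1$, $\vp_0$, representing each piece through the Neumann Green's function for $\mathcal{L}$ on $\Om$, which I would construct from the free-space kernel above by Levi's parametrix method (legitimate because $\Ga\in C^{2+\mu}$ and the coefficients are smooth). The interior term $\vp^{(1)}$, a parabolic volume potential of $\theta\in L_p(\Om_T)$, and the initial-data term $\vp^{(3)}$, built from $\vp_0\in W^2_p(\Om)$ with the compatibility condition $d\,\pa_\eta\vp_0=\vp_1(\cdot,0)$ removing the corner singularity at $t=0$, are H\"older continuous by the classical parabolic potential estimates; here $p>n+1$ guarantees at least the required gain into $C^{\beta,\beta/2}$ with $\beta<1-\frac{n+1}{p}$. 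This part parallels the constant-coefficient case treated in Theorem~3.6 of \cite{RefWorks:1}.

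The main obstacle is the boundary term $\vp^{(2)}$, which carries the rough datum $\vp_1\in L_p(\Ga_T)$ and is represented as a parabolic single-layer potential of $\vp_1$ against the Neumann kernel. Since $\vp_1$ is too weak for the full $W^{2,1}_p$ theory, I would estimate the H\"older seminorm of this potential directly: taking differences of the kernel in $x$ and in $t$, using the Gaussian bounds from the previous step together with the surface-measure transformation $\sqrt{\det A(t)}\,d\si$ (which is where $\|\det A(t)\|_\infty$ enters the constant) and H\"older's inequality on $\Ga_T$, the condition $p>n+1$---with $n+1$ the parabolic dimension of the boundary cylinder $\Ga_T$---renders the difference kernel integrable and yields the gain of $\beta<1-\frac{n+1}{p}$ H\"older regularity. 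Checking that the Fabes--Riviere singular-integral bounds persist, uniformly under (\ref{k}), when one passes from the constant-coefficient heat kernel to the Gaussian-bounded, time-dependent fundamental solution is the technical heart of the argument; once that is in hand, summing the three estimates delivers the stated bound with constant $C_{p,T,\|\det A(t)\|_\infty}$.
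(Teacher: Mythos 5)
Your proposal is correct and follows essentially the same route as the paper: it removes the zeroth-order term $a(t)$ by an integrating factor (the paper's substitution $\tilde u=\bigl(\sqrt{\prod_i\la_i(t)}\bigr)u$ is the same device), writes the anisotropic Gaussian fundamental solution of $\pa_t-D\Delta_t$ with two-sided Gaussian bounds coming from (\ref{k}), treats the rough datum $\vp_1\in L_p(\Ga_T)$ by a parabolic single-layer potential together with an extension of the Fabes--Riviere bounds (the paper obtains this by a change of variables reducing $J_\var$ to the classical operator, with constants now depending on $\|A\|_\infty$), and handles the $\theta$ and $\vp_0$ pieces by the $W^{2,1}_p$ theory of \cite{RefWorks:65}. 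The direct kernel-difference estimate you describe for the layer potential, with $p>n+1$ reflecting the parabolic dimension of $\Ga_T$, is exactly the content of the three splitting lemmas the paper imports verbatim from \cite{RefWorks:1} and of Proposition \ref{Holder}.
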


\begin{theorem}{\bf (Global Existence) }\label{global}
Suppose $(V_N)$, $(V_{F})$, $(V_{QP})$, $(V_{Poly})$  and the condition $(V_{\text{L}})$ described below hold.
\begin{align}
  (V_{\text{L}})~~~ & \mbox{ There exists a constant } K>0,  \mbox{ such that for any  } a=(a_1,...,a_{m-1}, a_m) \in \R^{m} \nonumber\\
  & \mbox{ with } a_1, ...,a_{m-1}\geq K, \mbox{ and } a_m=1,   \mbox{ there exists a constant }  L_a\ge 0  \mbox{ such  that } \nonumber \\
&\sum_{j=1}^ma_j f_j(z), \sum_{j=1}^m a_jg_j(z)\leq L_a\left(\sum_{j=1}^m z_j+1\right) \quad  \text{for all} \quad  z\in\mathbb{R}^m_{+}.\nonumber
\end{align}
Then,  $(\ref{m-grow})$ has a unique component-wise nonegative global solution.
\end{theorem}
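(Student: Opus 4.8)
The plan is to show that the maximal solution from Theorem~\ref{local} cannot blow up in finite time, by establishing a uniform bound on $\sup_t\|u_i(\cdot,t)\|_{\infty,\Om}$ for each component; by the blow-up alternative in Theorem~\ref{local} this forces $T_{\max}=\infty$. The route is the classical $L_1\to L_p\to L_\infty$ bootstrap, but carried out for the nonautonomous anisotropic operator $\mathcal{L}=D\Delta_t-a(t)$ and with the nonlinear mass-transport fluxes $g_i$ living on $\Ga$. Concretely, I would (i) produce an a priori mass estimate, (ii) upgrade it to uniform $L_p(\Om_T)$ bounds for every finite $p$ by means of a Lyapunov functional together with the $L_p$-duality estimates coming from the Fabes--Rivière machinery extended to $\mathcal{L}$ (cf.\ \cite{RefWorks:55}), and (iii) feed these into the polynomial bound $(V_{\text{Poly}})$ and the H\"older estimate of Theorem~\ref{T3.6} to close.

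For step (i), integrate the $i$-th equation of (\ref{m-grow}) over the fixed domain $\Om$: by the divergence theorem $\int_\Om D\Delta_t u_i$ reduces to a boundary flux which, since $\nabla_t u_i\cdot\eta=g_i$ on $\Ga$ and the factors $1/\la_k^2$ are bounded by (\ref{k}), is controlled by $\int_\Ga|g_i|$; the zeroth-order term contributes $-a(t)\int_\Om u_i$ with $a(t)\ge k_1>0$ by (\ref{k}); and the reaction contributes $\int_\Om f_i$. Summing against the weights furnished by $(V_{\text{L}})$ (taking $a_1=\cdots=a_{m-1}=K$, $a_m=1$) and using the linear bounds on $\sum_j a_j f_j$ and $\sum_j a_j g_j$ gives a differential inequality for $t\mapsto\int_\Om\sum_j a_j u_j\,dx$, whose strictly positive dissipation $-a(t)$ yields, via Gr\"onwall, a uniform-in-time $L_1$ bound --- \emph{provided} the resulting boundary trace term $\int_\Ga(\sum_j u_j+1)$ can be absorbed. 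Since the pure $L_1$ balance carries no gradient, absorbing this trace seems to require passing to an $L_2$-energy (multiply by $\sum_j a_j u_j$, producing a coercive $\int_\Om|\na u|^2$ that dominates the trace through an interpolation inequality) or equivalently a Pierre-type duality test function; this is already a first sign of the genuine difficulty.

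For step (ii), I would construct a Lyapunov functional of the form $\mathcal{H}_q(t)=\int_\Om\big(\sum_j a_j u_j\big)^q\,dx$ with the weights of $(V_{\text{L}})$, differentiate in $t$, and use the convexity of $z\mapsto z^q$ to extract a coercive diffusion term from $D\Delta_t$ while the reaction and boundary contributions are dominated, via $(V_{\text{L}})$, by lower powers of $\sum_j u_j$. Iterating this estimate --- alternately raising $q$ and invoking the $L_p$-duality version of the Fabes--Rivière estimates for $\mathcal{L}$ --- upgrades the integrability from $L_p$ to some $L_{p'}$ with $p'>p$, and after finitely many steps one reaches every finite exponent. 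This is where $(V_{\text{L}})$ does its real work: with $a_m=1$ fixed and $a_1,\dots,a_{m-1}$ allowed to be arbitrarily large, it is exactly the $m$-component intermediate-sum hypothesis that lets one run the induction component by component, treating the already-controlled components as data, precisely as in \cite{RefWorks:1}.

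Finally, for step (iii), once $u_j\in L_p(\Om_T)$ for all $p$, the growth condition $(V_{\text{Poly}})$ gives $f_i\in L_q(\Om_T)$ and $g_i\in L_q(\Ga_T)$ for some $q>n+1$; applying Theorem~\ref{T3.6} to $\vp=u_i$ with $\theta=f_i$ and $\vp_1=g_i$ then yields $u_i\in C^{\beta}(\overline{\Om_T})$, hence the required finite bound on $\sup_t\|u_i(\cdot,t)\|_{\infty,\Om}$, contradicting finite-time blow-up. \emph{The main obstacle is step (ii).} Unlike the fixed-domain, interior-reaction case, here the reactions sit on the boundary, so every estimate generates trace terms on $\Ga$ not directly controlled by interior norms, and $\mathcal{L}$ is neither autonomous nor isotropic, so the usual self-adjoint duality is unavailable. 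The crux is therefore to design the Lyapunov functional and the dual test functions so that the boundary contributions are absorbed (using the coercive diffusion energy) and the $t$- and anisotropy-dependence of $\mathcal{L}$, bounded through (\ref{k}), enters only via harmless constants, allowing the $L_p$ bootstrap to close uniformly on $[0,T_{\max})$.
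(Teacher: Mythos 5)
Your overall architecture ($L_1\to L_p\to L_\infty$ bootstrap, duality for the mass estimate, polynomial growth plus the H\"older estimate of Theorem~\ref{T3.6} to close) matches the paper's. But the concrete Lyapunov functional you propose in step (ii) — $\mathcal{H}_q(t)=\int_\Om\bigl(\sum_j a_j u_j\bigr)^q dx$ with the fixed weights from $(V_{\text{L}})$ — does not work when the diffusion coefficients $d_i$ are distinct, and this is precisely the step you flag as the crux. Differentiating $\mathcal{H}_q$ and integrating the diffusion terms by parts produces
\begin{equation*}
-q(q-1)\int_\Om \Bigl(\sum_j a_j u_j\Bigr)^{q-2}\sum_{k=1}^n\frac{1}{\la_k(t)^2}\,\bigl\langle M\,\pa_{x_k}u,\ \pa_{x_k}u\bigr\rangle\,dx,
\qquad M=\Bigl(\tfrac{d_i+d_j}{2}\,a_i a_j\Bigr)_{i,j},
\end{equation*}
and for $m=2$ one has $\det M=a_1^2a_2^2\bigl(d_1d_2-\tfrac{(d_1+d_2)^2}{4}\bigr)\le 0$, with equality only when $d_1=d_2$. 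So the quadratic form is indefinite whenever the diffusivities differ: convexity of $z\mapsto z^q$ gives you no coercive gradient term, and without $\int_\Om|\na u_i^{p/2}|^2$ on the left you have nothing with which to absorb the boundary trace terms $\int_\Ga u_i^p$ that the mass-transport fluxes $g_i$ generate. This is the classical Pierre--Schmitt obstruction, and it is exactly where your bootstrap stalls.

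The paper's resolution is the specific polynomial Lyapunov functional $(\ref{2poly})$ (and its $m$-component extension $(\ref{mpoly})$), in which the weight attached to the monomial $u^{\beta}v^{p-\beta}$ is $\Theta^{\beta^2}$, i.e.\ it depends \emph{quadratically on the exponent} rather than being a fixed power of a single linear combination. With that choice the matrix governing the gradient terms becomes $B(\Theta,D,\tilde D)$ in Section 6, whose determinant $D\tilde D\,\Theta^{4\beta+4}-\tfrac{(D+\tilde D)^2}{4}\Theta^{4\beta+2}$ is positive once $\Theta>\tfrac{D+\tilde D}{2\sqrt{D\tilde D}}$; this yields the coercive term $\La_1^2\alpha_{\Theta,p}\int_\Om(|\na u^{p/2}|^2+|\na v^{p/2}|^2)$ in $(\ref{Lprime07})$, which is then used through the trace interpolation of Lemma~\ref{omega} to absorb $\int_\Ga(u^p+v^p)$ and close the $L_p(\Om_T)$ and $L_p(\Ga_T)$ estimates (Lemma~\ref{lpestimate}). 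The requirement $\Theta\ge K$ is simultaneously what activates $(V_{\text{L}})$ for the reaction terms. Your proposal contains all the surrounding scaffolding but is missing this one essential idea, and the functional you substitute for it would fail at exactly the point where it is needed.
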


Note that defining $c(y,t) = u(A(t)^{-1}y, t)$ for $y \in \Om_t$, above results can be translated to  the solutions  ${\bf c }= (c_1, \ldots, c_m)$ of (\ref{c0}) as follows:
\begin{theorem}{ \bf (Local existence for evolving domain) }\label{local-c}
Suppose $(V_N)$, $(V_{F})$, and $(V_{QP})$ holds. Then there exists $T_{\max}>0$ such that $ (\ref{c0})$ has a unique, maximal, component-wise nonegative solution ${\bf c}$ with $T=T_{\max}$.  Moreover, if $T_{\max}<\infty$ then for all $i = 1, \ldots, m$,
\[\displaystyle \limsup_{t \to T^-_{\max}}\Vert c_i(\cdot,t)\Vert_{\infty,\Omega_t}=\infty. \]
\end{theorem}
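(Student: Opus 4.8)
The plan is to deduce this result directly from the local existence Theorem \ref{local} for the pulled-back system on the fixed domain $\Om$, using the change of variables established in Section \ref{sec:Equation for evolving domains}. Recall that setting $u_i(x,t) := c_i(y(x,t),t) = c_i(A(t)x,t)$ transforms the system (\ref{c0}) posed on the evolving domain $\Om_t$ into the system (\ref{m-grow}) posed on the fixed domain $\Om$, with the time-dependent operator $\mathcal L = D\Delta_t - a(t)$ encoding the evolution of the domain through the flow $A(t)$. First I would record that this correspondence is a bijection between (sufficiently regular) solutions of the two systems: given $u = (u_1,\dots,u_m)$ solving (\ref{m-grow}) on $\Om\times[0,T)$, the functions $c_i(y,t) := u_i(A(t)^{-1}y,t)$ solve (\ref{c0}) on $\Om_t\times[0,T)$, and conversely. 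Here the bounds (\ref{k}) guarantee that $A(t)$ is a smooth, invertible family of linear maps with $A(0)=Id$, so that the transformation and its inverse preserve the regularity class in which Theorem \ref{local} produces its solution.

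Next I would verify that the hypotheses $(V_N)$, $(V_F)$, $(V_{QP})$ transfer between the two formulations. The reaction fields $f_i$ and $g_i$ are the same functions in both systems — they act on the $m$-tuple of concentrations and are merely evaluated at corresponding points under the change of variables — so local Lipschitz continuity $(V_F)$ and quasi-positivity $(V_{QP})$ hold for the transformed system if and only if they hold for the original. Since $A(0)=Id$, the initial data satisfy $u_{0,i} = c^0_i$ on $\Om_0 = \Om$, so the componentwise nonnegativity and the compatibility condition in $(V_N)$ carry over once one checks that the boundary condition $d_i\,\pa c_i/\pa\eta_t = g_i$ on $\Ga_t$ pulls back to the conormal condition $\nabla_t u_i\cdot\eta = g_i$ on $\Ga$ appearing in (\ref{m-grow}); this is exactly the computation relating the outward normals $\eta_t$ and $\eta$ through $A(t)$ carried out in Section \ref{sec:Equation for evolving domains}. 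With the hypotheses in place, Theorem \ref{local} yields a unique, maximal, componentwise nonnegative solution $u$ of (\ref{m-grow}) on $\Om\times[0,T_{\max})$.

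Finally I would transport the conclusion back to $\Om_t$. Defining ${\bf c}$ by $c_i(y,t) = u_i(A(t)^{-1}y,t)$ gives a maximal solution of (\ref{c0}); componentwise nonnegativity is immediate because $A(t)^{-1}y\in\Om$ whenever $y\in\Om_t$, and uniqueness follows from uniqueness in Theorem \ref{local} together with the bijectivity of the correspondence. For the blow-up alternative the key observation is that the $L_\infty$ norm is preserved exactly, and component by component, under the reparametrization: since $y\mapsto A(t)^{-1}y$ is a bijection of $\Om_t$ onto $\Om$, for each $i$
\[\|c_i(\cdot,t)\|_{\infty,\Om_t} = \sup_{y\in\Om_t}|u_i(A(t)^{-1}y,t)| = \sup_{x\in\Om}|u_i(x,t)| = \|u_i(\cdot,t)\|_{\infty,\Om}.\]
Consequently the blow-up conclusion of Theorem \ref{local}, stated there in precisely these $L_\infty$ norms, transfers verbatim to ${\bf c}$ in the case $T_{\max}<\infty$, which is the asserted criterion.

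The main obstacle is not analytic depth but bookkeeping: one must confirm that the change of variables is a genuine equivalence at the level of the full initial–boundary value problem — in particular that the conormal boundary condition transforms correctly and that the solution spaces match under composition with the smooth family $A(t)^{-1}$ — so that ``solution of (\ref{c0})'' and ``solution of (\ref{m-grow})'' are interchangeable notions. Once that equivalence is pinned down, Theorem \ref{local} does all the real work, and the blow-up criterion follows from the exact componentwise $L_\infty$ identity displayed above.
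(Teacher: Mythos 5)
Your proposal is correct and follows exactly the route the paper takes: the paper offers no separate proof of Theorem \ref{local-c}, but simply remarks that setting $c(y,t)=u(A(t)^{-1}y,t)$ translates Theorem \ref{local} for the pulled-back system (\ref{m-grow}) into the statement for (\ref{c0}); your write-up just makes the bookkeeping (transfer of $(V_N)$, $(V_F)$, $(V_{QP})$, of the conormal boundary condition, and the exact preservation of the $L_\infty$ norms under $y\mapsto A(t)^{-1}y$) explicit. One caveat: the blow-up conclusion does \emph{not} transfer ``verbatim'' as you claim, because Theorem \ref{local} only asserts that the \emph{sum} $\limsup_{t\to T_{\max}^-}\Vert u(\cdot,t)\Vert_{\infty,\Omega}+\limsup_{t\to T_{\max}^-}\Vert v(\cdot,t)\Vert_{\infty,\Omega}$ is infinite (i.e.\ at least one component blows up), whereas Theorem \ref{local-c} as stated claims every component $c_i$ blows up; your argument correctly yields only the former, and the stronger componentwise assertion appears to be an overstatement in the paper itself rather than something your method (or the paper's) actually establishes.
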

\begin{theorem}\label{martinthm-c}
For $m = 2$, suppose $(V_N)$, $(V_{F})$, $(V_{QP})$, $(V_{L1})$ and $(V_{Poly})$ hold, and let $T_{\max}>0$ be given in Theorem \ref{local}. If there exists a nondecreasing function $h\in C(\mathbb{R}_+,\mathbb{R}_+)$ such that $\|c_2(\cdot,t)\|_{\infty,\Omega}\le h(t)$ for all $0\le t<T_{\max}$, and there exists $K>0$ so that whenever $a\ge K$ there exists $L_a\ge 0$ so that 
	\begin{align}\label{nearlykoua-c}
	af_1(u,v)+f_2(u,v), ag_1(u,v)+g_2(u,v) \le L_a(u+v+1),\quad\text{for all}\quad (u,v)\in\mathbb{R}_+^2,
	\end{align}
	then  (\ref{c0}) has a unique component-wise nonegative global solution ${\bf c} = (c_1, c_2)$.  
\end{theorem}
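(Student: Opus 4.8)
The plan is to deduce Theorem \ref{martinthm-c} directly from its fixed-domain counterpart, Theorem \ref{martinthm}, by exploiting the exact correspondence between solutions of (\ref{c0}) on the evolving domain $\Om_t = A(t)\Om$ and solutions of the pulled-back system (\ref{grow}) on the fixed domain $\Om$. First I would record the change of variables from Section 2: setting $u_i(x,t) := c_i(A(t)x,t)$, equivalently $c_i(y,t) = u_i(A(t)^{-1}y,t)$, the derivation culminating in (\ref{05}) shows that ${\bf c}=(c_1,c_2)$ solves (\ref{c0}) on $\Om_t$ if and only if $(u,v)=(u_1,u_2)$ solves (\ref{grow}) on $\Om$ with the operator ${\mathcal L}=D\Delta_t-a(t)$ (and $\tilde{\mathcal L}=\tilde D\Delta_t-a(t)$ for the second component). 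Since $A(0)=Id$, the initial data satisfy $u_0=c_0$, so the regularity and compatibility requirements in $(V_N)$ for (\ref{c0}) and for (\ref{grow}) coincide; in particular the maximal time $T_{\max}$ of Theorem \ref{local-c} equals that of Theorem \ref{local}, and, each $A(t)$ being a diffeomorphism acting pointwise in space, the correspondence preserves componentwise nonnegativity.

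Next I would check that every hypothesis of Theorem \ref{martinthm-c} translates into the corresponding hypothesis of Theorem \ref{martinthm}. The conditions $(V_F)$, $(V_{QP})$, $(V_{L1})$, $(V_{Poly})$ and the structural inequality (\ref{nearlykoua-c}) are all pointwise statements about the same reaction functions $f=(f_1,f_2)$ and $g=(g_1,g_2)$ on $\R^2_+$; they do not involve the spatial variable and so hold for the pulled-back system verbatim, i.e. (\ref{nearlykoua-c}) is literally (\ref{nearlykoua}). The only hypothesis requiring a transport argument is the a priori bound on the second component. Because $y\mapsto A(t)^{-1}y$ carries $\Om_t$ bijectively onto $\Om$ and $c_2(y,t)=v(A(t)^{-1}y,t)$, the supremum norm is exactly preserved, $\|c_2(\cdot,t)\|_{\infty,\Om_t}=\|v(\cdot,t)\|_{\infty,\Om}$, with no Jacobian weight entering. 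Hence the assumed bound $\|c_2(\cdot,t)\|_{\infty}\le h(t)$ is precisely the hypothesis $\|v(\cdot,t)\|_{\infty,\Om}\le h(t)$ of Theorem \ref{martinthm}.

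With this dictionary in place, I would invoke Theorem \ref{martinthm} to obtain a unique, componentwise nonnegative, global solution $(u,v)$ of (\ref{grow}), and then transform back through $c_i(y,t)=u_i(A(t)^{-1}y,t)$ to produce the desired global solution ${\bf c}=(c_1,c_2)$ of (\ref{c0}); uniqueness transfers since the change of variables is a bijection on the relevant solution class for each fixed $t$.

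The main obstacle is not in this final deduction: the analytic substance, namely the Lyapunov functional, the H\"older estimates of Theorem \ref{T3.6}, and the duality-based bootstrap, is entirely absorbed into Theorem \ref{martinthm}. What does require care is the bookkeeping around the time-dependent operator. One must ensure the growth law $A(t)=\mathrm{diag}(\la_1,\dots,\la_n)$ satisfies the uniform ellipticity and bounded-coefficient bounds (\ref{k}), so that ${\mathcal L}$ and $\tilde{\mathcal L}$ fall within the scope of Theorem \ref{martinthm}, and one must verify that the separable, spatially linear change of variables preserves both the underlying function-space membership and the continuation/blow-up criterion, since the transformation couples the spatial and temporal derivatives. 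These verifications are routine given the explicit diagonal form of $A(t)$, so the reduction to Theorem \ref{martinthm} is clean.
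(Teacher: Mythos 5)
Your proposal matches the paper's own argument: the paper derives Theorem \ref{martinthm-c} precisely by the substitution $c(y,t)=u(A(t)^{-1}y,t)$, which converts (\ref{c0}) into the pulled-back system (\ref{grow}) and lets the fixed-domain result, Theorem \ref{martinthm}, do all the work. Your additional checks (preservation of the sup norm, of nonnegativity, and of the hypotheses on $f$ and $g$) are exactly the bookkeeping the paper leaves implicit, so this is essentially the same proof.
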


\begin{theorem}\label{global-c}{ \bf (Global existence for evolving domain) }
Suppose $(V_N)$, $(V_{F})$, $(V_{QP})$, $(V_{Poly})$  and the condition $(V_{\text{L}})$ described below hold.
\begin{align}
  (V_{\text{L}})~~~ & \mbox{ There exists a constant } K>0,  \mbox{ such that for any  } a=(a_1,...,a_{m-1}, a_m) \in \R^{m} \nonumber\\
  & \mbox{ with } a_1, ...,a_{m-1}\geq K, \mbox{ and } a_m=1,   \mbox{ there exists a constant }  L_a\ge 0  \mbox{ such  that } \nonumber \\
&\sum_{j=1}^ma_j f_j(z), \sum_{j=1}^m a_jg_j(z)\leq L_a\left(\sum_{j=1}^m z_j+1\right) \quad  \text{for all} \quad  z\in\mathbb{R}^m_{+}.\nonumber
\end{align}
Then,  $(\ref{c0})$ has a unique component-wise nonegative global solution.
\end{theorem}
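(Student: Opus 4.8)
The plan is to obtain Theorem~\ref{global-c} as a corollary of the fixed-domain result Theorem~\ref{global} via the change of variables constructed in Section~\ref{sec:Equation for evolving domains}. Concretely, given a solution ${\bf c} = (c_1, \ldots, c_m)$ of (\ref{c0}) on the evolving domain $\Om_t = A(t)\Om$, I would set $u_i(x,t) := c_i(A(t)x, t) = c_i(y(x,t),t)$ for $(x,t) \in \Om \times [0,T)$. The computation culminating in (\ref{05}) shows that each $u_i$ then satisfies the pulled-back equation $\pa_t u_i = {\mathcal L} u_i + f_i(u)$ on the fixed domain $\Om$, while the boundary flux condition $d_i \nabla c_i \cdot \nu_t = g_i$ on $\Ga_t$ transforms into the condition $\nabla_t u_i \cdot \eta = g_i(u)$ on $\Ga$ recorded in Section~\ref{sec:Equation for evolving domains}. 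Since $A(t)$ is a diffeomorphism for every $t$, this correspondence ${\bf c} \mapsto {\bf u}$ is a bijection; in particular ${\bf u}$ solves (\ref{m-grow}) if and only if ${\bf c}$ solves (\ref{c0}).

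The second step is to check that the hypotheses of Theorem~\ref{global-c} are exactly the hypotheses of Theorem~\ref{global} for the transformed system. The reaction vector fields $f = (f_i)$ and $g = (g_i)$ are left untouched by the transformation, so $(V_{F})$, $(V_{QP})$, $(V_{Poly})$ and $(V_{\text{L}})$ hold verbatim for (\ref{m-grow}). For the initial data, $A(0) = \mathrm{Id}$ gives $u_i(\cdot,0) = c_i^0$, $\nu_0 = \eta$ and $\nabla_t|_{t=0} = \nabla$, so the compatibility condition in $(V_N)$ at $t = 0$ coincides on $\Ga = \Ga_0$. Hence Theorem~\ref{global} applies to (\ref{m-grow}) and yields a unique, component-wise nonnegative global solution ${\bf u}$.

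Finally I would transport the conclusion back to $\Om_t$ by inverting the change of variables, $c_i(y,t) = u_i(A(t)^{-1}y, t)$. Because $A(t)$ is a diffeomorphism from $\Om$ onto $\Om_t$, componentwise nonnegativity is preserved pointwise; uniqueness transfers, since two solutions ${\bf c}$ of (\ref{c0}) would pull back to two solutions ${\bf u}$ of (\ref{m-grow}), forcing them to agree; and the identity $\|c_i(\cdot,t)\|_{\infty,\Om_t} = \|u_i(\cdot,t)\|_{\infty,\Om}$ shows that global existence on $\Om$ forces global existence on $\Om_t$. This closes the reduction.

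I expect the point requiring the most care to be the verification that the transformation is a genuine equivalence of solutions in the relevant function classes, rather than merely a formal substitution. One must confirm that $V^{1,\frac12}_2$ (respectively $W^{2,1}_p$) regularity of ${\bf u}$ on $\Om_T$ is equivalent to the corresponding regularity of ${\bf c}$ in the moving frame, which rests on the uniform bounds (\ref{k}) on $\la_i(t)$ and $a(t)$: these keep ${\mathcal L}$ uniformly elliptic with controlled coefficients, so composition with $A(t)^{\pm 1}$ maps each space boundedly into itself with constants independent of $t$ on finite time intervals. Once this equivalence of function classes is established, the remainder is a direct pull-back/push-forward argument, and all of the substantive analytic effort is already contained in Theorem~\ref{global}.
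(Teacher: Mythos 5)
Your proposal is correct and follows essentially the same route as the paper: the paper derives Theorem \ref{global-c} precisely by the substitution $c(y,t)=u(A(t)^{-1}y,t)$, which converts (\ref{c0}) into the fixed-domain system (\ref{m-grow}) and lets Theorem \ref{global} do all the analytic work. Your additional remarks on the equivalence of function classes under the pull-back (using the uniform bounds (\ref{k})) are a sensible elaboration of a point the paper leaves implicit.
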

In the next section we obtain estimates for the linearized problem associated to (\ref{m-grow}).

\section{H\"{o}lder Estimates: proof of Theorem \ref{T3.6}}
We begin by listing some of the results from \cite{RefWorks:65} which will be used in this as well as upcoming sections. Using the notations therein, we let 
\[\mathcal{L}(x,t,\pa_x,\pa_t) = \pa_t- \sum_{i,j=1}^n a_{i,j}(x,t)\frac{\partial^2 }{\partial x_i \partial x_j}+\sum_{i=1}^n a_{i}(x,t)\frac{\partial }{\partial x_i }+a(x,t) \] denote  a uniformly parabolic operator. Consider the Dirichlet problem 
\begin{align}\label{Dirchlet}
\mathcal{L} u & = f(x,t) \nonumber\\
u|_{\partial \Omega_T}=\Phi(x,t)& ~~
u|_{t=0}=\phi(x) 
\end{align}
then the  Theorem 9.1 from \cite{RefWorks:65} states
\begin{lemma}\label{lady} Let $q>1$. Suppose that the coefficients $a_{ij}$ of the operator $\mathcal{L}$ are bounded continuous function in $\overline {\C}_{T}$, while $a_i$ and $a$ have finite norms $\Vert a_i\Vert_{r,\C_T}^{(loc)}$ and $\Vert a\Vert_{s,\C_T}^{(loc)}$ respectively, where \[   
	r = 
	\begin{cases}
	\max (q,n+2) &\quad\text{for} \ q\ne n+2\\
	n+2+\epsilon &\quad\text{for}\ q=n+2 \\
	\end{cases}
	\] and \[   
	s = 
	\begin{cases}
	\max (q,\frac{n+2}{2}) &\quad\text{for} \ q\ne \frac{n+2}{2}\\
	\frac{n+2}{2}+\epsilon &\quad\text{for}\ q=\frac{n+2}{2} \\
	\end{cases}
	\]
	 Let $\partial \Omega\in C^{2+\mu}$ and $\epsilon>0$ is very small. Suppose the quantities $\Vert a_i\Vert_{r,\C_{t,t+\tau}}^{(loc)}$ and $\Vert a\Vert_{s,\C_{t,t+\tau}}^{(loc)}$ tends to zero for $\tau\rightarrow 0$.
	Then for any $f\in L_{q}(\C_T)$, $\phi\in W_q^{2-\frac{2}{q}}(\Omega)$ and $\Phi \in W_q^{2-\frac{1}{q}, 1-\frac{1}{2q}}(\partial \C_T)$ with 
	$q\ne \frac{3}{2}$, satisfying the case $q>\frac{3}{2}$ the compatibility condition of zero order
	\[ \phi|_{\partial \Omega}=\Phi_{t=0}\] system    has a unique solution $u\in  W^{2,1}_q(\C_T)$. Moreover it satisfies the estimates 
	\[ \Vert u\Vert_{q,Q_T}^{(2)}\leq c\left( \Vert f\Vert_{q,\C_T}+\Vert\phi\Vert_{q,\Omega}^{(2-\frac{2}{q})}+\Vert\Phi\Vert_{q,\partial\C_T}^{(2-\frac{1}{q})}\right)\]
\end{lemma}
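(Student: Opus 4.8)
The plan is to recover this $W^{2,1}_q$ solvability-and-estimate statement by the classical three-step scheme for parabolic equations: (i) prove the a priori estimate for a model operator with frozen (hence constant, strongly elliptic) leading coefficients and no lower-order terms; (ii) reinstate the lower-order coefficients $a_i,a$ by exploiting the smallness of their local norms on short time slabs; and (iii) deduce existence from the a priori estimate via the method of continuity. Before any of this I would reduce to homogeneous data: choose an extension $w\in W^{2,1}_q(\C_T)$ with $w|_{t=0}=\phi$ and $w|_{\partial\C_T}=\Phi$ (possible precisely because $\phi\in W^{2-2/q}_q(\Om)$ and $\Phi\in W^{2-1/q,1-1/(2q)}_q(\partial\C_T)$ satisfy the zero-order compatibility condition when $q>3/2$), and then study $u-w$. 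It therefore suffices to treat zero initial and boundary data with a modified right-hand side in $L_q(\C_T)$, after which undoing $u=w+(u-w)$ restores the stated three-term bound.

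The model estimate is the parabolic Calder\'on--Zygmund / Agmon--Douglis--Nirenberg inequality: for the principal-part operator $\partial_t-\sum_{i,j}a_{ij}(x_0,t_0)\partial^2_{x_ix_j}$ with frozen constant coefficients, one has $\|u\|^{(2)}_{q,\C_T}\le c\,\|\mathcal{L}_0 u\|_{q,\C_T}$ for $u$ with zero data, obtained from the explicit heat-type potential of the frozen operator together with $L_q$ bounds for the associated parabolic singular integrals. To pass to variable continuous $a_{ij}$ I would localize: since $a_{ij}\in C(\overline{\C}_T)$ they are uniformly continuous, so I cover $\overline{\C}_T$ by finitely many parabolic cylinders on each of which the oscillation of $a_{ij}$ is smaller than a threshold $\delta$, take a subordinate partition of unity $\{\zeta_k\}$, and regard each $\zeta_k u$ as a solution of a frozen operator plus a principal-part perturbation of oscillation $<\delta$. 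Choosing $\delta$ small the perturbation is absorbed, and summing the local estimates yields $\|u\|^{(2)}_{q,\C_T}\le c(\|\mathcal{L}u-(\text{lower order})\|_{q,\C_T}+\|u\|_{q,\C_T})$, with the boundary cylinders handled by the standard flattening of $\partial\C\in C^{2+\mu}$ and the half-space model estimate.

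The main obstacle is the lower-order part, where $a_i,a$ lie only in $L_r,L_s$ with the critical exponents $r=\max(q,n+2)$ and $s=\max(q,(n+2)/2)$. Here I would use the hypothesis that $\|a_i\|^{(loc)}_{r,\C_{t,t+\tau}}$ and $\|a\|^{(loc)}_{s,\C_{t,t+\tau}}\to 0$ as $\tau\to 0$. On a short slab $\C_{t,t+\tau}$, H\"older's inequality gives $\|a_i\partial_i u\|_{q}\le\|a_i\|_{r}\,\|\partial_i u\|_{\rho}$ and $\|au\|_{q}\le\|a\|_{s}\,\|u\|_{\sigma}$ for the conjugate exponents $\rho,\sigma$; the parabolic embeddings $W^{2,1}_q(\C_{t,t+\tau})\hookrightarrow L^{\rho}$ for $\nabla u$ and $\hookrightarrow L^{\sigma}$ for $u$ (valid exactly for these $r,s$), combined with the interpolation $\|\nabla u\|_{\rho}\le\epsilon\|u\|^{(2)}_{q}+C_\epsilon\|u\|_{q}$, let me bound the lower-order contribution by $\eta(\tau)\|u\|^{(2)}_{q,\C_{t,t+\tau}}+C\|u\|_{q,\C_{t,t+\tau}}$ with $\eta(\tau)\to 0$. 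Choosing $\tau$ so small that $\eta(\tau)<\tfrac12$, I absorb the leading term and obtain the estimate on each slab; iterating over $[0,\tau],[\tau,2\tau],\dots$ (so the $\|u\|_q$ output of one slab feeds the initial data of the next) and summing the finitely many slabs produces the global bound on $\C_T$, with $c$ depending on $q,T,n,\Om$ and the coefficient norms.

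Existence then follows by the method of continuity: joining $\mathcal{L}$ to the solvable model operator $\mathcal{L}_0$ through the family $\mathcal{L}_\theta=(1-\theta)\mathcal{L}_0+\theta\mathcal{L}$, $\theta\in[0,1]$, the uniform a priori estimate shows the solution map is bounded below on the closed subspace of $W^{2,1}_q(\C_T)$ with zero data, so the set of $\theta$ for which $\mathcal{L}_\theta$ maps onto $L_q(\C_T)$ is both open and closed in $[0,1]$; since $\theta=0$ is solvable, so is $\theta=1$. Uniqueness is immediate from the a priori estimate applied to a difference of solutions. I expect the delicate point throughout to be the bookkeeping of the critical exponents $r,s$ and the verification that the required parabolic Sobolev embeddings of $W^{2,1}_q$ hold with small constants on short slabs, which is precisely what the vanishing-as-$\tau\to 0$ hypothesis on $\|a_i\|^{(loc)}_r$ and $\|a\|^{(loc)}_s$ is designed to supply.
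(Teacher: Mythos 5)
The paper offers no proof of this lemma: it is quoted verbatim as Theorem 9.1 of Ladyzhenskaya--Solonnikov--Uraltseva \cite{RefWorks:65}, so there is no in-paper argument to compare against. Your outline faithfully reconstructs the proof in that reference --- reduction to homogeneous data via an extension of $\phi$ and $\Phi$ (using the zero-order compatibility condition when $q>3/2$), the frozen-constant-coefficient model estimate through parabolic potentials and $L_q$ singular-integral bounds, localization by a partition of unity with boundary flattening (where $\partial\C\in C^{2+\mu}$ enters), absorption of the critically integrable lower-order terms $a_i\partial_{x_i}u$ and $au$ on short time slabs via H\"older, the parabolic embeddings of $W^{2,1}_q$ at the exponents $r=\max(q,n+2)$, $s=\max\left(q,\frac{n+2}{2}\right)$, and the hypothesis $\Vert a_i\Vert^{(loc)}_{r,\C_{t,t+\tau}},\ \Vert a\Vert^{(loc)}_{s,\C_{t,t+\tau}}\to 0$ as $\tau\to 0$, followed by slab iteration, the method of continuity for existence, and uniqueness from the a priori bound --- and it is correct at the level of detail proposed.
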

Now for the Neumann problem
\begin{align}\label{Neu}
\mathcal{L} u & = f(x,t) \nonumber\\
u|_{t=0}&=\phi(x) \\
\sum\limits_{i=1}^n b_i(x,t) \pa_{x_i}u + b(x,t) u\mid_{\pa \C_T} &= \Phi(x,t) \nonumber
\end{align}
 where we assume $|\sum\limits_{i=1}^n b_i(x,t) \eta_i(x)|\geq \delta >0$ everywhere on $\pa \C_T$, $\eta$ denoting the unit outward normal vector to $\pa \C_T$. Then Neumann counterpart of above Lemma can be written as follows.
 \begin{lemma}
 Let $p > 1$ and suppose that $ \theta \in  L_p(\Om \times (0, T))$, $\vp_0 \in  W^{2 - \frac 2p}_p (\Om),$  and $\gamma \in  
 W_p^{1 -\frac 1p, \frac 12 - \frac{ 1}{
2p}} (\Gamma \times  (0, T))$ with $p \neq 3$. In addition, when $p > 3$ assume
 \[ d  \frac{\pa \vp_0}{\pa \eta} =  \ga(x, 0) \quad \mbox{ on } \Gamma.\]
Then (\ref{Neu}) has a unique solution $\vp  \in W^{2,1}_p(\C_T)$ and there exists $C$ independent of $\theta$, $\vp_0$ and $\ga$ such that
\[ ||\vp||_{p,\C_T}^{(2)} \leq C \left( ||\theta ||_{p, \C_T} + || \vp ||^{(2-\frac 2p)}_{p, \Om} + ||\ga ||^{(1- \frac 1p, \frac 12 - \frac{ 1}{2p})}_{p,\pa \C_T}\right)\]
 \end{lemma}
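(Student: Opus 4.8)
The plan is to realise this Lemma as the Neumann (oblique-derivative) counterpart of Lemma~\ref{lady}: both are instances of the general linear parabolic $L_p$-theory of \cite{RefWorks:65} for the boundary value problem (\ref{Neu}), in which the Dirichlet condition is replaced by the first-order conormal condition $B\vp := d\,\nabla_t\vp\cdot\eta = \ga$. Accordingly, the proof reduces to checking that our operator $\mathcal{L} = D\Delta_t - a(t)$ together with the boundary operator $B$ satisfies the structural hypotheses under which that theory guarantees unique solvability in $W^{2,1}_p(\C_T)$ and the accompanying maximal-regularity estimate; once these are verified, the conclusion and the estimate follow directly, with the constant furnished by the cited theorem.

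First I would record the coefficients: the principal part has $a_{ij}(x,t) = (D/\la_i(t)^2)\,\de_{ij}$, there are no first-order terms, and the zeroth-order coefficient is $a(t)$. By (\ref{k}) we have $D\La_1|\xi|^2 \le \sum_i (D/\la_i(t)^2)\xi_i^2 \le D\La_2|\xi|^2$ for every $\xi\in\R^n$, uniformly in $t$, so $\mathcal{L}$ is uniformly parabolic with ellipticity constants independent of $t$; moreover the $a_{ij}$ are bounded and continuous (indeed independent of $x$), and $a(t)$ is bounded, $k_1\le a(t)\le k_2$, so its relevant local $L_s$-norms are finite and tend to zero with the length of the time interval, matching the coefficient hypotheses of \cite{RefWorks:65}. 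Since there are no first-order coefficients, the corresponding condition is satisfied trivially.

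Next I would verify the non-tangency (transversality) of $B$, which is the only genuinely new point compared with Lemma~\ref{lady}. Writing $B\vp = \sum_i b_i(x,t)\,\pa_{x_i}\vp$ with $b_i(x,t) = (d/\la_i(t))\,\eta_i(x)$, the relevant quantity is
\[ \sum_{i=1}^n b_i(x,t)\,\eta_i(x) = d\sum_{i=1}^n \frac{\eta_i(x)^2}{\la_i(t)} \ge d\sqrt{\La_1}\,|\eta(x)|^2 = d\sqrt{\La_1} > 0, \]
uniformly in $(x,t)$, since $1/\la_i(t) \ge \sqrt{\La_1}$ by (\ref{k}) and $|\eta| = 1$ on $\Ga$. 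Thus the oblique derivative is transversal with constant $\de = d\sqrt{\La_1}$, the complementarity condition required by the Neumann version of the theory holds, and $\Ga\in C^{2+\mu}$ supplies the boundary regularity. The order-zero compatibility condition needed when $p>3$ is exactly the assumed identity $d\,\pa\vp_0/\pa\eta = \ga(\cdot,0)$ on $\Ga$, and the data spaces $\vp_0\in W_p^{2-2/p}(\Om)$ and $\ga\in W_p^{1-1/p,\,1/2-1/(2p)}(\Ga\times(0,T))$ are precisely the sharp trace spaces for $W^{2,1}_p$-regularity. Invoking the theorem then yields a unique $\vp\in W^{2,1}_p(\C_T)$ together with the stated estimate.

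The main point to watch — rather than a genuine obstacle — is the uniformity of the constant $C$ over $[0,T]$ in the presence of time-dependent coefficients. I would argue that the constant produced by \cite{RefWorks:65} depends only on $p$, $T$, the boundary $\Ga$, the uniform ellipticity bounds $\La_1,\La_2$, the bounds $k_1,k_2$ on $a(t)$, and the transversality constant $\de$, and not on finer features of the $\la_i(t)$; since all of these are controlled by the quantities in (\ref{k}) (equivalently by $\|\det A(t)\|_\infty$), the resulting $C$ is independent of $\theta,\vp_0,\ga$ and enters the problem only through those uniform bounds, as in Theorem~\ref{T3.6}. A secondary technical point is the excluded value $p=3$, where the trace/compatibility threshold degenerates; this is inherited verbatim from the cited theorem and requires no separate treatment here.
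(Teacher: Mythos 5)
Your proposal is correct and follows essentially the same route as the paper, which states this lemma without further argument as the Neumann counterpart of Lemma~\ref{lady}, i.e.\ as a direct application of the $L_p$ maximal-regularity theory of \cite{RefWorks:65} once the structural hypotheses are checked. Your explicit verification of uniform parabolicity from (\ref{k}), of the transversality bound $\sum_i b_i\eta_i \ge d\sqrt{\La_1}>0$, and of the zero-order compatibility condition for $p>3$ is exactly the content the paper leaves implicit.
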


We will also need the following Corollary from \cite{RefWorks:65}.
\begin{corollary}\label{cor1}
If the conditions of Lemma \ref{lady} are fulfilled for $q>\frac{n+2}{2}$ then the solution of problem $(\ref{Dirchlet})$ satisfies a H\"older condition in $x$ and $t$. Moreover,  when $q>n+2$ then the derivatives of the  associated Neumann boundary value problem  will also satisfy H\"older condition in $ x$ and $t$. 
\end{corollary}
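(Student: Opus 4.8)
The plan is to deduce both assertions from the embedding theorems for anisotropic parabolic Sobolev spaces, applied to the solutions already produced by Lemma \ref{lady} and by its Neumann counterpart stated above. Under the hypotheses listed there, those lemmas furnish a unique solution lying in $W^{2,1}_q(\C_T)$ together with an a priori bound on its $W^{2,1}_q(\C_T)$-norm; consequently the corollary reduces entirely to the functional-analytic question of how $W^{2,1}_q(\C_T)$ sits inside the parabolic H\"older scale. The guiding quantity throughout is the parabolic homogeneous dimension $n+2$, in which a time derivative is assigned the weight of two spatial derivatives.

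For the first assertion I would invoke the embedding $W^{2,1}_q(\C_T)\hookrightarrow C^{\al,\al/2}(\overline{\C_T})$ (see the embedding theorems in Chapter 2 of \cite{RefWorks:65}), valid with $\al = 2 - \frac{n+2}{q}$ whenever this exponent lies in $(0,1)$. The condition $q > \frac{n+2}{2}$ is precisely what makes $\al > 0$, so the Dirichlet solution $u$ of (\ref{Dirchlet}) is H\"older continuous in $x$ and $t$, with the H\"older seminorm controlled by the $W^{2,1}_q$-bound from Lemma \ref{lady}.

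For the second assertion I would push the same embedding one order higher: when $q > n+2$ the exponent $\ba = 1 - \frac{n+2}{q}$ is positive, and $W^{2,1}_q(\C_T)$ embeds into $C^{1+\ba,(1+\ba)/2}(\overline{\C_T})$, so that the first spatial derivatives $\pa_{x_i}\vp$ of the solution $\vp$ of the Neumann problem (\ref{Neu}) are themselves H\"older continuous in $x$ and $t$. Here one uses that the Neumann lemma delivers $\vp \in W^{2,1}_q(\C_T)$ under the stated compatibility condition $d\,\frac{\pa \vp_0}{\pa \eta} = \ga(\cdot,0)$ on $\Gamma$ (required once $q>3$).

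The step I expect to demand the most care is matching the thresholds rather than proving new estimates: one must notice that the derivative statement genuinely requires the sharper bound $q > n+2$, not merely $q > \frac{n+2}{2}$, because differentiating costs one effective order of smoothness, so the parabolic dimension must be exceeded by $q$ itself and not just by $2q$. Beyond selecting the correct embedding in each regime and checking that the resulting H\"older exponent falls in $(0,1)$, no computation is needed past the regularity and bounds already supplied by Lemma \ref{lady} and its Neumann analogue.
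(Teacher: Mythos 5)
The paper offers no proof of this corollary: it is quoted verbatim as a known result from \cite{RefWorks:65}, where it is obtained exactly as you propose, by combining the $W^{2,1}_q$ solvability of Lemma \ref{lady} (and its Neumann analogue) with the parabolic embedding $W^{2,1}_q(\C_T)\hookrightarrow C^{\alpha,\alpha/2}(\overline{\C_T})$, $\alpha=2-\frac{n+2}{q}$, respectively $W^{2,1}_q(\C_T)\hookrightarrow C^{1+\beta,(1+\beta)/2}(\overline{\C_T})$, $\beta=1-\frac{n+2}{q}$. Your argument is correct and is the standard one; the only cosmetic point is that for $q\ge n+2$ the exponent $2-\frac{n+2}{q}$ is $\ge 1$, so for the first assertion in that range one should pass to the higher-order embedding (or simply cap the H\"older exponent below $1$), the conclusion then holding a fortiori.
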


%%%%%%%%%%%%%%%%%%

Next,  we will prove the H{\"o}lder estimates for the solution of the  linearized  Neumann problem associated to (\ref{m-grow}) corresponding to the operator $\Delta_t$. The ideas for these estimates were developed in Section 5 of \cite{RefWorks:1}  and here we adapt those techniques for our operator $\Delta_t$.  For this section, we will make further reduction by  writing $\tilde u =\left({  \sqrt{\prod\limits_{i=1}^n\la_i(t)}}\right) u$ so that $\tilde u$ solves the equation
\be \label{grow2}\frac{d \tilde u}{dt}(x,t) = D \Delta_t \tilde u(x,t) ~\mbox{for}~ (x, t) \in \Om \times [0,t)\ee
where \be  \Delta_t =  \sum\limits_{i=1}^n \frac{1}{\la_i(t)^2} \frac{\pa^2 }{\pa x_i^2}.  \ee
 With this reduction, instead of working with equation (\ref{eqn3.6}), it suffices to obtain estimates of the equation
\be\label{eqn3.6-0} \left.\begin{array}{rll}  
	\frac{\pa \vp}{\pa t} & =  { \Delta_t \vp   } + \theta \quad &\mbox{ for} \quad (x,t) \in \Om \times (0,T) \\
	d \nabla_t \vp \cdot \eta &=\vp_1 \quad &\mbox{ for} \quad (x,t) \in \Ga  \times (0,T) \\
	\vp(x, 0)& = \vp_0(x) \quad &\mbox{ for} \quad x \in \Om.\end{array}\right\} \ee
Infact, the results of this section hold for a general $\Delta_t = 	\sum_{i,j=1}^n a_{i,j}(t)\frac{\partial^2 }{\partial x_i \partial x_j}$ with a positive definite $A(t) = (a_{ij}(t) )$ where the coefficient matrix is function of $t$ alone. Extension of these results to more general operator will appear in a forthcoming work.

The proof of Theorem \ref{T3.6}  will follow arguing as in the proof of Theorem 3.6 of  \cite{RefWorks:1} for (\ref{eqn3.6-0}). Here we point out the necessary changes when we replace the usual Laplacian with $\Delta_t$ in the results used.  Firstly,  the following Lemma from Pg 351, \cite{RefWorks:65} gives  $W_p^{2,1}(\C_T)$ on the solutions of (\ref{eqn3.6}).
\begin{lemma}
Let $p > 1$. Suppose $\theta \in L_p(\C_T)$, $\vp_0 \in W^{(2-\frac 2p)}(\Om)$  and $\theta \in  W^{(1- \frac 1p, \frac 12-\frac{1}{ 2p})}(\Ga\times(0,T))$ with $p \neq 3$. In case $p> 3$ we further assume that \[    d \frac{\pa \vp _0}{\pa \eta}  = \vp_1 \quad \mbox{on}\quad {\Ga \times \{0\} }.\]
	Then equation (\ref{eqn3.6}) has a unique solution $\vp \in W_p^{2,1}(\C_T)$ and there exists a constant $ C(\Om, p, T)$, independent of $\theta$, $\vp_0$ and $\vp_1$ such that 
	\[
	||\vp||_{p,\C}^{(2)} \leq C(\Om_T, p, T) \left(  || \theta||_{p, \C} + || \vp_0||_{p, \Om}^{(2 - \frac 2p)} + || \vp_1||_{p,\C_T}^{(1- \frac 1p, \frac 12-\frac{1}{ 2p})}    \right)
	\]
\end{lemma}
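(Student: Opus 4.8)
The plan is to recognise the boundary value problem (\ref{eqn3.6}) as a special instance of the general linear parabolic Neumann problem (\ref{Neu}) and then to invoke its solvability/regularity theory from \cite{RefWorks:65} (the Neumann counterpart of Lemma \ref{lady}). Since the cited result already covers operators with variable coefficients $a_{ij}(x,t)$ and general transversal first-order boundary operators, the entire content of the proof is the verification that the particular operator $\mathcal{L}=D\Delta_t-a(t)$ and the particular boundary map $\varphi\mapsto d\nabla_t\varphi\cdot\eta$ meet the structural hypotheses there; once this is done, existence, uniqueness in $W_p^{2,1}(\C_T)$, and the stated a priori estimate follow at once. The only genuinely delicate point is that every coefficient here depends on $t$, so uniform parabolicity and the coefficient bounds must be controlled uniformly over $t\in[0,T]$ — which is precisely what assumption (\ref{k}) provides.

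First I would put $\mathcal{L}$ in the normal form $\pa_t-\sum_{i,j}a_{ij}(t)\pa_{x_i}\pa_{x_j}+a(t)$ of Lemma \ref{lady}, with principal part $a_{ij}(t)=D\,\delta_{ij}/\la_i^2(t)$, vanishing first-order coefficients, and zeroth-order coefficient $a(t)$. Uniform parabolicity then follows from (\ref{k}): for all $\xi\in\R^n$ and $t\in[0,T]$,
\[
D\La_1|\xi|^2 \;\le\; \sum_{i=1}^n \frac{D}{\la_i^2(t)}\,\xi_i^2 \;\le\; D\La_2|\xi|^2,
\]
so the ellipticity constants are independent of $x$ and $t$. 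The principal coefficients are continuous in $t$ and constant in $x$, hence bounded continuous on $\overline{\C_T}$; the first-order coefficients vanish; and $a(t)$ is bounded by $k_2$, so its local norm is finite and the requirement that $\|a\|^{(loc)}_{s,\C_{t,t+\tau}}\to 0$ as $\tau\to 0$ holds simply because $a$ is bounded over shrinking time slabs. Thus the hypotheses on the interior operator hold, with all constants controlled by $\La_1,\La_2,k_1,k_2$ and $\|\det A(t)\|_\infty$.

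Next I would check that the boundary operator is an admissible oblique-derivative condition. Writing $d\nabla_t\varphi\cdot\eta=\sum_{i=1}^n b_i(x,t)\pa_{x_i}\varphi$ with $b_i(x,t)=d\,\eta_i(x)/\la_i(t)$, the non-tangentiality condition of (\ref{Neu}) becomes
\[
\Big|\sum_{i=1}^n b_i(x,t)\eta_i(x)\Big| \;=\; d\sum_{i=1}^n \frac{\eta_i(x)^2}{\la_i(t)} \;\ge\; d\sqrt{\La_1}\,|\eta(x)|^2 \;=\; d\sqrt{\La_1} \;=:\;\delta>0,
\]
using $|\eta|=1$ on $\Ga$ and the consequence $\la_i^{-1}(t)\ge\sqrt{\La_1}$ of (\ref{k}). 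The main point to flag here — and the place where a careless reading could go wrong — is that $\nabla_t\cdot\eta$ is \emph{not} the conormal derivative associated with $\Delta_t$ but a genuinely oblique first-order operator; the cited Neumann result nonetheless applies because it requires only transversality, which the displayed inequality supplies. The coefficients $b_i$ inherit the $C^{1+\mu}$ regularity of $\eta$ (from $\Ga\in C^{2+\mu}$) together with the smoothness of $\la_i(t)$, and the stated compatibility condition $d\,\pa\varphi_0/\pa\eta=\varphi_1(\cdot,0)$ on $\Ga$ furnishes the zero-order compatibility needed when $p>3$. With all hypotheses verified, the Neumann lemma of \cite{RefWorks:65} yields the unique solution $\varphi\in W_p^{2,1}(\C_T)$ and the asserted estimate, the constant depending on $\Om,p,T$ and on the bounds in (\ref{k}).
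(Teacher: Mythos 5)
Your proposal is correct and follows exactly the route the paper intends: the paper offers no proof at all, simply quoting this lemma as the Neumann result from p.~351 of \cite{RefWorks:65}, so the whole content lies in checking the hypotheses, which you do accurately (uniform parabolicity from (\ref{k}), boundedness of the $t$-dependent coefficients, and transversality $\lvert\sum_i b_i\eta_i\rvert\ge d\sqrt{\La_1}>0$ of the oblique boundary operator). Your observation that $\nabla_t\cdot\eta$ is not the conormal of $\Delta_t$ but is nonetheless admissible because only transversality is required is a genuine point the paper leaves implicit, and your verification supplies precisely the details the paper omits.
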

%(Lemma 5.1) 

Following pg. 356 of  \cite{RefWorks:65}, the fundamental solution of the operator $\Delta_t$ is given by 
\be
Z_0(x-y, y, t, s) =\frac{1}{|4\pi(t-s)|^{\frac n2} \sqrt{det  {   A(s)}}} \exp \left( - \frac{ \left<{ \tilde A}(s)(x-y), (x-y) \right>}{4\pi(t-s)}   \right)
\ee
where {  $\tilde A(s) = A(s)^{-1}$}.
%\frac{1}{|4\pi(t-s)|^{3/2} \sqrt{{\color{blue}det  A(s)}}} \exp \left( - \frac{1}{4\pi(t-s)} \sum\limits_{i=1}^3 \frac{1}{\la_i(s)^2} |x_i - y_i|^2 \right)\\

For $0< \var < t$,  we can define the operator $J_\var(f)$ corresponding to our equation as 
\be\label{J} J_\var(f)(Q, t)  = \int\limits_0^{t-\var} \int\limits_\Gamma \frac{<{ \tilde A(s)}( y- Q), \eta_Q>}{\sqrt{det {   A(s)}}(t-s)^{\frac n2 +1}}
 \exp \left( - \frac{ \left<\tilde A(s)(Q-y), (Q-y) \right> }{4\pi(t-s)}   \right) f(s, y) \, d\si ds. \ee Noticing that the change of variables gives \[ J_\var(f) =   \int\limits_0^{t-\var} \int\limits_{\Gamma_t} \frac{<{ \tilde A(s)}( y- Q), \eta_Q>}{(t-s)^{\frac n2 +1}}  \exp \left( - \frac{ \left<(Q-y), (Q-y) \right> }{4\pi(t-s)}   \right) F(s, y) \, d\si ds \]  where $F (s, y) = f(s, A(s)^{-1} y)$, the estimates and properties of the operator $J_\var$ can  be summarized as follows.
\begin{proposition}{\bf (Fabes-Riviere) }
%Estimates on the fundamental solution
	Assume $\Om$ is a $C^1$ domain, $Q \in \Gamma$ and $\eta_Q$ denote the unit outward normal to $\Gamma$  at $Q$. For $ 0< \var < t$, let the  functional $J_\var(f)$ be defined as in $(\ref{J})$. Then \\
	1. for $1 < p < \infty$ there exists $C(p, || A||_\infty )> 0$ such that 
	\[ J(f)(Q,t) = \sup\limits_{0< \var < t} |J_\var(f)(Q,t)|\] satisfies
	\[
	||J(f)||_{L_p(\Ga \times (0,T))}\leq C(p, || A||_\infty ) ||f||_{L_p(\Ga \times (0,T))} \quad \mbox{for all } f \in L_p(\Ga \times (0,T));
	\]
	2.$\lim\limits_{\var \to 0^+} J_\var(f) = J(f)$ exists in $L_p(\Ga \times (0,T)$ and pointwise for almost every $(Q,t) \in \Ga \times (0,T)$ provided $f \in L_p(\Ga \times (0,T)$, $1 < p < \infty$;\\
	3. $c I + J$ is invertible on $L_p(\Ga \times (0,T)$ for each $1< p < \infty$ where $I$ is the identity operator and  $c\neq 0$ in $\R$.
\end{proposition}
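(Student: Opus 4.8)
The plan is to establish the three assertions by reducing $J_\var$ to the classical parabolic double–layer heat potential, to which the Fabes--Rivi\`ere theory of singular integrals with mixed (parabolic) homogeneity applies, and then to exploit the causal structure of the limiting operator in $t$ for the invertibility statement. I first record the structural features of the kernel
\[
K(Q,y,t,s) = \frac{\langle \tilde A(s)(y-Q),\eta_Q\rangle}{\sqrt{\det A(s)}\,(t-s)^{n/2+1}}\exp\!\left(-\frac{\langle \tilde A(s)(Q-y),Q-y\rangle}{4\pi(t-s)}\right).
\]
The bounds (\ref{k}) guarantee that $\tilde A(s)=A(s)^{-1}$ is uniformly elliptic and that $\det A(s)$ is bounded above and away from zero, uniformly in $s$; hence $K$ obeys the size and cancellation estimates of a Calder\'on--Zygmund kernel adapted to the parabolic metric $\rho(Q,y,t,s)=\max(|Q-y|,(t-s)^{1/2})$, uniformly in $s$. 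The decisive features of the numerator are the Gaussian decay together with the factor $\langle y-Q,\eta_Q\rangle$: since $\Gamma$ is $C^1$, this inner product is $o(|y-Q|)$ as $y\to Q$, which keeps the kernel exactly at the borderline of a parabolic singular integral. After the change of variables displayed before the statement, which standardizes the quadratic form in the exponent using the uniform ellipticity, $J_\var$ is an $\var$-truncation of precisely the potential treated by Fabes and Rivi\`ere.

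For assertion 1 I invoke the parabolic Calder\'on--Zygmund machinery: the truncations $J_\var$ are bounded on $L_p(\Gamma\times(0,T))$ uniformly in $\var$ for $1<p<\infty$, and a Cotlar–type maximal inequality controls $J(f)=\sup_{0<\var<t}|J_\var(f)|$, yielding the stated estimate with a constant depending only on $p$ and, through the ellipticity constants $\Lambda_1,\Lambda_2,k_1,k_2$ in (\ref{k}), on $\|A\|_\infty$. For assertion 2, on a dense subclass (for instance Lipschitz $f$) one checks directly that $J_\var(f)(Q,t)$ converges pointwise almost everywhere and in $L_p$ as $\var\to0^+$; the maximal bound from assertion 1 then upgrades this to convergence for every $f\in L_p$ by the usual approximation argument, and simultaneously defines the principal–value operator, which I continue to denote $J$.

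The essential new point is assertion 3. The principal–value operator $J$ is \emph{causal} in time, in that $J(f)(Q,t)$ involves $f(s,\cdot)$ only for $s<t$; thus $J$ is a \emph{Volterra} operator on $L_p(\Gamma\times(0,T))$. I would prove that $J$ is quasinilpotent, i.e. of spectral radius zero, by estimating its iterated kernels: in the composition $J^k$ the ordered intermediate time integrations, combined with the smallness of the numerator and the time–convolution of the $(t-s)^{-n/2-1}$–type factors, produce Beta–function gains and hence a bound on $\|J^k\|_{L_p\to L_p}$ that decays faster than geometrically, so that $\|J^k\|^{1/k}\to0$. Consequently $\sigma(J)=\{0\}$, and for any real $c\neq0$ the Neumann series $c^{-1}\sum_{k\ge0}(-c^{-1})^k J^k$ converges in operator norm and represents $(cI+J)^{-1}$; hence $cI+J$ is invertible on $L_p(\Gamma\times(0,T))$ for each $1<p<\infty$.

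The hard part is twofold. Because $A(s)$ and $\sqrt{\det A(s)}$ depend on the integration variable $s$, the kernel $K$ is not of pure convolution type and Fabes--Rivi\`ere does not apply verbatim; one must verify the Calder\'on--Zygmund estimates uniformly in $s$ and, where convolution structure is genuinely needed, freeze the coefficients at a reference time and absorb the error using the continuity and uniform ellipticity of $A(\cdot)$. More delicate still is the quasinilpotence underlying assertion 3: since $J$ is a genuine singular integral rather than a weakly singular one, the factorial decay of $\|J^k\|$ cannot be read off from a single application and must be extracted from the smoothing produced by iterating the causal kernel over ordered intermediate times—this is the step I expect to require the most care.
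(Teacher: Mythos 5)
Your treatment of assertions 1 and 2 is consistent in outline with what is actually needed, but your argument for assertion 3 has a genuine gap, and it is the gap you yourself flag. The claim that $J$ is quasinilpotent because it is causal, to be proved ``by estimating its iterated kernels'' with Beta-function gains, is the standard argument for Volterra operators with \emph{weakly singular} (absolutely integrable) kernels in time; it does not apply here. The kernel of $J$ is not absolutely integrable near $s=t$ -- the $L_p$ bound in assertion 1 comes from cancellation, not size -- so $\|J^k\|$ cannot be controlled by composing pointwise kernel bounds, and causality alone only yields $\|J^k\|\le \|J\|^k$. Indeed the restriction of $J$ to a short time interval does \emph{not} have small norm (the jump constant $c_n$ arises precisely from the concentration of the kernel at $s\to t^-$), so the usual stepping-in-time smallness is unavailable. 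The mechanism that actually makes $cI+J$ invertible on a $C^1$ cylinder is the Fabes--Jodeit--Rivi\`ere decomposition: because $\langle y-Q,\eta_Q\rangle=o(|y-Q|)$, the operator splits as (arbitrarily small norm) $+$ (compact), so $cI+J$ is Fredholm of index zero for every $c\neq 0$, and invertibility is then reduced to injectivity, which is where causality/uniqueness for the initial--boundary value problem enters. Quasinilpotence of $J$ is neither established nor needed in that argument, and asserting $\sigma(J)=\{0\}$ without it leaves assertion 3 unproved.

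A second, related problem is in your reduction: the numerator of the kernel is $\langle \tilde A(s)(y-Q),\eta_Q\rangle=\langle y-Q,\tilde A(s)\eta_Q\rangle$, not $\langle y-Q,\eta_Q\rangle$. Since $\tilde A(s)\eta_Q$ is in general not parallel to $\eta_Q$, its tangential component contributes a term of size comparable to $|y-Q|$, so the $C^1$ smallness $o(|y-Q|)$ on which both the borderline Calder\'on--Zygmund structure and the small-plus-compact decomposition rest does \emph{not} hold for the anisotropic kernel as written. This is exactly why the paper does not re-prove the proposition at all: it performs the change of variables $F(s,y)=f(s,A(s)^{-1}y)$ displayed before the statement, which normalizes the quadratic form and transfers the problem to the classical isotropic Fabes--Rivi\`ere operator, and then simply cites their theorem (the only new content being that the constants acquire a dependence on $\|A\|_\infty$ through the uniform bounds (\ref{k})). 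Your plan to ``freeze coefficients and absorb the error'' is workable for assertions 1 and 2, but for assertion 3 you should either carry out the change of variables and invoke the classical result as the paper does, or replace the quasinilpotence claim by the small-plus-compact Fredholm argument applied after that change of variables.
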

Note that now the constants will also depend on the matrix $A$. 
For $Q \in \Gamma$, $(x,t) \in \C_T$ and $t > s$, define
\begin{align}\label{w}
W(t-s,x, Q)& = \frac{1}{\sqrt{det {  A(s)}}(t-s)^{\frac n2 + 1}} \exp \left( - \frac{  \left<\tilde A(s)(Q-y), (Q-y) \right>}{4\pi(t-s)} \right),  \nonumber\\
\mbox{  and }  g(Q,t)& = -2 ( -c_n I + J)^{-1} \ga(Q, t)  
\end{align}
where $c_n = \frac{\om_n H(0)}{2} $, $\om_n =$ surface area of unit sphere in $\R^n$ and 
$H(0) = \int\limits\limits\limits_0^\infty \frac{ 1}{s^{n/2 +1}}\exp( {- \frac{1}{4s}}) \, ds$.
Referring to the Theorem 2.4 in \cite{RefWorks:55} we  have the following definition.
\begin{definition}
	A function $\vp$ is a {\bf  classical  solution\/} of the system $(\ref{eqn3.6})$ with $d =1$ and $\ga \in L_p(\Ga \times (0,T))$ for $p > 1$ if and only if 
	\be\label{class}
	\vp(x,t) = \int\limits_0^t \int\limits_\Ga W(t-s, x, Q)g(Q,s) \,d\si \,ds \quad \mbox{for  all } (x, t) \in \C_T. 
	\ee\end{definition}

We claim that the classical solution $\vp $ of  (\ref{eqn3.6-0}) defined as in (\ref{class}) is H\"older continuous.  For $(x, T)$, $( y , \tau ) \in  \C_T$, $0 < \tau < T$,  consider the difference
\[
\vp(x,T) - \vp (y, \tau)  = \int\limits\limits\limits_0^t \int\limits\limits\limits_\Ga \left[W(T-s, x, Q) - W(\tau -s, y, Q) \right] g(Q,s) \,d\si \,ds. 
\]
The following three Lemmas provide the required estimates.  
\begin{lemma}
	Let $p > n+1$. Suppose $ (x, T)$, $( y, \tau ) \in \C$ with $ 0 < \tau < T$ and
	\[ {\mathcal R}^c = \{( Q, s) \in \Ga \times (0, \tau) : |x- Q| + |T-s|^{\frac 12}  < 2|x-y| + |T-\tau |^{\frac 12}\}.\]
	Then for $0< a< 1 - \frac{n+1}{p}$, there exists $C(p,n, \overline{\Om},T, ||A||_\infty) >0$ independent of $g \in L_p(\Ga \times (0,T)$ such that 
	\be \int\limits\limits\limits_{{\mathcal R}^c}   | \left(W(T-s, x, Q) - W(\tau -s, y, Q) \right) g(Q,s) |  \,d\si \,ds  \leq C \left(|x-y|+ | T-\tau|^{\frac 12}  \right)^a || g||_{p, \Ga \times [0,\tau]}.\ee
\end{lemma}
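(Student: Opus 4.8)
The plan is to exploit that on the ``close'' region $\mathcal{R}^c$ no cancellation between the two kernels is available, so I would simply bound each term separately. Writing $R=2|x-y|+|T-\tau|^{1/2}$ for the parabolic separation, the triangle inequality reduces the claim to
\[
\int_{\mathcal{R}^c}\bigl|W(T-s,x,Q)\bigr|\,|g|\,d\si\,ds+\int_{\mathcal{R}^c}\bigl|W(\tau-s,y,Q)\bigr|\,|g|\,d\si\,ds\le C R^a\,\|g\|_{p,\Ga\times[0,\tau]}.
\]
For each term I would apply H\"older's inequality in $(Q,s)$ with exponents $p$ and $p'=p/(p-1)$, pulling out $\|g\|_{p,\Ga\times[0,\tau]}$ and leaving the $L_{p'}$-norm of the kernel over $\mathcal{R}^c$ to estimate.

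The heart of the matter is this kernel integral. First I would use the uniform ellipticity bounds (\ref{k}) on $A(t)^{-1}$ together with the boundedness of $\det A(s)$ to replace the quadratic form in the exponent by $c_0|x-Q|^2$ and to absorb $\sqrt{\det A(s)}$ into the constant, obtaining a Gaussian pointwise bound for $W$ whose constants depend only on $\La_1,\La_2$ and $\|A\|_\infty$; this is precisely where the dependence of $C$ on $\|A\|_\infty$ enters (through both $A(s)^{-1}$ and $\det A(s)$), uniformly in $s$. Since $(x,T),(y,\tau)\in\overline{\C}_T$ and $Q\in\Ga$, the singularity occurs only as $x$ approaches $\Ga$, so the worst case is $x\in\Ga$; using the $C^{2+\mu}$ regularity of $\Ga$ to flatten the boundary locally, the surface measure of $\{Q\in\Ga:|x-Q|\approx r\}$ scales like $r^{n-2}\,dr$. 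With $h=T-s$ and $r=|x-Q|$, the parabolic rescaling $r=h^{1/2}\rho$ turns the $\rho$-integral into a convergent Gaussian constant, and $\mathcal{R}^c\subset\{r<R,\ h<R^2\}$ leaves
\[
\int_{\mathcal{R}^c}|W(T-s,x,Q)|^{p'}\,d\si\,ds\le C\int_0^{R^2}h^{-\frac{n}{2}p'+\frac{n-1}{2}}\,dh .
\]
The exponent exceeds $-1$ exactly when $p>n+1$, and then the integral equals a constant times $R^{\,p'(1-(n+1)/p)}$; taking the $p'$-th root gives the power $R^{\,1-(n+1)/p}$. As $R$ is bounded on $\overline{\C}_T$ and $a<1-(n+1)/p$, this is $\le C R^a$, which is where the hypotheses on $p$ and $a$ are used sharply.

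For the second kernel the identical computation applies with $y$ in place of $x$: on $\mathcal{R}^c$ one has $|y-Q|\le|x-y|+|x-Q|\lesssim R$ and $\tau-s=(T-s)-(T-\tau)<R^2$, so $(Q,s)$ lies in a parabolic ball of radius $\lesssim R$ about $(y,\tau)$ and the same scaling yields $\le C R^{1-(n+1)/p}$. Combining the two bounds with $R^{1-(n+1)/p}\le C R^a$ completes the argument. I expect the main obstacle to be the bookkeeping of the boundary geometry: justifying the surface-measure scaling $r^{n-2}\,dr$ uniformly in $x\in\overline{\Om}$ via the local flattening of the $C^{2+\mu}$ surface $\Ga$, and verifying that the resulting constant is uniform in $s$ (hence insensitive to the time-dependence of $A$) and depends only on $p,n,\overline{\Om},T$ and $\|A\|_\infty$.
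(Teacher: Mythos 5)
Your argument is correct and is essentially the proof the paper relies on: the paper gives no details here, deferring verbatim to Lemma 5.5 of \cite{RefWorks:1}, where the near-region estimate is obtained exactly as you propose --- triangle inequality to drop the cancellation, H\"older's inequality in $(Q,s)$, and an $L_{p'}$ bound of the Gaussian kernel over the parabolic ball of radius $R=2|x-y|+|T-\tau|^{1/2}$ using the surface-measure scaling $r^{n-2}\,dr$ on the $(n-1)$-dimensional boundary and the substitution $r=h^{1/2}\rho$, with $p>n+1$ giving convergence and the exponent $1-\frac{n+1}{p}$, and with the ellipticity bounds (\ref{k}) turning $\tilde A(s)$ and $\det A(s)$ into constants depending only on $\|A\|_\infty$. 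One caveat: your computation tacitly uses $W\sim (t-s)^{-n/2}\exp(-c|x-Q|^2/(t-s))$, whereas the paper's display (\ref{w}) carries the power $(t-s)^{-(\frac n2+1)}$, under which the near-region $L_{p'}$ integral diverges (the compensating factor $\langle \tilde A(s)(Q-x),\eta_Q\rangle\lesssim |Q-x|^2$ present in the kernel of $J_\varepsilon$ is absent from $W$); the single-layer normalization you used is the one for which the stated exponent comes out, so you have in effect corrected what appears to be a typographical slip rather than introduced a gap.
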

%(Lemma 5.5)

\begin{lemma} Let $p>n+1$. 
	Suppose $ (x, T)$, $( y, \tau ) \in \C$ with $ 0 < \tau < T$ and
	\[ {\mathcal R}= \{( Q, s) \in \Ga \times (0, \tau) :      2(|x-y| + |T-\tau |^{\frac 12})  <         |x- Q| + |T-s|^{\frac 12}  \}.\]
	Then for $0< a< 1 - \frac{n+1}{p}$, there exists $C(p,n, \overline{\Om},T, ||A||_\infty) >0$ independent of $g \in L_p(\Ga \times (0,T)$ such that 
	\be \int\limits_{ {\mathcal R}}| \left(W(T-s, x, Q) - W(\tau -s, y, Q) \right) g(Q,s) |  \,d\si \,ds\leq C \left(|x-y|+ | T-\tau|^{\frac 12}  \right)^a || g||_{p, \Ga \times [0,\tau]}.\ee
	
\end{lemma}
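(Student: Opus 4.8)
The plan is to prove this far-region estimate just as the companion near-region lemma is complemented in the proof of Theorem~3.6 of \cite{RefWorks:1}, the only new feature being that the isotropic heat kernel is replaced by the anisotropic, time-frozen kernel $W$ of \eqref{w}; all constants therefore acquire a dependence on $\|\det A(t)\|_\infty$ through $\det A(s)$ and $\tilde A(s)=A(s)^{-1}$. Over ${\mathcal R}^c$ one exploits the smallness of the domain of integration; over ${\mathcal R}$ the gain must instead come from the \emph{difference} of the two kernels. To that end, for each fixed $(Q,s)$ I view $W(T-s,x,Q)$ and $W(\tau-s,y,Q)$ as the endpoint values of $\lambda\mapsto W(t_\lambda-s,x_\lambda,Q)$ along the segment $x_\lambda=(1-\lambda)x+\lambda y$, $t_\lambda=(1-\lambda)T+\lambda\tau$, with $A(s)$ frozen. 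Since $s<\tau<T$ we have $t_\lambda-s\ge\tau-s>0$, so the path avoids the singularity, and the fundamental theorem of calculus gives
$$|W(T-s,x,Q)-W(\tau-s,y,Q)|\le|x-y|\sup_\lambda|\nabla_xW|+|T-\tau|\sup_\lambda|\partial_tW|.$$

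The decisive observation is that on ${\mathcal R}$ one has $\delta\le\tfrac12\rho_x$, where $\delta:=|x-y|+|T-\tau|^{\frac12}$ and $\rho_x:=|x-Q|+|T-s|^{\frac12}$; by the triangle inequality and subadditivity of the square root in the time variable, every point of the path stays at parabolic distance $\ge\tfrac12\rho_x$ from $(Q,s)$. Differentiating the Gaussian in \eqref{w}, using the elementary bounds $z^ke^{-cz^2}\le C_ke^{-c'z^2}$ together with the two-sided bounds \eqref{k} (which control the eigenvalues of $A(s)$, hence $\det A(s)$ and $\tilde A(s)$, uniformly in $s$), one obtains estimates for $\nabla_xW$ and $\partial_tW$ that are, respectively, one and two parabolic degrees more singular than $W$. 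Inserting $\rho\gtrsim\rho_x$ along the path and absorbing the time term via $|T-\tau|=|T-\tau|^{\frac12}\cdot|T-\tau|^{\frac12}\le\tfrac12\rho_x\,\delta$, the two terms above combine to
$$|W(T-s,x,Q)-W(\tau-s,y,Q)|\le C\,\delta\,\rho_x^{-(n+1)},$$
the power $-(n+1)$ reflecting the effective parabolic homogeneity $-n$ of $W$ (the smoothness $\Ga\in C^{2+\mu}$, through the normal factor $\langle\tilde A(s)(x-Q),\eta_Q\rangle=O(|x-Q|^2)$, offsets the extra power of $t-s$ in the prefactor).

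Finally, H\"older's inequality in $(Q,s)$ with exponents $p$ and $p'=p/(p-1)$ gives
$$\int_{{\mathcal R}}|W(T-s,x,Q)-W(\tau-s,y,Q)|\,|g|\,d\si\,ds\le C\,\delta\,\|g\|_{p,\Ga\times[0,\tau]}\Big(\int_{{\mathcal R}}\rho_x^{-(n+1)p'}\,d\si\,ds\Big)^{1/p'}.$$
On ${\mathcal R}$ the cutoff $\rho_x\ge2\delta$ removes the singularity; decomposing into parabolic shells $\{\rho_x\sim R\}$ in $\Ga\times(0,T)$, each of measure $\lesssim R^{n+1}$, produces a geometric series in $R\ge2\delta$ with ratio exponent $(n+1)(1-p')<0$, hence dominated by its smallest scale $R\sim\delta$; the bracket is thus $\lesssim\delta^{(n+1)/p'-(n+1)}$. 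Collecting powers of $\delta$ yields the exponent $1+\tfrac{n+1}{p'}-(n+1)=1-\tfrac{n+1}{p}$, and since $\Om$ is bounded and $T$ finite $\delta$ stays in a bounded range, so $\delta^{\,1-(n+1)/p}\le C\,\delta^{a}$ for any prescribed $0<a<1-\tfrac{n+1}{p}$, which is the claim with $C=C(p,n,\overline{\Om},T,\|A\|_\infty)$.

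The real work lies in the derivative step: obtaining the Gaussian estimates \emph{uniformly} in the frozen parameter $s$ despite the time-dependent anisotropic coefficients — precisely where \eqref{k} is indispensable and where the dependence on $\|\det A(t)\|_\infty$ is born — together with the parabolic bookkeeping that must land exactly on homogeneity $-n$ so that the final summation reproduces the borderline exponent $1-\tfrac{n+1}{p}$. The hypothesis $p>n+1$ is what keeps the target range $0<a<1-\tfrac{n+1}{p}$ nonempty.
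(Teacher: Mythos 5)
Your approach coincides with the paper's: the authors give no independent proof of this lemma, stating only that the proof of Lemma 5.6 of \cite{RefWorks:1} ``can be repeated verbatim'' for the kernel $W$ of (\ref{w}), and that proof is exactly your argument --- the fundamental theorem of calculus along the segment joining $(x,T)$ to $(y,\tau)$, the observation that on $\mathcal R$ the whole segment stays at parabolic distance $\gtrsim\rho_x$ from $(Q,s)$, Gaussian derivative bounds uniform in the frozen parameter $s$ via (\ref{k}), and then H\"older's inequality together with a decomposition into parabolic shells of measure $\lesssim R^{n+1}$, landing on the exponent $1-\frac{n+1}{p}$.

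One caveat you should be aware of: your key pointwise bound $|W(T-s,x,Q)-W(\tau-s,y,Q)|\le C\,\delta\,\rho_x^{-(n+1)}$ requires $W$ to have parabolic homogeneity $-n$, i.e.\ the heat-kernel prefactor $(t-s)^{-n/2}$, whereas formula (\ref{w}) as printed carries $(t-s)^{-(n/2+1)}$ and contains no normal factor; the factor $\langle\tilde A(s)(x-Q),\eta_Q\rangle=O(|x-Q|^2)$ that you invoke to ``offset the extra power'' appears in the kernel of $J_\varepsilon$ in (\ref{J}), not in $W$. Taken literally, (\ref{w}) has homogeneity $-(n+2)$, your difference bound degrades to $\delta\,\rho_x^{-(n+3)}$, and the final exponent becomes $-1-\frac{n+1}{p}<0$, so the stated estimate would fail. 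The resolution is that (\ref{w}) is evidently a misprint for the Fabes--Riviere single-layer kernel with prefactor $(t-s)^{-n/2}$ (consistent with \cite{RefWorks:55} and with Lemma 5.6 of \cite{RefWorks:1}); with that normalization your argument is correct as written and reproduces the proof the paper outsources.
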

%(Lemma 5.6)

\begin{lemma}
	Let $p > n+1$ and suppose $ (x, T)$, $( y, \tau ) \in \C$ with $ 0 < \tau < T$. Then for $0< a< 1 - \frac{n+1}{p}$, there exists $C(p,n,\overline{\Om},T, ||A||_\infty) >0$ independent of $g \in L_p(\Ga \times (0,T)$ such that 
	\be\int\limits_\tau^T \int\limits\limits\limits_{\Ga} |W(T-s, x, Q)  g(Q,s)|\, d\si ds \leq C  (|T-\tau|)^a || g||_{p, \Ga \times [\tau, T]}.\ee

\end{lemma}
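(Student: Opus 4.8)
The plan is to bound the single integral directly by H\"older's inequality in the variables $(Q,s)$ over the slab $\Ga\times[\tau,T]$, which reduces matters to an $L_{p'}$ bound on the kernel alone, where $p'=\tfrac{p}{p-1}$ is the conjugate exponent. This is the most transparent of the three region estimates: since $s$ ranges only over $(\tau,T)$, the second kernel $W(\tau-s,y,Q)$ present in the two companion lemmas over $\Ga\times(0,\tau)$ does not appear, so there is no spatial difference and no cancellation to exploit --- the entire difficulty is the temporal singularity of $W$ as $s\uparrow T$. Explicitly, H\"older gives
\be
\int_\tau^T\!\!\int_\Ga |W(T-s,x,Q)\,g(Q,s)|\,d\si\,ds \ \leq\ \Big(\int_\tau^T\!\!\int_\Ga |W(T-s,x,Q)|^{p'}\,d\si\,ds\Big)^{1/p'}\,\|g\|_{p,\Ga\times[\tau,T]},
\ee
so it suffices to estimate the kernel factor by $C\,|T-\tau|^a$ with $C=C(p,n,\overline\Om,T,\|A\|_\infty)$.

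First I would strip the anisotropy from $W$ using the uniform ellipticity (\ref{k}). Since $\tilde A(s)=A(s)^{-1}=\mathrm{diag}(\la_i(s)^{-2})$ with $\La_1\le \la_i(s)^{-2}\le \La_2$, one has $\langle \tilde A(s)(Q-x),(Q-x)\rangle \ge \La_1|Q-x|^2$, while $\det A(s)=\prod_i\la_i(s)\ge \La_2^{-n/2}$ is bounded below away from $0$; hence $|W(T-s,x,Q)|$ is dominated by $C\,(T-s)^{-n/2}\exp\!\big(-\La_1|Q-x|^2/4\pi(T-s)\big)$, the temporal order $n/2$ being that of the heat fundamental solution underlying the single--layer representation (\ref{class}) and $C$ depending only on $\|A\|_\infty$. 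Next comes the surface--Gaussian estimate: covering the compact $C^{2+\mu}$ hypersurface $\Ga$ by finitely many coordinate graphs and integrating the Gaussian over each $(n-1)$-dimensional patch yields
\be
\int_\Ga \exp\!\Big(-\frac{c\,|Q-x|^2}{T-s}\Big)\,d\si(Q)\ \leq\ C(\overline\Om)\,(T-s)^{\frac{n-1}{2}},
\ee
uniformly in the base point $x$ and in $s$; raising $W$ to the power $p'$ only rescales the constant $c$ and leaves the order of this surface integral unchanged.

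Combining the two displays, $\int_\Ga |W|^{p'}\,d\si$ is controlled by $C\,(T-s)^{\frac{n-1}{2}-\frac n2 p'}$, and it remains to integrate in $s$. The resulting $\int_\tau^T (T-s)^{\frac{n-1}{2}-\frac n2 p'}\,ds$ is a Beta--type integral whose convergence at the endpoint $s=T$ is exactly borderline: it requires $\tfrac{n-1}{2}-\tfrac n2 p'>-1$, that is $p'<\tfrac{n+1}{n}$, which through $p'=\tfrac{p}{p-1}$ is precisely the hypothesis $p>n+1$ --- the only place the restriction on $p$ is used. Upon convergence the integral equals a constant times $|T-\tau|^{\frac{n+1}{2}-\frac n2 p'}$; taking the $p'$-th root gives the bound with exponent $\tfrac12\big(1-\tfrac{n+1}{p}\big)$, which lies in $(0,\,1-\tfrac{n+1}{p})$, and since $|T-\tau|\le T$ the estimate then also holds for every smaller $a$, covering the range needed when this lemma is combined with the companion estimates. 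The constant depends on $p,n,\overline\Om,T$ and $\|A\|_\infty$ as asserted.

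The main obstacle is thus this borderline temporal integrability of $W^{p'}$ at $s=T$: the decay $(T-s)^{(n-1)/2}$ gained from integrating the Gaussian over the $(n-1)$-dimensional surface must outweigh the singularity of the time prefactor, and the two balance only when $p>n+1$. The remaining points are purely technical --- uniformity of all constants in the base point $x$ (which may lie on $\Ga$) and in the anisotropy, both secured by (\ref{k}) and the $C^{2+\mu}$ regularity and compactness of $\Ga$, exactly in the spirit of the Fabes--Riviere Proposition quoted above. Together with the two companion estimates over $\Ga\times(0,\tau)$, this lemma supplies the last piece needed to show that the classical solution (\ref{class}) is H\"older continuous, en route to Theorem \ref{T3.6}.
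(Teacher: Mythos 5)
Your route---H\"older's inequality in $(Q,s)$, reduction of the anisotropic kernel to an isotropic Gaussian via (\ref{k}), the surface estimate $\int_\Ga e^{-c|Q-x|^2/(T-s)}\,d\si(Q)\le C(\overline\Om)(T-s)^{(n-1)/2}$, and a Beta integral in $s$ whose convergence pins down $p>n+1$---is exactly the standard argument, and it is also the paper's: the paper supplies no independent proof of this lemma but states that the proof of Lemma 5.7 of \cite{RefWorks:1} ``can be repeated verbatim,'' and that proof is precisely this computation. Your arithmetic is right: $\bigl(\int_\tau^T(T-s)^{\frac{n-1}{2}-\frac{n}{2}p'}ds\bigr)^{1/p'}=C|T-\tau|^{\frac12(1-\frac{n+1}{p})}$. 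Two discrepancies with the literal text should be flagged, though both trace to defects in the paper rather than in your reasoning. First, the displayed definition (\ref{w}) of $W$ carries the prefactor $(t-s)^{-(n/2+1)}$ (and a stray $y$ where $x$ is meant); with that exponent your domination $|W|\le C(T-s)^{-n/2}\exp(-\La_1|Q-x|^2/4\pi(T-s))$ does not follow, and indeed the left-hand side of the lemma would diverge already for $g\equiv 1$ and $x\in\Ga$. The $+1$ is evidently a typo inherited from the double-layer kernel in (\ref{J}), and your single-layer exponent $n/2$ is the correct reading, but you should say so rather than silently substitute it. Second, your exponent $\tfrac12\bigl(1-\tfrac{n+1}{p}\bigr)$ covers only $a\le\tfrac12(1-\tfrac{n+1}{p})$ if the right-hand side $|T-\tau|^a$ is read literally, not the full stated range; you correctly note this and argue it suffices. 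In fact the statement is what is off: the companion lemmas and Proposition \ref{Holder} require a bound by $(|T-\tau|^{1/2})^a=|T-\tau|^{a/2}$, which your exponent delivers for every $a<1-\tfrac{n+1}{p}$, whereas the literal bound $|T-\tau|^a$ is false for $a$ near $1-\tfrac{n+1}{p}$ once $p>2n$ (test $g\equiv 1$, $x\in\Ga$, where the sharp rate is $|T-\tau|^{\frac12-\frac1p}$). So: same method as the cited proof, correct where it matters, and the version you actually prove is the one that is both true and sufficient for Theorem \ref{T3.6}.
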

%(Lemma 5.7)
We refer to the proofs of Lemmas 5.5, 5.6 and 5.7 respectively in  \cite{RefWorks:1} which can be repeated verbatim for the above three Lemmas. Similar to Proposition 5.8 in \cite{RefWorks:1}, we have the following H\"{o}lder  estimates for the solution of (\ref{eqn3.6}). 

\begin{proposition}\label{Holder}
	Let $\gamma \in L_p(\Ga \times (0,T))$ for $ p > n+1$. Then a solution of $(\ref{eqn3.6}) $ is H\"{o}lder continuous on $\bar \Om \times (0,T)$ with H\"{o}lder exponent $0 < a< 1- \frac{n+1}{p}$ and there is a constant $K_p > 0$, depending on $p$, $\bar \Om$, $T$ and $||A||_\infty$, independent of $\vp_1$ such that 
	\be 
	|\vp(x,T) - \vp(y, \tau)| \leq K_p \left( |T-\tau|^{\frac 12} + |x-y|\right)^a ||\vp_1||_{p, \Ga \times (0,T)} 
	\ee
	for all $ (x,T), (y,\tau) \in \C$.
\end{proposition}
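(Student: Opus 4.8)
The plan is to assemble the three preceding Lemmas with the Fabes--Riviere invertibility (part 3 of the Proposition above), exactly as in the proof of Proposition~5.8 of \cite{RefWorks:1}. Starting from the integral representation (\ref{class}) of the classical solution, for $(x,T),(y,\tau)\in\C$ with $0<\tau<T$ I would write
\[
\vp(x,T)-\vp(y,\tau)=\int_0^T\int_\Ga W(T-s,x,Q)g(Q,s)\,d\si\,ds-\int_0^\tau\int_\Ga W(\tau-s,y,Q)g(Q,s)\,d\si\,ds,
\]
and then split the $s$-integral at $s=\tau$. On the slab $\Ga\times(\tau,T)$ only the first kernel survives, while on $\Ga\times(0,\tau)$ both kernels are present and I keep their difference $W(T-s,x,Q)-W(\tau-s,y,Q)$.

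Over $\Ga\times(\tau,T)$ the third Lemma applies directly and gives
\[
\int_\tau^T\int_\Ga|W(T-s,x,Q)g(Q,s)|\,d\si\,ds\leq C\,|T-\tau|^a\,\|g\|_{p,\Ga\times[\tau,T]}\leq C\left(|x-y|+|T-\tau|^{\frac12}\right)^a\|g\|_{p,\Ga\times(0,T)}.
\]
For the integral over $\Ga\times(0,\tau)$ I would decompose the domain into the \emph{near} region ${\mathcal R}^c$, where $|x-Q|+|T-s|^{\frac12}$ is comparable to the base distance $|x-y|+|T-\tau|^{\frac12}$, and the \emph{far} region ${\mathcal R}$, where this quantity dominates; after checking that these regions cover $\Ga\times(0,\tau)$, I apply the first Lemma on ${\mathcal R}^c$ and the second on ${\mathcal R}$. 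Each contributes a bound of the form $C\left(|x-y|+|T-\tau|^{\frac12}\right)^a\|g\|_{p,\Ga\times[0,\tau]}$ with the same H\"older exponent $0<a<1-\frac{n+1}{p}$.

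Adding the three contributions yields
\[
|\vp(x,T)-\vp(y,\tau)|\leq C\left(|x-y|+|T-\tau|^{\frac12}\right)^a\|g\|_{p,\Ga\times(0,T)},
\]
with $C$ depending only on $p$, $n$, $\bar\Om$, $T$ and $\|A\|_\infty$. The final step replaces $\|g\|_p$ by $\|\vp_1\|_p$: since $g=-2(-c_nI+J)^{-1}\ga$ with $\ga=\vp_1$ and, by part~3 of the Fabes--Riviere Proposition, $-c_nI+J$ is invertible on $L_p(\Ga\times(0,T))$ (as $c_n\neq0$), one has $\|g\|_{p,\Ga\times(0,T)}\leq C\,\|\vp_1\|_{p,\Ga\times(0,T)}$, which produces the asserted estimate with constant $K_p$.

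The main obstacle I anticipate is not any individual Lemma---these carry over verbatim from \cite{RefWorks:1} once the kernel $W$ and operator $J$ are equipped with the coefficient matrix $A(t)$---but rather the uniform bookkeeping: verifying that the near/far decomposition of $\Ga\times(0,\tau)$ is exhaustive with thresholds consistent with those in the Lemmas, and tracking the dependence of every constant on $\|A\|_\infty$ through both the Gaussian kernel $W$ and the boundedness of $(-c_nI+J)^{-1}$. It is precisely the invertibility step that transfers the estimate from the auxiliary density $g$ back to the genuine boundary datum $\vp_1$ and closes the argument.
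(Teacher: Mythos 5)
Your proposal is correct and follows essentially the same route as the paper, which itself defers to Proposition~5.8 of \cite{RefWorks:1}: split the difference $\vp(x,T)-\vp(y,\tau)$ according to the time slab $\Ga\times(\tau,T)$ and the near/far regions ${\mathcal R}^c$, ${\mathcal R}$ of $\Ga\times(0,\tau)$, apply the three preceding Lemmas, and then transfer the bound from the density $g$ to the datum $\vp_1$ via the invertibility of $-c_nI+J$ on $L_p(\Ga\times(0,T))$ from part~3 of the Fabes--Riviere Proposition. Your flagged concerns (exhaustiveness of the near/far split and tracking the $\|A\|_\infty$-dependence of the constants) are exactly the points the paper handles by citing that the Lemmas carry over verbatim once the kernel $W$ and the operator $J$ incorporate $A(t)$.
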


The proof of Theorem \ref{T3.6} can now be completed.

\section{ Local existence of the solution}
Here we illustrate the proof of local existence of solutions of (\ref{m-grow}) for the case $m=2$, which can be easily extended to $m$ component case. In order to prove local existence of the solution we need the following result. 
\begin{theorem}\label{lglobal}
	If $f= (f_1, f_2)$, $ g = (g_1, g_2)$ are  Lipschitz function then the $ (\ref{grow}) $ has a unique global solution.
	\end{theorem}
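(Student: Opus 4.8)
The plan is to solve (\ref{grow}) by a contraction mapping argument on a short time interval whose length depends only on the Lipschitz constant of $f$ and $g$, and then to iterate with that same step length to reach all of $[0,\infty)$.

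First I would recast (\ref{grow}) as a fixed point problem. Fix $p>n+1$ and $0<\beta<1-\frac{n+1}{p}$, and let $X_T$ be the Banach space of pairs $(u,v)$ with finite norm $|u|^{\beta}_{\Om_T}+|v|^{\beta}_{\Om_T}$ (the H\"older norm appearing in Theorem \ref{T3.6}). Given $(u,v)\in X_T$, define $\Phi(u,v)=(\bar u,\bar v)$, where $\bar u$ solves the decoupled \emph{linear} problem
\[
\frac{\pa \bar u}{\pa t}=\mathcal{L}\bar u+f_1(u,v),\quad d\,\na_t\bar u\cdot\eta=g_1(u,v),\quad \bar u(\cdot,0)=u_0,
\]
and $\bar v$ solves the analogous problem with $\tilde{\mathcal L}$, $f_2$, $g_2$, $v_0$. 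A fixed point of $\Phi$ is exactly a solution of (\ref{grow}). Well-posedness of each linear problem, and the regularity $\bar u\in W^{2,1}_p(\C_T)$, follow from the $W^{2,1}_p$-estimate quoted from Pg.\ 351 of \cite{RefWorks:65}; the uniform parabolicity and boundedness guaranteed by (\ref{k}) ensure the hypotheses hold with constants depending only on $\|\det A(t)\|_\infty$.

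Next I would show $\Phi$ is a contraction for small $T$. Since $f,g$ are globally Lipschitz with some constant $L$, the differences of the source and boundary data obey
\[
\|f_i(u_1,v_1)-f_i(u_2,v_2)\|_{p,\Om_T}\le L\,(|\Om|\,T)^{1/p}\,\big\||(u_1,v_1)-(u_2,v_2)\|_{X_T},
\]
and the same with $g_i$ and the norm $\|\cdot\|_{p,\Ga_T}$ (giving a factor $(|\Ga|\,T)^{1/p}$), because the $X_T$-norm dominates the sup norm. Applying $\bar u,\bar v$ to zero initial data and feeding these bounds into the H\"older estimate of Theorem \ref{T3.6} yields
\[
\|\Phi(u_1,v_1)-\Phi(u_2,v_2)\|_{X_T}\le C_{p,T,\|\det A\|_\infty}\,T^{1/p}\,\|(u_1,v_1)-(u_2,v_2)\|_{X_T}.
\]
The constant in Theorem \ref{T3.6} is non-decreasing in $T$, so it is bounded for $T$ in a fixed bounded range; hence there is $T_0>0$, depending only on $L$, $p$ and $\|\det A\|_\infty$, for which $C\,T_0^{1/p}<1$. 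The same estimate (with one argument replaced by the datum-determined constant map) shows $\Phi$ carries a closed ball of $X_{T_0}$ into itself, so Banach's theorem gives a unique solution on $[0,T_0]$. The decisive point is that $T_0$ is \emph{independent of the size of $(u_0,v_0)$}.

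Finally, I would globalize by iteration. By the $W^{2,1}_p$-bound and the trace theory of \cite{RefWorks:65}, the endpoint values $u(\cdot,T_0),v(\cdot,T_0)$ lie in the trace space $W^{2-\frac2p}_p(\Om)$, which is precisely the admissible class of initial data for a fresh application of the contraction; moreover they automatically satisfy the compatibility condition $d\,\na_t u(\cdot,T_0)\cdot\eta=g_1(u(\cdot,T_0),v(\cdot,T_0))$ on $\Ga$, since $u$ solves the boundary equation at every time. Because $T_0$ is uniform, re-solving on $[T_0,2T_0],[2T_0,3T_0],\dots$ extends the solution to all of $[0,\infty)$, and uniqueness on each subinterval glues to a single global solution; the linear growth $|f(z)|,|g(z)|\le L(|z|+1)$ forced by the Lipschitz hypothesis precludes any finite-time obstruction. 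The main obstacle is the solution-dependent, mass-transport boundary flux $g_i(u,v)$: unlike the homogeneous Neumann case it couples bulk and surface dynamics, so closing the contraction hinges on controlling the associated boundary potential, which is exactly what the extension of the Fabes--Riviere estimates to the time-dependent operator $\Delta_t$ (culminating in Theorem \ref{T3.6}) supplies.
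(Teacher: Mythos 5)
Your proposal is correct in outline but follows a genuinely different route from the paper. The paper fixes an \emph{arbitrary} $T>0$, works in the space $X$ of continuous pairs vanishing at $t=0$, and defines $S(u,v)=(U-u_0,V-v_0)$ through the same decoupled linear problems you use; it then shows $S$ is continuous (applying the H\"older estimate of Corollary \ref{cor1} to the difference problem (\ref{grow3}) together with the Lipschitz hypothesis) and compact (bounded sets are mapped to H\"older-bounded, hence precompact, sets), obtains a fixed point by a Schauder-type theorem, and establishes uniqueness separately via a Gronwall inequality on $\Om_T$. Since $T$ was arbitrary, no iteration is needed. Your Banach contraction with a step length $T_0$ depending only on the Lipschitz constant is constructive and yields uniqueness for free, but it forces a restart at $t=T_0,2T_0,\dots$, and that is the one place your sketch needs more care: Theorem \ref{T3.6} as stated requires initial data in $W^2_p(\Om)$, whereas the trace $u(\cdot,T_0)$ of a $W^{2,1}_p$ solution is only guaranteed to lie in $W^{2-\frac{2}{p}}_p(\Om)$, so you must either sharpen the H\"older estimate to accept $W^{2-\frac{2}{p}}_p$ data (as in Lemma \ref{lady}) or subtract a suitable extension of the endpoint data before contracting again. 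You should also restrict $X_T$ to pairs satisfying $(u,v)(\cdot,0)=(u_0,v_0)$ so that the difference problems carry zero initial data \emph{and} satisfy the compatibility condition required in Theorem \ref{T3.6}, and justify (by extending $\theta$ and $\vp_1$ by zero in time) the asserted monotonicity of $C_{p,T,\|\det A(t)\|_\infty}$ in $T$. These are repairable technicalities; the strategy is sound, and each approach has its advantage --- yours avoids the separate Gronwall uniqueness step, while the paper's compactness argument avoids the uniform-step bookkeeping entirely.
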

	\begin{proof} Here we sketch first few steps of the proof to indicate that the linear term can be controlled.	Let $T>0$ and $u_0, v_0 \in W_p^2(\Omega)\times W_p^2(\Omega)$ such that they satisfy the compatibility condition
		\[ \frac{\partial u_0}{\partial \eta}=g_1(u_0,v_0)\, \text{and} \ \frac{\partial v_0}{\partial \eta}=g_2(u_0,v_0)\]
		Set \[ X=\lbrace (u,v)\in C(\overline \Omega\times[0,T])\times C(\Omega\times [0,T]): u(x,0)=0\  \text{and} \ v(x,0)=0\ \text{for all } \ x\in \overline\Omega\rbrace\]
		Note that $(X,\Vert \cdot\Vert_{\infty})$ is a Banach space. Let $(u,v)\in X$ and  consider the problem
	\begin{align}\label{grow1}
& \frac{d U}{dt}(x,t) =  {\mathcal L} U(x,t)+f_1(u+u_0,v+v_0), &~\mbox{in}~  \Om \times [0,t) \nonumber\\
& \frac{d V }{dt}(x,t) =  \tilde{\mathcal L} V(x,t)+f_2(u+u_0,v+v_0), 
&~\mbox{in}~  \Om \times [0,t)\\
&\nabla_t U \cdot \eta=g_1(u+u_0,v+v_0) , \  \nabla_t V \cdot \eta=g_2(u+u_0,v+v_0) 
&~\mbox{on}~ \partial\Omega \times [0,t) \nonumber\\
&\nonumber U=u_0, \  V=v_0  &~\mbox{in}~ \Om \times \{0\}
\end{align} 

		From Lemma \ref{lady}, $(\ref{grow1})$ possess a unique  solution $(U,V)\in W_q^{2,1}(\Omega_T)\times W_q^{2,1}(\Omega_T)$. Furthermore, from embedding $(U,V)\in C(\overline\Omega_T\times [0,T])\times  C(\overline\Omega_T\times [0,T])$. 
		Define $ S:X\rightarrow X$ as \[  S(u,v)=(U-u_0, V-v_0)\] where $(U,V)$ solves $(\ref{grow1})$ . We will see $S$ is continuous and compact. 
		Using linearity, $(U-\tilde U, V-\tilde V)$ solves 
\be\label{grow3}\left.\begin{array}{rll}
 \frac{d }{dt}(U- \tilde U)(x,t) &= & {\mathcal L} (U-\tilde U)(x,t) +f_1(u+u_0,v+v_0)-f_1(\tilde u+u_0,\tilde v+v_0), ~\mbox{in}~ \Om \times [0,t) \\
 \frac{d }{dt}(V - \tilde V)(x,t)  &=& \tilde {\mathcal L} ( V-\tilde V)(x,t)+f_2(u+u_0,v+v_0)-f_2(\tilde u+u_0,\tilde v+v_0), ~\mbox{in}~  \Om \times [0,t)\\
\nabla_t(U-\tilde U) \cdot \eta &=& g_1(u+u_0,v+v_0)- g_1(\tilde u+u_0,\tilde v+v_0) ~\mbox{on}~  \partial\Omega \times [0,t),\\
\nabla_t( V-\tilde V)\cdot \eta &=  &  g_2(u+u_0,v+v_0)- g_2(\tilde u+u_0,\tilde v+v_0) 
~\mbox{on}~ \partial\Omega \times [0,t) \\
 U-\tilde U=0, \ && V-\tilde V=0  ~\mbox{in}~ \Om \times \{0\}.
\end{array}\right\}\ee 

	From Corollary \ref{cor1},  if $q>n+2$ then solution of $(\ref{grow3})$ is H\"{o}lder continuous. Therefore there exists $C$ independent of $f_i$ and $g_i$, $i = 1, 2$,  such that 
\be\begin{array}{lll}		\Vert U-\tilde U\Vert_{\infty,\Omega_T}+\Vert V-\tilde V\Vert_{\infty,\Omega_T}
&\leq &  C \left\{ \Vert f_1(u+u_0,v+v_0)-f_1(\tilde u+u_0,\tilde v+v_0)\Vert_{q,\Omega_T}\right. \nonumber\\
&&+ \Vert  f_2(u+u_0,v+v_0)-f_2(\tilde u+u_0,\tilde v+v_0)\Vert_{q,\Omega_T}\nonumber\\
&&+ \Vert  g_1(u+u_0,v+v_0)-g_1(\tilde u+u_0,\tilde v+v_0)\Vert_{q,\partial\Omega_T}\nonumber\\
&& \left.+ \Vert  g_2(u+u_0,v+v_0)-g_2(\tilde u+u_0,\tilde v+v_0)\Vert_{q,\partial\Omega_T} \right\}
\end{array}\ee
		Using boundedness of $\Omega$, there exists $\tilde C >0$ such that 
\be\begin{array}{lll}	
 \Vert U-\tilde U\Vert_{\infty,\Omega_T}+\Vert V-\tilde V\Vert_{\infty,\Omega_T}&\leq& \tilde C \left\{\Vert f_1(u+u_0,v+v_0)-f_1(\tilde u+u_0,\tilde v+v_0)\Vert_{\infty,\Omega_T} \right. \\
  && +\Vert f_2(u+u_0,v+v_0)-f_2(\tilde u+u_0,\tilde v+v_0)\Vert_{\infty,\Omega_T} \nonumber\\
  && +\Vert g_1(u+u_0,v+v_0)-g_1(\tilde u+u_0,\tilde v+v_0)\Vert_{\infty,\partial\Omega_T} \nonumber\\
  && \left.+\Vert g_2(u+u_0,v+v_0)-g_2(\tilde u+u_0,\tilde v+v_0)\Vert_{\infty,\partial\Omega_T} \right\}\\
\end{array}\ee
		Since $f_i$, $g_i$ $i = 1,2$ are Lipschitz, $S$ is continuous with respect to the sup norm. Now it remain to show that this $S$ is compact. Moreover, $p>n+2$ from Corollary \ref{cor1} imples that solution is infact H\"{o}lder continuous therefore $S$ maps bounded sets in $X$ to precompact sets, hence $S$ is compact with respect to sup norm.   The uniqueness of the solution follows by deriving the Gronwall's inequality on $\Om_T$ by arguments similar to as in the proof of Theorem 6.1 of \cite{RefWorks:1}. Since $T>0$ was arbitrary, we further conclude the existence of unique global solution.
	\end{proof} 

\noi {\bf Proof of Theorem \ref{local}:\/ }The proof of the theorem involves truncating the given functions $f$, $g$ so that the truncated functions are Lipschitz. Precisely, for each $r>k$, we define cut off functions $\phi_r\in C_0^{\infty}(\mathbb{R}^2,[0,1])$ and $\psi_r\in C_0^{\infty}(\mathbb{R}^2,[0,1])$ such that $\phi_r(z,w)=1$ when $\vert z\vert\leq r$ and $\vert w\vert\leq r$, and $\phi_r(z,w)=0$ for all $\vert z\vert >2r$ or $\vert w\vert\geq 2r$. Define $f_r=f\phi_r$ and $g_r=g\psi_r$. We also have $u_0\in W^{2}_p(\Omega)$ and $v_0\in W^{2}_p(\partial\Omega)$ with $p>n$ and $u_0$, $v_0$ satisfy the compatibility condition for $p>3$. Hence, from the Sobolev imbedding theorem, $u_0$ and $v_0$ are bounded functions, i.e., there exists $k>0$ such that $\Vert u_0(\cdot)\Vert_{\infty,\Omega}\leq k$ and $\Vert v_0(\cdot)\Vert_{\infty,\partial\Omega}\leq k$. Applying  Theorem \ref{lglobal}, we obtain global solution $(u_r, v_r)$ for each $r$. Then letting $r\to \infty$ we obtain the solution $(u,v)$ with required properties. We refer to \cite{RefWorks:1} for  details.

\qed

\section{Existence of global solution}

In this section we will prove global existence of solutions of the system (\ref{grow})  under given conditions. We begin by proving apriori estimates, in particular,  $L_1$ estimate for the solutions of $ (\ref{grow}).$
\begin{lemma}{\bf ($L_1$-estimates)}
Let $(u,v)$ be  the unique maximal nonnegative solution to $(\ref{grow})$ and suppose that $T_{max}<\infty$. If  $V_N$,  $ V_F$ and $V_{L1}$ holds, then there exists $C_1(D, \tilde D, L_1, k_2)$ such that 
	\be \Vert u(\cdot, t)\Vert_{1,\Omega}, \Vert u(\cdot, t)\Vert_{1,\Ga},\Vert v(\cdot, t)\Vert_{1,\Ga}\leq C_1(t)  \ \forall \ 0\leq t<T_{max} \ee
\end{lemma}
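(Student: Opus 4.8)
The plan is to derive a differential inequality for the total mass of a suitable positive linear combination of the components and to close it by Gronwall's inequality, using the componentwise nonnegativity from Theorem \ref{local} throughout. It is convenient to first remove the zeroth order term as in (\ref{grow2}): set $\rho(t)=\sqrt{\prod_{i=1}^n\la_i(t)}$ and $\tilde u=\rho u$, $\tilde v=\rho v$, so that $\tilde u_t=D\Delta_t\tilde u+\rho f_1(u,v)$ and $\tilde v_t=\tilde D\Delta_t\tilde v+\rho f_2(u,v)$, with the boundary data scaled by $\rho$. Since $a(t)=\rho'/\rho$ and $k_1\le a\le k_2$ by (\ref{k}), we have $1\le\rho(t)\le e^{k_2t}$, which is exactly where the dependence of $C_1$ on $k_2$ enters and why $C_1$ must be allowed to grow in $t$.

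First I would integrate the $\tilde u$– and $\tilde v$–equations over $\Om$. The divergence theorem turns $\int_\Om\Delta_t\tilde u\,dx$ into the boundary integral of the conormal flux $(A(t)^{-2}\nabla\tilde u)\cdot\eta$, which the boundary condition identifies with $\rho\,g_1$ up to a geometric weight bounded above and below by the ellipticity constants $\La_1,\La_2$ in (\ref{k}) and by $D,\tilde D$; these factors are absorbed into the constant. Forming $w=b_1\tilde u+b_2\tilde v$ with the constants $b_1,b_2>0$ supplied by $(V_{L1})$ and applying $(V_{L1})$ both to the interior source $b_1f_1+b_2f_2$ and to the boundary flux $b_1g_1+b_2g_2$, then using $u+v\le(\min_jb_j)^{-1}(b_1u+b_2v)$ together with $\rho(u+v)=\tilde u+\tilde v$, yields
\be
\frac{d}{dt}\int_\Om w\,dx \;\le\; C\int_\Om w\,dx + C\int_\Ga w\,d\si + C\,e^{k_2t},
\ee
with $C=C(D,\tilde D,L_1,\La_1,\La_2,b_j,\Om)$. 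If the boundary term were under control, Gronwall's inequality would bound $\int_\Om w\,dx$, and hence $\Vert u(\cdot,t)\Vert_{1,\Om}$ and $\Vert v(\cdot,t)\Vert_{1,\Om}$ via $\rho\ge1$.

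The hard part is precisely this boundary term: $(V_{L1})$ bounds the boundary flux only in terms of the \emph{boundary traces} $\int_\Ga(u+v)\,d\si$, so the interior $L_1$ estimate and the trace $L_1$ estimates in the statement are coupled and cannot be closed separately from the displayed inequality alone. I would break the coupling by treating the two simultaneously, via a trace interpolation inequality of the form $\Vert w\Vert_{1,\Ga}\le\var\,\Vert\nabla w\Vert_{1,\Om}+C_\var\,\Vert w\Vert_{1,\Om}$, and absorbing the gradient contribution against the parabolic smoothing of the diffusion term; equivalently, one may first integrate in time to bound $\int_0^t\!\int_\Ga(b_1g_1+b_2g_2)$ and then upgrade this to a pointwise-in-$t$ trace bound. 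Once the $\int_\Ga w$ term is dominated by a small multiple of a quantity already controlled plus the bulk $L_1$ norm, a single Gronwall argument applied to $\int_\Om w\,dx+\int_\Ga w\,d\si$ delivers all three bounds with a common constant $C_1(t)$ depending on $D,\tilde D,L_1,k_2$, the remaining structural constants $\La_1,\La_2,b_j,\Om,n$ being absorbed. I expect the verification of this trace absorption step — keeping the coefficient in front of $\int_\Ga$ genuinely small while the compensating bulk norm remains one that the $L_1$ theory controls — to be the only delicate point, all other estimates being routine consequences of $(V_{L1})$ and the bounds (\ref{k}).
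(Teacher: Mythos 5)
Your setup is right and matches the paper's starting point: integrating the system over $\Om$ and using $(V_{L1})$ gives the differential inequality with the extra boundary term $\int_\Ga(u+v)\,d\si$, and you have correctly identified that controlling this trace term is the entire difficulty. But the mechanism you propose to close the loop does not work. First, the $\var$-trace inequality $\Vert w\Vert_{1,\Ga}\le\var\Vert\nabla w\Vert_{1,\Om}+C_\var\Vert w\Vert_{1,\Om}$ is \emph{false} in the $L_1$ setting: a boundary-layer function equal to $1$ on $\Ga$ and decaying linearly to $0$ over a collar of width $\de$ has $\Vert w\Vert_{1,\Ga}\approx\Vert\nabla w\Vert_{1,\Om}\approx|\Ga|$ while $\Vert w\Vert_{1,\Om}\to0$ as $\de\to0$, so the coefficient of the gradient cannot be made small. (The paper's Lemma \ref{omega} is precisely this Ehrling-type inequality, but in $H^1(\Om)\to L_2(\Ga)$, where the trace is compact; it is used later for the $L_p$ estimates, not here.) Second, even if such an inequality held, there is no gradient quantity available to absorb it into: at the $L_1$ level one tests the equation with the constant function, so the diffusion term integrates to a pure boundary flux and produces no dissipation term $\int_0^t\int_\Om|\nabla w|$; the "parabolic smoothing" you invoke only appears when testing with $w$ itself, which is the $L_2$ energy estimate, not the $L_1$ estimate. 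Your alternative remark ("integrate in time and then upgrade to a pointwise-in-$t$ trace bound") names the desired conclusion but supplies no argument for it.

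The paper closes the gap by a duality argument rather than interpolation. Fixing $T<T_{\max}$, it constructs a nonnegative solution $\vp$ of the \emph{backward} problem $\vp_t=-d\Delta_t\vp-(L_1+k_2)\vp$ with the Robin condition $d\nabla_t\vp\cdot\eta=(L_1+k_2)\vp+1$ on $\Ga$ and strictly positive terminal data $\vp_T$, with $d=\min\{D,\tilde D\}$. Pairing $u$ and $v$ against $\vp$ and integrating by parts, the "+1" in the boundary condition generates the term $\int_0^T\int_\Ga(u+v)$ on the good side of the inequality, and the strict positivity of $\vp_T$ generates $\de\int_\Ga\bigl(u(\cdot,T)+v(\cdot,T)\bigr)$; both are thereby bounded by $C_1+C_2\int_0^T\int_\Om(u+v)$. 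Substituting this into the time-integrated interior inequality and applying Gronwall twice then yields all three bounds in the statement. If you want to complete your proof, you need to replace the trace-interpolation step with a construction of this kind (or some other device that converts the boundary integral into a bulk integral); as written, the argument has a genuine gap at its central step.
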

\begin{proof}
 Adding the two equations in $(\ref{grow})$ and integrating the equation  over $\Omega$, we get
\begin{align}\label{interior}
\frac{d}{dt}\int\limits\limits_{\Omega} (u+v)&=\int\limits\limits_{\Omega} D\Delta_t u+\int\limits\limits_{\Omega} \tilde D \Delta_t v+\int\limits\limits_{\Omega} a(t)  (u+v)+\int\limits\limits_{\Omega}(f_1(u,v) +f_2(u,v))\nonumber\\
 & \leq \int\limits\limits_{\Omega} (f_1(u,v)+f_2(u,v))+ \int\limits\limits_\Gamma (g_1(u,v)+g_2(u,v))+\int\limits\limits_{\Omega}a(t)(u+v)\nonumber\\
 &\leq \int\limits\limits_{\Omega}(L_1+k_2)\left({u+v+1}\right)+\int\limits\limits_\Gamma L_1\left({u+v+1}\right).
\end{align}
where recall $a(t) = \frac{(\lambda_1(t)\lambda_2(t)\lambda_(t))'}{(\lambda_1(t)\lambda_2(t)\lambda_3(t))}$ and $a(t) \leq k_2$ for all $t$ by assumption.
Fix $0<T<T_{max}$, $d>0$ a constant (to be chosen later), $L_1>0$ and consider the system 
\begin{align}
\varphi_t &=-d\Delta_t \varphi-(L_1+k_2)\varphi &  (x,t)\in\Omega\times (0, T) \nonumber\\
d\nabla_t\varphi\cdot\eta&=(L_1+k_2)\varphi+1& (x,t)\in \Gamma\times (0, T) \nonumber\\
\varphi&=\varphi_{T}& x\in\Omega, \ t=T.
\end{align}
Here, $\varphi_{T}\in C^{2+\gamma}(\overline\Omega)$ for some $\gamma>0$ is strictly positive and satisfies the compatibility condition
\[ d \nabla_T \varphi_{T} \cdot \eta=(L_1+k_2)\varphi_T+1\ \text {on} \ \Gamma\times \lbrace T\rbrace.\]
From Theorem 5.3 in chapter 4 of \cite{RefWorks:65}, $\varphi\in C^{2+\gamma, 1+\frac{\gamma}{2}}(\overline\Omega\times [0,T])$, and therefore $\varphi\in C^{2+\gamma, 1+\frac{\gamma}{2}}(\Gamma\times [0,T])$ . Moreover, arguing as in the  previous section, we conclude $\varphi\ge 0$.
Now, consider
\begin{align}
0&=\int\limits\limits_{0}^{T}\int\limits\limits_{\Omega} u(-\varphi_t-d\Delta_t\varphi-(L_1+k_2)\varphi)\nonumber\\
&= \int\limits\limits_{0}^{T}\int\limits\limits_{\Omega} \varphi (u_{t}-D\Delta_t u-(L_1+k_2)\int\limits\limits_0^T\int_\Omega u\varphi-\int\limits\limits_{0}^{T}\int\limits\limits_\Gamma  u d\nabla_t\varphi \cdot \eta+(D-d)\int\limits\limits_0^T\int_\Omega u\Delta_t\varphi\nonumber\\
&+\int\limits_{0}^{T}\int\limits\limits_\Gamma \varphi D\nabla_t u \cdot \eta+\int\limits_{\Omega}  u(x,0)\varphi(x,0)-\int\limits\limits_{\Omega} u(x,T)\varphi(\cdot, T)\nonumber\\
&\leq\int_0^T \int\limits_\Omega\varphi (f_1(u,v)-L_1 u)-\frac{D}{d}\int\limits_{0}^{T}\int\limits\limits_\Gamma ( u(L_1+k_2)\varphi+u)\nonumber+(D-d)\int\limits_0^T\int_\Omega u\Delta_t\varphi\\&+\int\limits\limits_{0}^{T}\int\limits\limits_\Gamma \varphi g_1(u,v)+\int\limits\limits_\Omega  u(x,0)\varphi(x,0)-\int\limits\limits_{\Omega} u(x,T)\varphi(\cdot, T).
\end{align}
For $v$ we have the similar equation with $f_1$ replaced by $f_2$ and $g_1$ replaced by $g_2$, i.e.,
\bea
0& \leq & \int_0^T \int\limits_\Omega\varphi (f_2(u,v)-L_1 v)-\frac{\tilde D}{d}\int\limits_{0}^{T}\int\limits\limits_\Gamma ( v(L_1+k_2)\varphi+u) +(\tilde D-d)\int\limits_0^T\int_\Omega v\Delta_t\varphi\nonumber\\
&&+\int\limits\limits_{0}^{T}\int\limits\limits_\Gamma \varphi g_2(u,v)+\int\limits\limits_\Omega  v(x,0)\varphi(x,0)-\int\limits\limits_{\Omega} v(x,T)\varphi(\cdot, T).
\eea

Summing these equations, and making use of $(V_{L1})$ and choosing $d=\min\lbrace D,\tilde D\rbrace$, gives
\begin{align}\label{eqn2}
&\int\limits_0^T\int_\Gamma (u+v) \le  \int\limits_0^T\int_\Gamma (u+v)(1+ (L_1 + k_2)  \vp) \le \nonumber\\
&\int\limits_0^T\int_\Omega L_1\varphi+\int\limits_0^T\int_\Gamma L_1\varphi+(D-d)\int\limits_0^T\int_\Omega u\Delta_t\varphi 
+(\tilde D-d)\int\limits\limits_0^T\int_\Omega v\Delta_t\varphi  \nonumber\\
&+\int\limits\limits_{\Omega} u_0(x)\varphi(x,0)-\int\limits_\Gamma  u(x,T)\varphi_T(x) +\int\limits_\Omega v_0(x)\varphi(x,0)-\int\limits_\Gamma  v(x,T)\varphi_T(x).
\end{align}
Since $\varphi_T$ is strictly positive, we can choose a $\delta > 0$ such that $\delta \le\varphi(x)$ for all $x\in\Omega$. Then (\ref{eqn2}) implies
\begin{align}\label{eqn2.1}
& \delta\int\limits_\Gamma  (u(x,T)+v(x,T))+\int\limits_0^T\int\limits_\Gamma  (u+v)\le \nonumber\\
& \int\limits_0^T\int\limits_\Omega L_1\varphi+\int\limits_0^T\int_\Gamma L_1\varphi+(D-d)\int\limits_0^T\int\limits_\Omega u\Delta_t\varphi +
(\tilde D-d)\int\limits_0^T\int\limits_\Omega v\Delta_t\varphi+\int\limits_{\Omega} ( u_0+v_0)\varphi(x,0).
\end{align}
Thus, there exist constants $C_1,C_2>0$, depending on $L_1$, $d$, $\varphi_T$, $u_0,v_0$, $D,\tilde D$, and at most exponentially on $T$, such that 
 \begin{align}\label{eqn2.2}
\delta\int\limits_\Gamma (u(x,T)+v(x,T))+\int\limits_0^T\int\limits_\Gamma (u+v)\le C_1+C_2\int_0^T\int\limits_\Omega (u+v)
\end{align}
Now, return to (\ref{interior}), and integrate both sides with respect to  $t$ to obtain 
\begin{align}\label{eqn2.3}
\int\limits_\Omega( u+v)dx\le L_1\left(\int\limits_0^t\int\limits_\Omega (u+v)+\int\limits_0^t\int\limits_\Gamma (u+v)+t|\Gamma|+t|\Omega|\right)+\int\limits_\Omega(u_0+v_0)
\end{align}
The second term on the right hand side of (\ref{eqn2.3}) can be bounded above by $L_1$ times the right hand side of (\ref{eqn2.2}). Using this estimate, and Gronwall's inequality, we can obtain a bound for $\int\limits_0^T\int_\Omega (u+v)$ that depends on $T$. Placing this on the right hand side of (\ref{eqn2.2}) gives a bound for $\int\limits_\Gamma ( u(x,T)+v(x,T))$ that depends on $T$. Applying this to the second integral on the right hand side of (\ref{interior}), and using Gronwall's inequality, gives the result.
\end{proof}
{\bf Remark:\/} The above proof can be imitated for $m$ components to obtain $L_1$ estimates for solutions of (\ref{m-grow}).

For sake of completeness of our arguments, we state below the Lemma 3.3 proved in \cite{V1}.
\begin{lemma}\label{omega}
Given $\gamma\geq 1$ and $\epsilon>0$, there exists $C_{\epsilon,\gamma}>0$ such that 
\be\label{best} \Vert v\Vert_{2,\partial\Omega}^2 \leq \epsilon\Vert\nabla v\Vert_{2,\Omega}^2+C_{\epsilon,\gamma}\Vert v^{\frac{2}{\gamma}}\Vert_{1,\Omega}^{\gamma}\ee
and 
\be \Vert v\Vert_{2,\Omega}^2 \leq \epsilon\Vert\nabla v\Vert_{2,\Omega}^2+C_{\epsilon,\gamma}\Vert v^{\frac{2}{\gamma}}\Vert_{1,\Omega}^{\gamma}\ee
for all $v\in H^1(\Omega)$.
\end{lemma}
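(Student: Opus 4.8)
The plan is to prove both estimates by a compactness (contradiction) argument of Ehrling type, treating the right-hand quantity $\|v^{2/\gamma}\|_{1,\Omega}^\gamma$ as subordinate to the $H^1(\Omega)$ norm through the compact embeddings $H^1(\Omega)\hookrightarrow L^2(\Omega)$ and $H^1(\Omega)\hookrightarrow L^2(\partial\Omega)$; the latter is compact precisely because $\partial\Omega$ is of class $C^{2+\mu}$. The decisive observation is that $\|v^{2/\gamma}\|_{1,\Omega}^\gamma=\big(\int_\Omega |v|^{2/\gamma}\,dx\big)^{\gamma}$ need never be treated as a norm: I will only ever use that its vanishing forces $v=0$, which keeps the argument uniform across the whole range $\gamma\ge 1$, and in particular handles $\gamma>2$, where $2/\gamma<1$.

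First I would establish the second (interior) inequality. Suppose it fails for some $\epsilon>0$; then there is a sequence $v_k\in H^1(\Omega)$ with $\|v_k\|_{2,\Omega}^2> \epsilon\|\nabla v_k\|_{2,\Omega}^2+k\,\|v_k^{2/\gamma}\|_{1,\Omega}^\gamma$. Normalizing $\|v_k\|_{2,\Omega}=1$, both nonnegative terms on the right are strictly less than $1$, so $\|\nabla v_k\|_{2,\Omega}^2<1/\epsilon$ and $\int_\Omega |v_k|^{2/\gamma}\to 0$. Thus $\{v_k\}$ is bounded in $H^1(\Omega)$; by Rellich--Kondrachov a subsequence converges strongly in $L^2(\Omega)$ and almost everywhere to some $v$ with $\|v\|_{2,\Omega}=1$. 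Fatou's lemma then gives $\int_\Omega|v|^{2/\gamma}\le\liminf_k\int_\Omega|v_k|^{2/\gamma}=0$, so $v=0$ almost everywhere, contradicting $\|v\|_{2,\Omega}=1$.

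For the first (trace) inequality I would run the same scheme, now using the interior inequality just proved to close the $H^1$ bound. Assuming failure for some $\epsilon>0$, I pick $v_k$ violating the estimate and normalize $\|v_k\|_{2,\partial\Omega}=1$; as before $\|\nabla v_k\|_{2,\Omega}^2<1/\epsilon$ and $\int_\Omega|v_k|^{2/\gamma}\to0$. The interior inequality applied to $v_k$ then bounds $\|v_k\|_{2,\Omega}$, so $\{v_k\}$ is again bounded in $H^1(\Omega)$. Since the trace map $H^1(\Omega)\to L^2(\partial\Omega)$ is compact, a subsequence has traces converging strongly in $L^2(\partial\Omega)$ to the trace of the weak $H^1$-limit $v$, whose $L^2(\partial\Omega)$-norm is therefore $1$; but the interior argument forces $v=0$, so its trace vanishes, a contradiction.

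The main obstacle, and the reason I favour this route over a direct interpolation, is precisely the case $\gamma>2$: there $2/\gamma<1$, the expression $\|v\|_{2/\gamma,\Omega}$ is only a quasi-norm, and a Gagliardo--Nirenberg-plus-Young argument becomes delicate. That alternative does work when organized carefully: one writes $\|v\|_{2,\Omega}\le C\big(\|\nabla v\|_{2,\Omega}^{a}\|v\|_{2/\gamma,\Omega}^{1-a}+\|v\|_{2/\gamma,\Omega}\big)$ with $a=\frac{n(\gamma-1)}{n(\gamma-1)+2}\in[0,1)$, squares, and absorbs the product term into $\epsilon\|\nabla v\|_{2,\Omega}^2$ by Young's inequality (noting $\|v\|_{2/\gamma,\Omega}^2=\|v^{2/\gamma}\|_{1,\Omega}^\gamma$), thereby exhibiting the explicit dependence of $C_{\epsilon,\gamma}$ on $\epsilon$ and $\gamma$; the boundary version then follows from the multiplicative trace inequality $\|v\|_{2,\partial\Omega}^2\le C\|v\|_{2,\Omega}\|v\|_{H^1(\Omega)}$ combined with the interior estimate. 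However, verifying the interpolation exponent and the quasi-norm manipulations uniformly in $\gamma$ is exactly the bookkeeping the compactness argument avoids, so I would present the compactness proof as the main line and invoke the interpolation identity only to record the form of $C_{\epsilon,\gamma}$.
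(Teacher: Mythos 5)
Your argument is correct. Note first that the paper itself offers no proof of this lemma: it is stated verbatim as ``Lemma 3.3 proved in \cite{V1}'' and simply imported, so there is no in-paper argument to compare against line by line. Your Ehrling-type compactness proof is a valid, self-contained substitute. The key points all check out: the three quantities $\Vert v\Vert_{2,\Omega}^2$, $\Vert\nabla v\Vert_{2,\Omega}^2$ and $\Vert v^{2/\gamma}\Vert_{1,\Omega}^{\gamma}=\bigl(\int_\Omega |v|^{2/\gamma}\,dx\bigr)^{\gamma}$ are all homogeneous of degree $2$ in $v$ (the last because $\bigl(\int|\lambda v|^{2/\gamma}\bigr)^{\gamma}=\lambda^2\bigl(\int|v|^{2/\gamma}\bigr)^{\gamma}$), so the normalizations $\Vert v_k\Vert_{2,\Omega}=1$ and $\Vert v_k\Vert_{2,\partial\Omega}=1$ are legitimate; the strict inequality in the contradiction hypothesis rules out the zero function, so normalization is always possible; Rellich--Kondrachov, a.e.\ convergence along a further subsequence, and Fatou force the limit to vanish; and for the boundary case the compactness of the trace $H^1(\Omega)\to L^2(\partial\Omega)$ (available since $\Gamma\in C^{2+\mu}$) together with the already-proved interior estimate closes the $H^1$ bound. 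Treating $\int_\Omega|v|^{2/\gamma}$ only through ``vanishing implies $v=0$'' is exactly the right way to sidestep the quasi-norm issue for $\gamma>2$. The route more commonly taken in this literature (and sketched by you as the alternative) is the constructive one via Gagliardo--Nirenberg and the multiplicative trace inequality $\Vert v\Vert_{2,\partial\Omega}^2\le C\Vert v\Vert_{2,\Omega}\Vert v\Vert_{H^1(\Omega)}$ followed by Young's inequality; that version yields an explicit $C_{\epsilon,\gamma}$, whereas your compactness argument is shorter and uniform in $\gamma$ but gives no quantitative constant. Either is acceptable here, since the lemma is only used qualitatively.
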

{\bf Proof of Theorem \ref{martinthm}:}
If $T_{\max}=\infty$, then there is nothing to prove. So, assume $T=T_{\max}<\infty$. We first claim  that under the given assumptions, 
\be|| u||_{p, \Om_T} \leq C( p, h(T), L_1, D, \tilde D, \Om, ||det A(t)||_\infty). \ee 
 We can assume without loss of generality that $b_1=b_2=1$ in $(V_{L1})$. 
 %\be
%\|v(\cdot,t)\|_{\infty,\Omega}\le h(t)  \mbox{ for all } 0\le t<T_{\max},\ee
  Let $1<p<\infty$, set $p'=\frac{p}{p-1}$ and choose  $\xi\in L_{p'}(\Omega_T)$ such that \be \xi \ge 0 \mbox{ and }\|\xi\|_{p',\Omega_T}=1.\ee Furthermore, let $L_2\geq \max \lbrace \frac{\tilde D L_1}{D},L_1\rbrace$ and suppose $\varphi$ solves 
\begin{align}\label{phieq}
\varphi_t+D\Delta_t\varphi&=-L_1 \varphi-\xi \quad\text{in }\Omega_T,\nonumber\\
D \nabla_t\varphi\cdot \eta&=L_2\varphi \quad\text{on }\Ga_T,\\
\varphi&=0 \quad \text{in }\Omega\times\left\{T\right\}.\nonumber
\end{align}
Though (\ref{phieq}) may appear to be a backwards heat equation, the substitution $\tau=T-t$ immediately reveals that it is actually a forward heat equation. 
Arguing as in the proof of the  Theorem \ref{local}, we conclude that  $\varphi\ge 0$. In addition, from Lemma \ref{lady}, there is a constant $C(p, D, \tilde D,\Omega, L_1) >0 $, and independent of $\xi$ such that 
\be\label{phibound} \|\varphi\|_{p',\Omega_T}^{(2,1)}\le C.\ee
Multiply (\ref{phieq}) with $(u+v)$ and integrating by parts we have 
\begin{align}\label{duality1}
&\int\limits\limits\limits_0^T\int\limits_\Omega(u+v)\xi dxdt\nonumber\\
&= \int\limits\limits\limits_0^T\int\limits_\Omega(u+v)(-\varphi_t-D\Delta_t\varphi-L_1\varphi)dxdt\nonumber\\
%&=\int\limits\limits\limits_\Omega(u_0+v_0)\varphi(x,0)dx
%-\int\limits\limits\limits_0^T\int\limits_\Omega(u+v)(d\Delta_t\varphi+L_1\varphi)dxdt\nonumber\\
&\le\int\limits_\Omega(u_0+v_0)\varphi(x,0)dx-\int\limits\limits\limits_0^T\int\limits_\Omega(u+v)D\Delta_t\varphi dxdt-\int\limits\limits\limits_0^T\int\limits_\Omega L_1(u+v)\varphi dxdt.
\end{align}
Multiplying equation (\ref{grow}), integrating by parts and  using $(V_{L1})$ we get
\begin{align}\label{duality1.1} 
&\int\limits\limits\limits_0^T\int\limits_\Omega\varphi(u_t+v_t)dxdt  \nonumber\\
&= \int\limits\limits\limits_0^T\int\limits_\Omega\varphi(f+g)dxdt
+ \int\limits\limits\limits_0^T\int\limits_{\Omega}\varphi (D \Delta_t u+\tilde D \Delta_t v) dxdt -\int\limits_0^T\int\limits_{\Omega}a(t)(u+v) dxdt \nonumber\\
& \leq   \int\limits_0^T\int\limits_\Omega\varphi  L_1(u+v+1)dxdt
+ \int\limits_0^T\int\limits_{\Omega}\varphi (D \Delta_t u+\tilde D \Delta_t v) dxdt 
-\int\limits_0^T\int\limits_{\Omega}a(t)(u+v) dxdt \nonumber \\
& \leq  \int\limits_0^T\int\limits_\Omega\varphi  L_1(u+v+1)dxdt
+ \int\limits_0^T\int\limits_{\Omega}( u D\Delta_t\varphi +\tilde D v\Delta_t\varphi ) dxdt \nonumber\\ &-\int\limits_0^T\int\limits_{\Omega}a(t)(u+v) dxdt -\int\limits_0^T\int\limits_\Ga L_2\varphi u-\int\limits_0^T\int\limits_\Ga \frac{\tilde D}{D}L_2\varphi v
\end{align}
Combining (\ref{duality1}) and (\ref{duality1.1}), we have
\begin{align}\label{duality1.2}
\int\limits_0^T\int\limits_\Omega(u+v)\xi dxdt &\le \int\limits_\Omega(u_0+v_0)\varphi(x,0)dx+\int\limits_0^T\int\limits_\Omega(\tilde D-D)v\Delta \varphi dxdt\nonumber\\
&+\int\limits_0^T\int\limits_\Omega L_1\varphi dxdt+\int\limits_0^T\int\limits_\Ga L_1 \varphi dxd\sigma-\int\limits_0^T\int\limits_{\Omega} a(t)(u+v) dxdt \nonumber\\
& -\int\limits_0^T\int\limits_\Ga L_2\varphi u-\int\limits_0^T\int\limits_\Ga \frac{\tilde D}{D}L_2\varphi v \nonumber\\
&\leq \int\limits_\Omega(u_0+v_0)\varphi(x,0)dx+\int\limits_0^T\int\limits_\Omega(\tilde D-D)v\Delta \varphi dxdt\nonumber\\
&+\int\limits_0^T\int\limits_\Omega L_1\varphi dxdt+\int\limits_0^T\int\limits_\Ga L_1 \varphi dxd\sigma-\int\limits_0^T\int\limits_{\Omega}a(t)(u+v) dxdt 
\end{align}
By assumption,  $\|v(\cdot,t)\|_{\infty,\Omega}\le h(t)$ and  (\ref{phibound}) implies $\| \varphi\|_{p',\Omega_T}^{(2,1)}\le C_0$. Also, integrating (\ref{phieq}) reveals that
\begin{align}
\int\limits_{\Omega}\varphi(\cdot,0)=-\int\limits_{0}^T\int\limits_{\partial\Omega}L_2\varphi+\int\limits_{0}^T\int\limits_{\Omega}L_1\varphi+\int\limits_{0}^T\int\limits_{\Omega}\xi.
\end{align} 
Therefore, $\|\varphi(\cdot,0)\|_{1,\Omega}$ can be bounded independent of $\xi$ by using the norm bound on $\varphi$ and the fact that  $\|\xi\|_{p',\Omega_T}=1$. In addition, the  trace embedding theorem implies $\|\varphi\|_{1,\Ga_T}$ can be bounded in terms of $\|\varphi\|_{p',\Omega_T}^{(2,1)}$, which can be bounded independent of $\xi$, for the same reason as above.

 Therefore, by applying duality to (\ref{duality1.2}), we see that
  \be\label{ubound} \|u\|_{p,\Omega_T} \leq C( p, h(T), L_1, D, \tilde D, \Om). \ee Also, since $1<p<\infty$ is arbitrary, we have this estimate for every $1<p<\infty$. Moreover, the sup norm bound on $v$, the $L_p(\Omega_T)$ bounds on $u$ for all $1<p<\infty$, and $(V_{Poly})$ imply we have $L_q(\Omega_T)$ bounds on $f(u,v)$ and $g(u,v)$ for all $1<q<\infty$. \\

Now,we use the bounds above and assumption (\ref{nearlykoua}) to show $\|u\|_{p,\Ga_T}$ is bounded  for all $1<p<\infty$.  To this end, we employ a modification of an argument given in \cite{RefWorks:8} for the case $m=2$. Suppose $p\in \mathbb{N}$ such that $p\ge 2$, and choose a constant  $ \Theta>\max\left\{K,\frac{D+\tilde D}{2\sqrt{D\tilde D}}\right\}$. 
%Define $w:=(u,v)$, and if
For  $a,b\ge 0$ we denote $w^{(a,b)}:=u^av^b$ and define the polynomial
\be\label{2poly} 
P(u,v,p, \Theta^{\beta^2}) = \sum_{\beta=0}^p \frac{p!}{\beta!(p-\beta)!}\Theta^{\beta^2}w^{(\beta,p-\beta)}.
\ee
In general, to fix notation we let 
\be 
P(u,v,p, \Theta^{c(\beta)} ) = \sum_{\beta=0}^p \frac{p!}{\beta!(p-\beta)!}\Theta^{c(\beta)}w^{(\beta,p-\beta)}.
\ee
where $c(p)$ is a prescribed function of $p$. Note that
\begin{align}\label{Lprime1}
\frac{\pa P}{\pa t}&=\sum\limits_{\beta=0}^p \frac{p!}{\beta!(p-\beta)!}\Theta^{\beta^2}\left(\beta w^{(\beta-1,p-\beta)}u_t+(p-\beta)w^{(\beta,p-\beta-1)}v_t\right)\nonumber\\
&=\left(pv^{p-1}v_t+p\Theta^{p^2}u^{p-1}u_t\right)dx+X_1+X_2,
\end{align}
where
\begin{align}\label{Lprime2.1}
X_1&= \sum\limits_{\beta=1}^{p-1} \frac{p!}{(\beta-1)!(p-\beta)!}\Theta^{\beta^2} w^{(\beta-1,p-\beta)}u_t \nonumber\\
&=p\Theta v^{p-1}u_t dx+ \sum\limits_{\beta=2}^{p-1} \frac{p!}{(\beta-1)!(p-\beta)!}\Theta^{\beta^2} w^{(\beta-1,p-\beta)}u_t \nonumber\\
&= p\Theta v^{p-1}u_t dx+\sum\limits_{\beta=1}^{p-2} \frac{p!}{\beta!(p-\beta-1)!}\Theta^{(\beta+1)^2} w^{(\beta,p-\beta-1)}(u_1)_t \nonumber\\
&= p\Theta v^{p-1}u_t dx+ \sum\limits_{\beta=1}^{p-2} \frac{p!}{\beta!(p-\beta-1)!}\Theta^{\beta^2} w^{(\beta,p-\beta-1)}\Theta^{2\beta+1}u_t 
\end{align}
and
\begin{align}\label{Lprime2.2}
X_2&= \sum\limits_{\beta=1}^{p-1} \frac{p!}{\beta!(p-\beta-1)!}\Theta^{\beta^2} w^{(\beta,p-\beta-1)}v_t \nonumber\\
&= p\Theta^{(p-1)^2}u^{p-1}v_t dx+ \sum\limits_{\beta=1}^{p-2} \frac{p!}{\beta!(p-\beta-1)!}\Theta^{\beta^2} w^{(\beta,p-\beta-1)}v_t .
\end{align}

Combining (\ref{Lprime1})-(\ref{Lprime2.2}) gives
\be\label{Pprime}
\frac{\pa P}{\pa t}=\sum\limits_{\beta=0}^{p-1}\frac{p!}{\beta!(p-1-\beta)!}\Theta^{\beta^2}w^{(\beta,p-1-\beta)}\left(\Theta^{2\beta+1}u_t+v_t\right)
\ee
Clearly, above steps hold even if we differentiate with respect to any variable $x_i$, $i = 1, 2, 3$, i.e.,
\be \frac{\pa P}{\pa x_i }=\sum\limits_{\beta=0}^{p-1}\frac{p!}{\beta!(p-1-\beta)!}\Theta^{\beta^2}w^{(\beta,p-1-\beta)}\left(\Theta^{2\beta+1}\frac{ \pa u}{\pa x_i}+ \frac{\pa v }{\pa x_i}\right)
\ee
Using the fact that $(u,v)$ satisfies the equation (\ref{grow}).
\be \frac{\pa P}{\pa t} = \sum\limits_{\beta=0}^{p-1}\frac{p!}{\beta!(p-1-\beta)!}\Theta^{\beta^2}w^{(\beta,p-1-\beta)}\left(\Theta^{2\beta+1} (\L u + f_1(u,v)) + \tilde \L v + f_2(u,v)\right).
\ee
Integrating over $\Om$, we have
\be\label{Lprime4}
\int\limits\limits\limits_\Om \frac{\pa P}{\pa t} \,dx = I + II + III
\ee
where
\begin{align}\label{Lprime5}
I=\int\limits\limits\limits_\Omega\sum_{\beta=0}^{p-1}\frac{p!}{\beta!(p-1-\beta)!}\Theta^{\beta^2}w^{(\beta,p-1-\beta)}\left(\Theta^{2\beta+1}f_1(u,v)+f_2(u,v)\right)dx
\end{align}
and
\begin{align}\label{Lprime6}
II=\int\limits\limits\limits_\Omega\sum_{\beta=0}^{p-1}\frac{p!}{\beta!(p-1-\beta)!}\Theta^{\beta^2}w^{(\beta,p-1-\beta)}\left(\Theta^{2\beta+1}D\Delta_t u+\tilde D\Delta_t v\right)dx.
\end{align}
\begin{align}\label{Lprime100}
III &=-\int\limits_\Omega\sum_{\beta=0}^{p-1}\frac{p!}{\beta!(p-1-\beta)!}\Theta^{\beta^2}w^{(\beta,p-1-\beta)} a(t)\left(\Theta^{2\beta+1} u + v \right) dx \nonumber\\
&\leq  - k_1   C(p,\Theta) \int\limits_\Omega     (u+v)^p\, dx.
\end{align}
Choosing $\Theta \geq K$  and applying (\ref{nearlykoua}),  we have 
\begin{align}\label{Lprime101}
I &\leq \int\limits_\Omega\sum_{\beta=0}^{p-1}\frac{p!}{\beta!(p-1-\beta)!}\Theta^{\beta^2}w^{(\beta,p-1-\beta)}L_{\Theta}\left( u+ v + 1\right)dx \nonumber\\
&\leq  L_{\Theta} \int\limits_\Omega [(u + v)^p +  (u + v)^{p-1}] \, dx \nonumber\\
&\leq C( p, h(T), L_1, D, \tilde D, \Om,||det A(t)||_\infty, L_\Theta) \quad \mbox{from (\ref{ubound}).}
\end{align}
 While 
\begin{align}\label{Lprime7}
 II&=\int\limits_\Omega\sum_{\beta=0}^{p-1}\frac{p!}{\beta!(p-1-\beta)!}\Theta^{\beta^2}w^{(\beta,p-1-\beta)}\left(\Theta^{2\beta+1}D\Delta_t u+\tilde D\Delta_t v\right)dx 
 \nonumber\\
&=\int\limits_{\Gamma}\sum_{\beta=0}^{p-1}\frac{p!}{\beta!(p-1-\beta)!}\Theta^{\beta^2}w^{(\beta,p-1-\beta)}\left(\Theta^{2\beta+1}g_1(u,v)+g_2(u,v)\right)dx \nonumber\\
&  -\int\limits_\Omega\sum_{\beta=0}^{p-1}\frac{p!}{\beta!(p-1-\beta)!} \Theta^{\beta^2}<  \Theta^{2\beta+1}D \nabla_t w^{(\beta,p-1-\beta)} , \nabla_t u >   \, dx \nonumber\\
&-\int\limits_\Omega\sum_{\beta=0}^{p-1}\frac{p!}{\beta!(p-1-\beta)!}\Theta^{\beta^2}<  \tilde D \nabla_t w^{(\beta,p-1-\beta)} , \nabla_t v > \, dx.
\end{align} 
We have  $\nabla_t w^{(\beta,p-1-\beta)}  = \beta u^{\beta -1}  v^{(p-1-\beta)} \nabla_t u + u^\beta (p-1-\beta) v^{(p-2-\beta)} \nabla_t v$, hence we can write
\begin{align}
&\int\limits_\Omega\sum_{\beta=0}^{p-1}\frac{p!}{\beta!(p-1-\beta)!} \Theta^{\beta^2} \left(<  \Theta^{2\beta+1}D \nabla_t w^{(\beta,p-1-\beta)} , \nabla_t u >   + <  \tilde D \nabla_t w^{(\beta,p-1-\beta)} , \nabla_t v > \, dx\right) \, dx \nonumber\\ 
&\geq \int\limits_\Omega\sum_{\beta=0}^{p-2}\frac{p!}{\beta!(p-2-\beta)!}  \Theta^{\beta^2} w^{(\beta,p-2-\beta)} \sum\limits_{k=1}^3 \frac{1}{\la_k(t)^2}  \left<B(\Theta, D, \tilde D) \left(\begin{array}{c}  \pa_{ x_k} u\\ \pa_{ x_k} v \end{array}\right),  ( \pa_{ x_k} u, \pa_{ x_k} v )  \right>  \nonumber
\end{align}
%&\leq \int\limits_{\Gamma}\sum_{\beta=0}^{p-1}\frac{p!}{\beta!(p-1-\beta)!}\Theta^{\beta^2}w^{(\beta,p-1-\beta)}L_{\Theta}\left(u +v +1\right)dx \nonumber\\
%&  -\int\limits_\Omega\sum_{\beta=0}^{p-2}\frac{p!}{\beta!(p-1-\beta)!}\Theta^{\beta^2}w^{(\beta,p-2-\beta)}  \nonumber\\
%&\leq  C( \Theta, L_\Theta) \int\limits_{\Gamma} \left((u +v)^p +1\right)dx   -\int\limits_\Omega\sum_{\beta=0}^{p-2}\frac{p!}{\beta!(p-2-\beta)!}\Theta^{\beta^2}w^{(\beta,p-2-\beta)} 
where 
$$ B(\Theta, D, \tilde D)=\begin{pmatrix} D\Theta^{4\beta+4}&\frac{(D+\tilde D)}{2}\Theta^{2\beta+1}\\\frac{(D+\tilde D)}{2}\Theta^{2\beta+1}& \tilde D\end{pmatrix}.$$
Again choosing $\Theta$ sufficiently large so that the matrix $B(\Theta, D, \tilde D)$ is positive definite and recalling that $ 0< \La_1 \leq \frac{1}{\la_i^2(t)} \leq \La_2$, there exists $\alpha_{\Theta,p}>0$ such that
\begin{align}\label{Lprime07}
&\int\limits_\Omega\sum_{\beta=0}^{p-1}\frac{p!}{\beta!(p-1-\beta)!} \Theta^{\beta^2} \left(<  \Theta^{2\beta+1}D \nabla_t w^{(\beta,p-1-\beta)} , \nabla_t u >   + <  \tilde D \nabla_t w^{(\beta,p-1-\beta)} , \nabla_t v > \, dx\right) \, dx \nonumber\\ 
& \geq  \La_1^2\alpha_{\Theta,p}\int\limits_\Omega\sum_{\beta=0}^{p-2}\frac{p!}{\beta!(p-2-\beta)!}  \Theta^{\beta^2} w^{(\beta,p-2-\beta)} \sum\limits_{k=1}^3 ( \pa_{ x_k} u^2 + \pa_{ x_k} v^2 ) \nonumber \\
&\geq  \La_1^2\alpha_{\Theta,p}\int\limits_\Omega \left(|\nabla (u)^{p/2}|^2+|\nabla (v)^{p/2}|^2\right)dx
\end{align}
Substituting (\ref{Lprime100}), (\ref{Lprime101}),   (\ref{Lprime7}) and (\ref{Lprime07}) in (\ref{Lprime4}) we get 
\begin{align}\label{Lprime8.5}
&\frac{\pa P}{\pa t} +  \La_1^2\alpha_{\Theta,p}\int\limits_\Omega \left(|\nabla (u)^{p/2}|^2+|\nabla (v)^{p/2}|^2\right)dx +  k_1   C(p,\Theta) \int\limits_\Omega     (u+v)^p\, dx \nonumber\\
&\leq C( p, h(T), L_1, D, \tilde D, \Om, L_\Theta) + N_{p,\Theta,\Ga}\left[\int\limits_\Ga\left(u^p+v^p\right)d\sigma+1\right]. 
\end{align}
 for some constant $N_{p,\Theta,\Ga}$.
%Recalling that $ 0< \La_1 \leq \frac{1}{\la_i(t)} \leq \La_2$ and  $\Theta$ is chosen such that the matrix  $B(\Theta, D, \tilde D)$ is positive definite, there exists $\alpha_{\Theta,p}>0$ such that
%\begin{align}\label{Lprime8}
%\frac{\partial P}{\partial t}+&\alpha_{\Theta,p}\La_1\int\limits_\Omega \left(|\nabla (u)^{p/2}|^2+|\nabla (v)^{p/2}|^2\right)dx \nonumber\\
%& \le I + \int\limits_\Ga\sum_{\beta=0}^{p-1}\frac{p!}{\beta!(p-1-\beta)!}\Theta^{\beta^2}w^{(\beta,p-1-\beta)}L_{\Theta^{2\beta+1}}\left(u+v+1\right)d\sigma \\\quad& -\int\limits\limits\limits_\Omega\sum_{\beta=0}^{p-1}\frac{p!}{\beta!(p-1-\beta)!}\Theta^{\beta^2}w^{(\beta,p-1-\beta)} a(t)\left(\Theta^{2\beta+1} u + v \right) dx.\nonumber\\
%&\le I+N_{p,\Theta,\Ga}\left[\int\limits_\Ga\left(u^p+v^p\right)d\sigma+1\right]
%\end{align} 
%from (\ref{nearlykoua}), for some $N_{p,\Theta,\Ga}>0$.
%\begin{align}\label{Lprime8.5}
%\frac{\partial P}{\partial t}+&\alpha_{\Theta,p}\La_1\int\limits_\Omega \left(|\nabla (u)^{p/2}|^2+|\nabla (v)^{p/2}|^2\right)dx +\int\limits\limits\limits_\Omega \tilde L(\Theta, p, \beta,k_1)\left(u + v\right) dx \\ \nonumber& \le I+N_{p,\Theta,\Ga}\left[\int\limits_\Ga\left(u^p+v^p\right)d\sigma+1\right] \nonumber
%\end{align}
 Applying Lemma \ref{omega}  to the functions $u^{p/2}$ and $v^{p/2}$ with $\ga = p$ and using (\ref{best}),  there exists $\tilde N_{p,\Theta,\Omega}>0$ such that 
\begin{align}\label{Lprime8.7}
2N_{p,\Theta,\Gamma}\int_{\Gamma}\left(u^p+v^p\right)d\sigma \leq \alpha_{\Theta,p}\La_1\int\limits_\Omega \left(|\nabla (u)^{p/2}|^2+|\nabla (v)^{p/2}|^2\right)dx+\tilde N_{p,\Theta,\Gamma}\left(\int_{\Omega}(u+v) dx\right)^p
\end{align}
Adding $(\ref{Lprime8.5})$ with $(\ref{Lprime8.7})$, we get
\begin{align}\label{Lprime9}
\frac{\partial P}{\partial t}+N_{p,\Theta,\Ga}\int\limits_\Ga(u^p+v^p)d\sigma\le  C( p, h(T), L_1, D, \tilde D,  M, L_\Theta) +\tilde{N}_{p,\Theta,\Ga}\left(\int\limits_{\Omega}(u+v)\ dx \right)^p+N_{p,\Theta,\Ga}.
\end{align}
Finally, if we integrate over time, 
%Note that $\int\limits\limits\limits_0^T Idx$ is bounded because $(V_{Poly})$ holds, and as we have shown above, $\|(u,v)\|_{q,\Omega_T}$ is bounded  for all $1<q<\infty$. 
we find that $\|u\|_{p,\Ga_T}$ is bounded in terms of $p$, $\Ga$, $\Omega$, $\Theta$, $h(T)$, $w_1$, $w_2$ and $\|v\|_{p,\Omega_T}$. Since this holds for every natural number $p\ge 2$, we can use the assumption $(V_{Poly})$ and the bounds above, along with Proposition \ref{Holder} to conclude that $\|(u,v)\|_{\infty,\Omega_T}<\infty$. From Theorem \ref{local}, this contradicts our assumption that $T_{\max}<\infty$. Therefore, $T_{\max}=\infty$, and Theorem \ref{martinthm} is proved.
\hfill
\qed

For $m \geq 2$ components, we first obtain the following $L_p$ estimates. 
\begin{lemma} \label{lpestimate}
Suppose that $(V_N)$, $(V_F)$, $(V_{QP})$ and $(V_{L})$ are satisfied, and $u$ is the unique, componentwise nonnegative, maximal solution to $(\ref{grow})$.  If $1<p<\infty$ and $T=T_{max}<\infty$, then $\|u\|_{p,\Omega_T}$ and $\|u\|_{p,\Gamma_T}$ are bounded.
\end{lemma}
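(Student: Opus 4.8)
The plan is to reproduce, for the full $m$-component system, the two estimates carried out in the Proof of Theorem~\ref{martinthm}: an interior bound on $\|u_i\|_{p,\Omega_T}$ obtained by duality, followed by a boundary bound on $\|u_i\|_{p,\Gamma_T}$ obtained from a Lyapunov functional together with the trace inequality of Lemma~\ref{omega}. The foundation is an $L_1$ estimate. Since $(V_L)$ with the admissible choice $a=(K,\dots,K,1)$ controls both $\sum_j a_j f_j$ and $\sum_j a_j g_j$ linearly, integrating the corresponding weighted combination of the equations in (\ref{m-grow}) over $\Omega$ and using $a(t)\le k_2$ from (\ref{k}) bounds $\|K\sum_{i<m}u_i+u_m\|_{1,\Omega}$, and hence $\sum_i\|u_i(\cdot,t)\|_{1,\Omega}$ and $\sum_i\|u_i(\cdot,t)\|_{1,\Gamma}$, on $[0,T)$, exactly as in the Remark following the $L_1$-estimates Lemma. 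I would fix an integer $p\ge 2$, prove the two estimates, and interpolate to all $1<p<\infty$.

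For the interior estimate I would run the duality scheme of (\ref{phieq})--(\ref{duality1.2}), but applied to the weighted sum $z=\sum_j a_j u_j$ with $a=(K,\dots,K,1)$ rather than to $u+v$. Given $\xi\ge0$ with $\|\xi\|_{p',\Omega_T}=1$, let $\varphi\ge0$ solve the adjoint problem with source $\xi$, the weighted Robin boundary condition, and zero terminal data; Lemma~\ref{lady} gives $\|\varphi\|^{(2,1)}_{p',\Omega_T}\le C$ independent of $\xi$. Testing the equation for $z$ against $\varphi$ and integrating by parts, the reaction term is controlled linearly by $(V_L)$, the dilation term $-\int a(t)\,z\,\varphi$ is nonpositive because $a(t)\ge k_1>0$, and the boundary reaction is absorbed by the Robin term. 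The one term needing care is the mismatch of diffusion coefficients, namely $\int\sum_i(d_i-d)a_i u_i\,\Delta_t\varphi$ with $d=\min_i d_i$; this is handled using the comparability $d\,z\le\sum_j a_j d_j u_j\le(\max_i d_i)\,z$ together with the $W^{2,1}_{p'}$ bound on $\varphi$. Duality then yields $\|z\|_{p,\Omega_T}\le C$, and since all $a_i>0$ and the $u_i$ are nonnegative, $\|u_i\|_{p,\Omega_T}\le C$ for each $i$. The coercivity furnished by the dilation term $-a(t)z$, which is absent on a fixed domain, is what replaces the a~priori bounded component used in Theorem~\ref{martinthm}.

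For the boundary estimate I would introduce the $m$-component Lyapunov functional
\[
P_p(t)=\int_\Omega\sum_{|\alpha|=p}\frac{p!}{\alpha!}\,\Theta^{Q(\alpha)}\,u^\alpha\,dx,\qquad u^\alpha=\prod_{i=1}^m u_i^{\alpha_i},
\]
summed over multi-indices $\alpha$ with $|\alpha|=p$, where $\Theta>K$ is large and $Q$ is a strictly convex quadratic weight generalizing the scalar choice $\beta^2$ in (\ref{2poly}) (for $m=2$, $Q(\beta,p-\beta)=\beta^2$ recovers that functional). Differentiating $P_p$ in time, substituting (\ref{m-grow}), and integrating by parts splits $\tfrac{d}{dt}P_p$ into a reaction term, a diffusion term, and a dilation term, precisely as in (\ref{Lprime4})--(\ref{Lprime07}). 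The reaction term reorganizes into sums $\sum_{|\beta|=p-1}(\cdots)\,u^\beta\sum_j a_j(\beta,\Theta)f_j(u)$ whose coefficient vectors lie in the admissible cone of $(V_L)$, so it is bounded linearly and then by the interior $L_p$ bound just established. Integrating the diffusion term by parts produces a boundary integral of the $g_i$'s (controlled by $(V_L)$ on $\Gamma$) plus an interior quadratic form in the gradients; choosing $\Theta$ large makes the associated $m\times m$ matrices positive definite, and with $\Lambda_1\le\lambda_i^{-2}(t)$ from (\ref{k}) this yields the dissipation $c_{\Theta,p}\int_\Omega\sum_i|\nabla u_i^{p/2}|^2$.

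It remains to absorb the boundary contributions, and here I would use Lemma~\ref{omega} with $\gamma=p$ applied to $v=u_i^{p/2}$: by (\ref{best}) each $\int_\Gamma u_i^{p}$ becomes $\epsilon\int_\Omega|\nabla u_i^{p/2}|^2+C_\epsilon\|u_i\|_{1,\Omega}^{p}$, the gradient piece being absorbed by the dissipation above and the remaining piece controlled by the $L_1$ bound. Collecting the inequalities gives $\tfrac{d}{dt}P_p+c\int_\Gamma\sum_i u_i^{p}\le C+C'\big(\sum_i\|u_i\|_{1,\Omega}\big)^{p}$; integrating in $t$ and applying Gronwall bounds $\|u_i\|_{p,\Gamma_T}$, while $P_p$ itself controls $\|u_i\|_{p,\Omega_T}$. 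The main obstacle I anticipate is twofold: in the interior step, carrying the weighted-sum duality through the diffusion mismatch so as to reach \emph{every} $p$ (rather than only $L_2$) while keeping the constant's dependence only on the admissible data; and in the boundary step, exhibiting a single strictly convex weight $Q$ that simultaneously makes all of the gradient matrices positive definite for large $\Theta$ and keeps each coefficient vector $a(\beta,\Theta)$ inside the admissible cone $\{a_1,\dots,a_{m-1}\ge K,\ a_m=1\}$ of $(V_L)$. Tracking how the constants $L_{a(\beta,\Theta)}$ then feed into the Gronwall step is the technical heart of the argument.
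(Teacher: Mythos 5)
There is a genuine gap in your interior step. You propose to get $\|u_i\|_{p,\Omega_T}$ by duality applied to the weighted sum $z=\sum_j a_j u_j$, and you identify the diffusion-mismatch term $\int_0^T\int_\Omega\sum_i(d_i-d)a_iu_i\,\Delta_t\varphi$ as the one needing care, claiming it is ``handled using the comparability $d\,z\le\sum_j a_jd_ju_j\le(\max_id_i)\,z$ together with the $W^{2,1}_{p'}$ bound on $\varphi$.'' That does not close: since $\Delta_t\varphi$ has no sign, the best this comparability gives is
$\bigl|\int\sum_i(d_i-d)a_iu_i\,\Delta_t\varphi\bigr|\le C\,\|z\|_{p,\Omega_T}\,\|\Delta_t\varphi\|_{p',\Omega_T}\le C'\|z\|_{p,\Omega_T}$ with $C'$ of order $\max_id_i-\min_id_i$, i.e.\ you reproduce on the right-hand side exactly the quantity you are trying to bound, with a constant that is not small. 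In the paper's Theorem \ref{martinthm} the analogous term $\int(\tilde D-D)v\,\Delta_t\varphi$ in (\ref{duality1.2}) is controlled only because one component ($v$) is \emph{a priori} sup-norm bounded, so it pairs an $L_\infty$ function against $\|\Delta_t\varphi\|_{p'}$ from (\ref{phibound}). In Lemma \ref{lpestimate} no such hypothesis is available, and the zeroth-order dilation term $-a(t)z$ cannot substitute for it: it is a damping of order zero and has no bearing on the second-order mismatch. This is precisely the classical obstruction to the duality method for systems with unequal diffusion coefficients, and it is why the paper abandons duality entirely for the $m$-component lemma.

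The paper's actual route is the one you describe in your second half, used alone: both $\|u\|_{p,\Omega_T}$ and $\|u\|_{p,\Gamma_T}$ are extracted from the Lyapunov polynomial (\ref{mpoly}) (the $m$-component extension of (\ref{2poly}), with separate parameters $\theta_1^{\beta_1^2}\cdots\theta_{m-1}^{\beta_{m-1}^2}$ rather than your single $\Theta^{Q(\alpha)}$), following Lemma 5.3 of \cite{V1} and \cite{RefWorks:8}. There the reaction term $I$ is not bounded by a previously established interior $L_p$ estimate; instead $(V_L)$ gives $I\le L_a\int_\Omega\bigl[(\sum_iu_i)^p+(\sum_iu_i)^{p-1}\bigr]\le C(P+1)$ and the whole differential inequality is closed by Gronwall, after which $\sup_{t\le T}\int_\Omega P$ bounds $\|u_i\|_{p,\Omega_T}$ and the integrated boundary dissipation bounds $\|u_i\|_{p,\Gamma_T}$. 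Your proposal is repairable by deleting the duality step and closing the reaction term this way, but as written the interior estimate --- on which your control of $I$ and the stated conclusion $\|u_i\|_{p,\Omega_T}\le C$ both rest --- does not go through.
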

The proof of Lemma \ref{lpestimate} is using the Lyapunov function which is an extension of the polynomial (\ref{2poly}) to $m$ components, i.e., 
 \be\label{mpoly} P = \sum\limits_{|\beta|=  0}^p \frac{p!}{\beta ! ( p- |\beta|) !}\theta_1^{\beta_1^2}\ldots\theta_{m-1}^{\beta_{m-1}^2}u_1^{\beta_1}\ldots u_{m-1}^{\beta_{m-1}} u_m^{p - |\beta|} \ee where $|\beta| = |{\beta_1}|+ \ldots + |\beta_{m-1}|$ and $\beta! = \beta_1! \ldots \beta_{m-1}!$
The estimates are obtained following steps of Lemma 5.3 of \cite{V1} using this polynomial, which is relatively simpler than $H$ defined in \cite{RefWorks:8} and used  in  \cite{V1}. \\
{\bf{Proof of Theorem \ref{global}}:} From Theorem \ref{local}, we already have a componentwise nonnegative, unique, maximal solution of (\ref{grow}). If $T_{\max }= \infty$, then we are done. So, by way of contradiction assume $T_{\max }< \infty$.
 %We claim that the following $L_p$ esimates hold.
  From Lemma $\ref{lpestimate}$ , we have $L_p$ estimates for our solution for all $p\geq 1$ on
$\Omega \times (0, T_{\max})$ and $M \times (0, T_{\max})$. We know from $(V_{Poly})$ that the $F_i$ and $G_i$ are polynomially
bounded above for each $i$. Then proceeding as in the proof of Theorem 3.3 in \cite{RefWorks:1} with the
bounds from Lemma $\ref{lpestimate}$ we have $T_{\max} = \infty$.

\section{Examples}\label{eg}
\subsection* {Example 1} Here, we give an example related to the well known Brusselator. Consider the system
\begin{align}\label{bruss}
u_{1_t}&=d_1 \Delta u_1 \quad & y\in \Omega_t, t>0  \nonumber\\ 
u_{2_t}&=d_2\Delta u_2 \quad & y\in \Omega_t, t>0 \nonumber\\
d_1\frac{\partial u_1}{\partial\eta_t}&=\alpha u_2-u_2^2u_1 \quad & y\in \partial\Omega_t, t>0\\\nonumber
d_2 \frac{\partial u_2}{\partial\eta_t}&=\beta-(\alpha+1)u_2+u_2^2u_1 \quad & y\in \partial\Omega_t, t>0\\\nonumber
u_i(y,0)&=w_i(y) & y\in \overline\Omega_0 \times\lbrace 0\rbrace\nonumber
\end{align}
where $d_1, d_2, \alpha, \beta>0$ and $w$ is sufficiently smooth and componentwise nonnegative. If we define \[ f(u)=\begin{pmatrix} 0\\0\end{pmatrix} \quad \text{and } \quad g(u)=\begin{pmatrix} \alpha u_2-u_2^2u_1\\\beta-(\alpha+1)u_2+u_2^2u_1\end{pmatrix} \]for all $u\in\mathbb{R}_+^2$, then $(V_N)$, $(V_{F})$, $(V_{QP})$ and $(V_{Poly})$ are satisfied with $a_1\geq 1$ and $L_a=\max\lbrace \beta,\alpha\cdot a_1\rbrace$. Therefore, Theorem \ref{global} implies (\ref{bruss}) has a unique, componentwise nonnegative, global solution.

\subsection*{Example 2} We next consider a general reaction mechanism of the form
\[ R_1+ R_2\substack{\longrightarrow\\ \longleftarrow} P_1\] 
where $R_i$ and $P_i$ represent reactant and product species, respectively. If we set $u_i=[R_i]$ for $i=1,2$, and $u_3 = [P_1] $, and let $k_f ,k_r$ be the (nonnegative) forward and reverse reaction rates, respectively, then we can model the process by the application of the law of conservation of mass and the second law of Fick (ﬂow) with the following reaction–diffusion system: 

\begin{align}\label{squares1}
u_{i_t}&=d_i \Delta u_i \quad & y\in \Omega_t, t>0, i=1,2,3 \nonumber\\ \nonumber
d_1\frac{\partial u_1}{\partial\eta_t}&=-k_fu_1u_2+k_ru_3\quad & y\in \partial\Omega_t, t>0\\
d_2 \frac{\partial u_2}{\partial\eta_t}&=-k_fu_1u_2+k_ru_3\quad & y\in \partial\Omega_t, t>0\\\nonumber
d_3 \frac{\partial u_3}{\partial\eta_t}&=k_f u_1u_2-k_ru_3 \quad & y\in \partial\Omega_t, t>0\\\nonumber
u_i(y,0)&=w_i(y) & y\in \overline\Omega_0 \times\lbrace 0\rbrace, i=1,2,3, \nonumber
\end{align}
where $d_i>0$ and the initial data $w$ is sufficiently smooth and componentwise nonnegative. If we define
\[ f(u)=\begin{pmatrix} 0\\0\\0\end{pmatrix} \quad \text{,} \quad g(u)=\begin{pmatrix}-k_fu_1u_2+k_rv_3\\ -k_fu_1u_2+k_rv_3\\k_f u_1u_2-k_rv_3   \end{pmatrix} \]
for all $u\in\mathbb{R}_+^3$, then $(V_N)$, $(V_{F})$, $(V_{QP})$ and $(V_{Poly})$ are satisfied. In addition, $(V_{L1})$ is satisfied with $L_1=0$ since \[ \frac{1}{2}f_1(z)+\frac{1}{2}f_2(z)+f_3(z)=0 \quad \text {and}\quad \frac{1}{2}g_1(z)+\frac{1}{2}g_2(z)+g_3(z)=0\] for all $z\in \mathbb{R}^3_{+}$. Therefore, the hypothesis of Theorems \ref{global} is satisfied. As a result (\ref{squares1}) has a unique, componentwise nonnegative, global solution. 

\subsection*{ Example 3} Finally, we consider a system that satisfies the hypothesis of the Theorem $\ref{global}$, where the boundary reaction vector field does not satisfy a linear intermediate sums condition. Let 

\begin{align}\label{square}
u_{1_t}&=d_1 \Delta u \quad & y\in \Omega_t, t>0  \nonumber\\ \nonumber
u_{2_t}&=d_2\Delta u \quad & y\in \Omega_t, t>0\\
d_1\frac{\partial u_1}{\partial\eta_t}&=\alpha u_1u_2^3-u_1u_2^2 \quad & y\in \partial\Omega_t, t>0\\\nonumber
d_2 \frac{\partial u_2}{\partial\eta_t}&=u_1u_2^2-\beta u_1u_2^6 \quad & y\in \partial\Omega_t, t>0\\\nonumber
u(y,0)&=w(y) & y\in \overline\Omega_0\times\lbrace 0\rbrace \nonumber
\end{align}
where $d_1, d_2, \alpha,\beta>0$ and $w$ is sufficiently smooth and componentwise nonnegative. In this setting 
\[ f(u)=\begin{pmatrix} 0\\0\end{pmatrix} \quad \text{,} \quad g(u)=\begin{pmatrix} \alpha u_1u_2^3-u_1u_2^2\\u_1u_2^2-\beta u_1u_2^6\end{pmatrix} \]
for all $u\in\mathbb{R}_+^2$. It is simple matter to see that $(V_N)$, $(V_{F})$, $(V_{QP})$ and $(V_{Poly})$ are satisfied. Also, if $a\ge 1$ then \[ a f_1(u)+f_2(u)=0 \quad \text {and}\quad ag_1(u)+g_2(u)\le (a\alpha-\beta)u_1(u_2^3-u_2^6)\le \frac{a\alpha}{4}u_1\] for all $u\in \mathbb{R}^2_{+}$. Consequenty, $(V_L)$ is satisfied. Therefore, Theorem \ref{global} implies $(\ref{square})$ has a unique, componentwise nonnegative, global solution.

\end{document}